\providecommand{\U}[1]{\protect\rule{.1in}{.1in}}
\newtheorem{theorem}{Theorem}
\newtheorem{definition}[theorem]{Definition}
\newtheorem{lemma}[theorem]{Lemma}
\newtheorem{proposition}[theorem]{Proposition}
\newtheorem{remark}[theorem]{Remark}
\newenvironment{proof}[1][Proof]{\noindent\textbf{#1.} }{\ \rule{0.5em}{0.5em}}
\newcommand{\R}{\mathbb{R}^d}
\newcommand{\norm}[1]{\left\| #1 \right\|}
\newcommand{\inner}[2]{\left< #1 , #2 \right>}
\newcommand{\Renyi}{\mathop{\mathrm{D}_{\rm{p}}}\nolimits}
\renewcommand{\div}{\operatorname{div}}
\let\originalleft\left
\let\originalright\right
\renewcommand{\left}{\mathopen{}\mathclose\bgroup\originalleft}
\renewcommand{\right}{\aftergroup\egroup\originalright}
\begin{document}

\title{Well-posedness and $L^1-L^p$ Smoothing Effect of the Porous Media Equation under Poincar\'e Inequality}
\author[1,2]{Lukang Sun\thanks{Email: \texttt{lukang.sun@tum.de} }}

\affil[1]{Technical University of Munich, School of Computation, Information and Technology, Department of Mathematics, Munich, Germany}
\affil[2]{Munich Center for Machine Learning, Munich, Germany}
\date{\today}
\maketitle

\begin{abstract}
We study the Cauchy problem for a weighted porous medium equation on $\R$
associated with a Gibbs probability measure $\pi=e^{-V}$. Under a Poincar\'e
inequality for $\pi$ and the convexity assumption on $V$, we prove
well-posedness and uniqueness of non-negative weak solutions with initial data
in $L^1(\R,\pi)$. We also establish an $L^1$--$L^p$ smoothing effect at every
positive time. More precisely, for every admissible $p>1$, we show that the
logarithm of the ratio between the $L^p(\R,\pi)$ norm of the solution and its
conserved $L^1(\R,\pi)$ mass first decays at a super-exponential rate and then
decays exponentially to zero. In particular, even if the initial datum belongs
only to $L^1(\R,\pi)$, the solution belongs to $L^p(\R,\pi)$ for every finite
$p>1$ and every $t>0$.
\end{abstract}

{\noindent\small{\textbf{Keywords:} porous media equation, 
    Poincar\'e inequality, well-posedness, smoothing effect, Hypercontractivity, Cauchy problem, Cauchy-Dirichlet problem, large time asymptotics}}\\
	
	{\noindent\small{\textbf{AMS subject classifications:} 35B09 , 35B30,
    35B51, 35B65, 35K15, 35K20, 35K55}}


\section{Introduction}

We consider the following weighted porous medium equation on $\R$:
\begin{equation}\label{eq:1}
    \partial_t\mu=\mathcal L\mu^{1+\beta},
    \qquad \beta>0,
\end{equation}
where
\[
    \mathcal L
    :=
    \frac{1}{1+\beta}
    \left(
    \Delta-\langle\nabla V,\nabla\rangle
    \right).
\]
Here $V$ is the potential associated with the Gibbs probability measure
$\pi=e^{-V}$, and throughout the paper we assume, for simplicity, that
$V$ is smooth on $\R$. When $V$ is constant, equation \eqref{eq:1} reduces to
the classical porous medium equation.

It is useful to rewrite the equation in terms of the probability density
\[
    \rho:=\mu\pi .
\]
A direct computation shows that $\rho$ satisfies the continuity equation
\begin{equation}\label{eq:6}
    \partial_t\rho
    =
    \nabla\cdot\left(\rho v_t^\beta\right),
\end{equation}
where
\[
    v_t^\beta
    :=
    \frac{1}{\beta}
    \nabla\left(\frac{\rho_t}{\pi}\right)^\beta
    =
    \left(\frac{\rho_t}{\pi}\right)^\beta
    \nabla\log\left(\frac{\rho_t}{\pi}\right),
    \qquad \beta>0.
\]
In the limiting case $\beta=0$, equation \eqref{eq:6} becomes the
Fokker--Planck equation associated with Langevin dynamics, which is the
$L^2$-Wasserstein gradient flow of the Kullback--Leibler divergence, or
Boltzmann relative entropy. More generally, equation \eqref{eq:6} also admits a
gradient-flow interpretation and can be viewed as the gradient flow of an
$\alpha$-divergence; see, for instance, \cite[Chapter 24]{villani2008optimal}.

The goal of this paper is to study the well-posedness and uniqueness of the
Cauchy problem for \eqref{eq:1} with non-negative initial data
$\varsigma\in L^1(\R,\pi)$, and to establish an $L^1$--$L^p$ smoothing effect
for positive times. The main result is the following.
\begin{theorem}\label{thm:f1}
Assume that $V\in C^\infty(\R)$ is convex, namely
\[
    \nabla^2V\geq0,
\]
and that $\pi=e^{-V}\in\mathcal P(\R)$ satisfies the Poincar\'e inequality
\eqref{eq:pc} with constant $\lambda>0$. Let $\beta>0$ and let
\[
    \varsigma\in L^1(\R,\pi),
    \qquad
    \varsigma\geq0.
\]
Then the Cauchy problem
\begin{equation}\label{eq:re3}
    \begin{aligned}
        &\partial_t\mu
        =\mathcal L\mu^{1+\beta}=
        \frac{1}{1+\beta}
        L\mu^{1+\beta},
        \qquad
         L:=\Delta-\langle\nabla V,\nabla\rangle,\\
        &\mu(x,0)=\varsigma(x),
    \end{aligned}
\end{equation}
admits a unique non-negative weak solution in the sense of Definition
\ref{def:weak-solution}. Moreover, the solution satisfies
\[
    \mu\in\Gamma,
\]
where
\[
    \begin{aligned}
        \Gamma:=\{\mu:\;&
        \mu\in C([0,\infty),L^1(\R,\pi))
        \cap L^\infty(\tau,\infty;L^p(\R,\pi)),
        \quad \forall p\in(0,\infty),\ \forall \tau>0,\\
        &
        \partial_t\mu\in L^\infty(\tau,\infty;L^1(\R,\pi)),
        \quad \forall \tau>0,\\
        &
        \|\mu(t)\|_{L^1(\R,\pi)}
        =
        \|\varsigma\|_{L^1(\R,\pi)},
        \quad \forall t\geq0,\\
        &
        \nabla\Psi(\mu)\in L^2(\tau,\infty;L^2(\R,\pi)),
        \quad
        \mathcal L\Psi(\mu)\in L^\infty(\tau,\infty;L^1(\R,\pi)),
        \quad \forall \tau>0
        \}.
    \end{aligned}
\]
If, in addition, $\varsigma$ is locally strictly positive, in the sense that
for every compact set $Z\Subset\R$ there exists $c_Z>0$ such that
\[
    \varsigma(x)\geq c_Z
    \quad\text{for a.e. }x\in Z,
\]
then
\[
    \mu\in C^\infty(\R\times(0,\infty)),
\]
and the equation is satisfied classically for every positive time.

Let
\[
    M:=\|\varsigma\|_{L^1(\R,\pi)}.
\]
If $M=0$, then $\mu\equiv0$. If $M>0$, then for every
$p>1$ satisfying
\[
    p+\beta\geq2,
\]
the quantity
\[
    \Renyi(\mu(t))
    :=
    \log\left(\frac{\|\mu(t)\|_{L^p(\R,\pi)}}{M}\right)
\]
is finite for every $t>0$ and satisfies
\[
    \Renyi(\mu(t))\geq0.
\]
Furthermore, with
\[
    t_0:=
    \inf\left\{
    t\geq0:
    \Renyi(\mu(t))\leq\frac{p-1}{p}
    \right\},
\]
we have
\[
    \Renyi(\mu(t))
    \leq
    \begin{cases}
    -\dfrac{1}{\beta}
    \log\left(
    e^{-\beta \Renyi(\varsigma)}
    +
    \dfrac{2\beta\lambda(p-1)}{(p+\beta)^2}
    M^\beta t
    \right),
    & 0<t<t_0,\\[2ex]
    e^{-\frac{2p\lambda M^\beta(t-t_0)}{(p+\beta)^2}}
    \Renyi(\mu(t_0)),
    & t\geq t_0.
    \end{cases}
\]
Here, if $\varsigma\notin L^p(\R,\pi)$, we use the convention
\[
    \Renyi(\varsigma)=+\infty,
    \qquad
    e^{-\beta\Renyi(\varsigma)}=0.
\]
Consequently, for every $t>0$,
\[
    \mu(t)\in L^p(\R,\pi),
    \qquad
    \forall p\in(1,\infty).
\]
\end{theorem}
 Notably, in the above theorem, we do not require the initial data $\varsigma\in L^{p}(\R,\pi)$ and thus $\Renyi(\varsigma)$ could be $+\infty$. If so, we have 
\begin{equation}
    \Renyi(\mu(t)) \leq \begin{cases}- \frac{\log\left(\frac{2\beta\lambda(p-1)}{(p+\beta)^2}\norm{\varsigma}^\beta_{L^1(\R,\pi)}t\right)}{\beta} & \text { if } t\in (0,t_0),
    \\ e^{-\frac{2p\lambda \norm{\varsigma}^\beta_{L^1(\R,\pi)} (t-t_0)}{(p+\beta)^2}}\Renyi\left(\mu(t_0)\right) & \text { if } t\geq t_0,\end{cases}
\end{equation}
and thus if $t>0$, we always have $\norm{\mu(t)}_{L^p(\R,\pi)}<\infty$. We would like to note that in the first case, where $t\in (0,t_0)$, the rate of decrease is super-exponential, whereas in the second case, when $t\geq t_0$, the rate of decrease is merely exponential. As $t\to\infty$, we observe that $\Renyi(\mu(t))\to 0$; equivalently $\norm{\mu(t)}_{L^p(\R,\pi)}$ will monotonically decrease to $\norm{\mu(t)}_{L^1(\R,\pi)}=\norm{\varsigma}_{L^1(\R,\pi)}$. By applying H\"older's inequality, we conclude that the only possible limit is $\lim_{t\to\infty}\mu(t)\equiv \norm{\varsigma}_{L^1(\R,\pi)}$.

We say that $\pi=e^{-V}$ satisfies a Poincar\'e inequality with constant
$\lambda>0$ if, for every $u\in H^1(\R,\pi)$,
\begin{equation}\label{eq:pc}
    \int_{\R}u^2\,d\pi
    -
    \left(\int_{\R}u\,d\pi\right)^2
    \leq
    \frac{1}{\lambda}
    \int_{\R}|\nabla u|^2\,d\pi .
\end{equation}
The above Poincar\'e inequality plays an important role in the proof of the well-posedness, uniqueness and the $L^1-L^p$ smoothing effect of the equation. It is well-known that, if $\pi$ satisfies the log-Sobolev inequality with constant $\lambda$, then, by a simple linearization procedure, $\pi$ fulfills also the Poincar\'e inequality with constant $\lambda$. For example, on $\mathbb{R}^d$, if $V$ is strongly $\lambda-$convex, that is $\nabla^2V\geq \lambda\mathrm{I}_d$, then $\pi$ satisfies the log-Sobolev inequality with constant $\lambda$. It is well-established that according to the Holley-Stroock perturbation lemma, if $V=\bar{V}+\psi$, here $\bar{V}$ is strongly $\lambda-$convex and $\psi\in L^\infty(\R)$, then $\pi$ will satisfies a log-Sobolev inequality with some constant $\lambda'>0$. Consequently, it also satisfies a Poincar\'e inequality. The Poincar\'e inequality is weaker than the log-Sobolev inequality, for example on $\mathbb{R}$, $\pi(x)=e^{-\|x\|^\beta}/Z_{\beta}$ satisfies a Poincar\'e inequality if and only if $\beta\geq 1$, however a log-Sobolev inequality if and only if $\beta\geq 2$, see \cite[Examples 21.19]{villani2008optimal}.

We next specify the notion of weak solution used throughout the paper. In the
main theorem, we use the normalization
\[
     L:=\Delta-\langle\nabla V,\nabla\rangle,
    \qquad
    \Psi(r):=r^{1+\beta},
\]
so that the equation is written as
\[
    \partial_t\mu=\frac{1}{1+\beta} L\Psi(\mu).
\]

\begin{definition}[Weak solution]\label{def:weak-solution}
Let $T>0$ and let $\varsigma\in L^1(\R,\pi)$ be non-negative. A non-negative
function $\mu$ is called a weak solution of
\[
    \partial_t\mu=\frac{1}{1+\beta} L\Psi(\mu),
    \qquad
    \mu(\cdot,0)=\varsigma,
\]
on $\R\times[0,T]$ if the following conditions hold:

\begin{enumerate}
    \item
    \[
        \mu\in C([0,T],L^1(\R,\pi)).
    \]

    \item
    For every $\tau>0$,
    \[
        \nabla\Psi(\mu)\in L^2(\tau,T;L^2(\R,\pi)).
    \]

    \item
    The initial datum is attained in $L^1(\R,\pi)$, namely
    \[
        \lim_{t\downarrow0}
        \|\mu(t)-\varsigma\|_{L^1(\R,\pi)}=0.
    \]

    \item
    For every $0<t_1<t_2\leq T$ and every
    $\phi\in C_c^\infty(\R\times(0,T])$, one has
    \[
        \begin{aligned}
            &\int_{t_1}^{t_2}\int_{\R}
            \left[
            -\mu\,\partial_t\phi
            +
            \frac{1}{1+\beta}
            \langle\nabla\Psi(\mu),\nabla\phi\rangle
            \right]d\pi dt +
            \int_{\R}\mu(x,t)\phi(x,t)d\pi(x)\Big|_{t_1}^{t_2}
            =
            0.
        \end{aligned}
    \]
\end{enumerate}
\end{definition}

We would like to emphasize that much of the existing literature on porous
medium type equations concerns the well-posedness and regularity of
Cauchy--Dirichlet problems, including generalized porous medium equations; see,
for instance,
\cite{dibenedetto1983continuity,dahlberg1993weak,dibenedetto2012harnack}.
The classical porous medium equation is by now well understood, both from the
viewpoint of well-posedness and from that of long-time asymptotics; we refer to
\cite{carrillo1998exponential,carrillo2000asymptotic,carrillo2014renyi,vazquez2007porous}
and the references therein.

The equation studied in the present paper differs from the classical porous
medium equation because of the presence of the weighted diffusion operator
associated with the Gibbs measure \(\pi=e^{-V}\). To the best of our knowledge,
the well-posedness and long-time behavior of the Cauchy problem \eqref{eq:1}
under only a Poincar\'e inequality have not been previously addressed.

A standard approach to the long-time behavior of nonlinear diffusion equations
is the entropy dissipation method, which often relies on functional inequalities
such as Sobolev, logarithmic Sobolev, or Nash inequalities; see, for example,
\cite{carrillo2001entropy,carrillo2003kinetic,arnold2001convex}. However, the
structure of \eqref{eq:1} does not seem to fit directly into the classes of
equations for which these methods yield the desired \(L^1\)-\(L^p\) smoothing
and decay estimates.

Weighted nonlinear diffusion equations with Wasserstein gradient-flow
structure have also been investigated in other exponent regimes. In particular, \cite{iacobelli2019weighted} studied
weighted ultrafast diffusion equations, corresponding to negative diffusion
exponents. This is different from the present work, which concerns the weighted
porous medium regime \(1+\beta>1\). For fast and very fast diffusion equations,
including equations with singular diffusivities, we refer to
\cite{herrero1985cauchy,esteban1988singular,daskalopoulos2007degenerate,bonforte2010positivity,vazquez2006smoothing}.

\paragraph{Notation.}
Let $\Omega\subset\R$ be a domain. For $p\in(0,\infty)$, we write
\[
    \|f\|_{L^p(\Omega,\pi)}
    :=
    \left(\int_{\Omega}|f|^p\,d\pi\right)^{1/p},
\]
and, for a time interval $(t_1,t_2)$,
\[
    \|f\|_{L^p(\Omega\times(t_1,t_2),\pi)}
    :=
    \left(\int_{t_1}^{t_2}\int_{\Omega}|f|^p\,d\pi dt\right)^{1/p}.
\]
When $f$ is vector-valued, $|f|$ denotes its Euclidean norm.

For non-negative integers $k,k'$ and H\"older exponents
$\delta,\delta'\in(0,1]$, we use the following norms. If
$u:Z\subset\mathbb{R}^m\to\mathbb{R}$, then
\[
    \|u\|_{k+\delta;Z}
    :=
    \sum_{0\leq |\alpha|\leq k}
    \|D^\alpha u\|_{L^\infty(Z)}
    +
    \sum_{|\alpha|=k}
    \sup_{x,y\in Z,\ x\neq y}
    \frac{|D^\alpha u(x)-D^\alpha u(y)|}{|x-y|^\delta}.
\]
If
$u:Z\times Z'\subset\mathbb{R}^m\times\mathbb{R}^{m'}\to\mathbb{R}$, then
\[
    \|u\|_{k+\delta,k'+\delta';Z\times Z'}
    :=
    \sup_{y\in Z'}\|u(\cdot,y)\|_{k+\delta;Z}
    +
    \sup_{x\in Z}\|u(x,\cdot)\|_{k'+\delta';Z'}.
\]
Here $\alpha\in\mathbb{N}_0^m$ is a multi-index and
$|\alpha|=\alpha_1+\cdots+\alpha_m$. We write
\[
    u\in C^{k+\delta,k'+\delta'}(\Omega\times\Omega')
\]
whenever
\[
    \|u\|_{k+\delta,k'+\delta';\Omega\times\Omega'}<\infty.
\]

For simplicity, throughout the derivations we often work with the rescaled
equation
\[
    \partial_t\mu=L\mu^{1+\beta},
    \qquad
    L:=\Delta-\langle\nabla V,\nabla\rangle,
\]
unless explicitly stated otherwise. This is equivalent to equation
\eqref{eq:1} after a rescaling of the time variable.

\section{Main Derivations}
This section is dedicated to proving the main theorem. For the well-posedness and uniqueness of the Cauchy problem \eqref{eq:1}, we build upon the ideas presented in \cite{vazquez2007porous}, but with several important differences that we will discuss here. In \cite{vazquez2007porous}, the Barenblatt solution (the fundamental solution) is utilized as a barrier function to derive key estimates for the solution of the classical porous media equation, which are instrumental in demonstrating that the equation is uniformly parabolic. In our situation, however, there is no direct fundamental solution available to serve as a barrier function; therefore, we develop a new barrier function, as detailed in Lemma \ref{lem:re17}. Another difference lies in the Aronson-Bénilan estimate used in \cite[Proposition 9.8]{vazquez2007porous}, which shows that the solution to the classical porous media equation is $L^\infty(\R)$ for $t>0$. Due to the presence of the potential $V$ in our equation \eqref{eq:1}, the Aronson-Bénilan type estimate can only provide a local upper bound for 
$\mu$, which is insufficient for proving the well-posedness and uniqueness of our Cauchy problem. Fortunately, we can utilize the novel arguments presented in Theorem \ref{thm:4} to address this challenge.

To establish the well-posedness of the Cauchy problem with non-negative initial data in 
$L^1(\R,\pi)$, our strategy is to first demonstrate the well-posedness of the Cauchy-Dirichlet problem, and then use its solutions to approximate the solution of the Cauchy problem. The remainder of this paper is structured along these lines.

\subsection{Preliminary Lemmas}
We first derive the following lemma by direct calculation.
\begin{lemma}\label{lem:re64}
Assume that $\mu>0$ is a classical solution of
\begin{equation}\label{eq:p9}
    \partial_t\mu=L\mu^{1+\beta},
    \qquad
    L:=\Delta-\inner{\nabla V}{\nabla},
\end{equation}
on $\R\times(0,T)$. Let
\[
    \nu:=\frac{\beta+1}{\beta}\mu^\beta .
\]
Then $\nu$ satisfies
\begin{equation}\label{eq:g36}
    \partial_t\nu=\beta\nu L\nu+\norm{\nabla\nu}^2 .
\end{equation}
\end{lemma}

\begin{proof}
Since
\[
    \mu^\beta=\frac{\beta}{\beta+1}\nu,
\]
we have
\[
    \mu^{1+\beta}
    =
    \left(\frac{\beta\nu}{\beta+1}\right)^{\frac{\beta+1}{\beta}} .
\]
By the chain rule,
\begin{equation}\label{eq:p9a}
    \partial_t\nu
    =
    (\beta+1)\mu^{\beta-1}\partial_t\mu
    =
    (\beta+1)\mu^{\beta-1}
    L\left(\frac{\beta\nu}{\beta+1}\right)^{\frac{\beta+1}{\beta}} .
\end{equation}
Moreover,
\[
    \nabla\left(\frac{\beta\nu}{\beta+1}\right)^{\frac{\beta+1}{\beta}}
    =
    \left(\frac{\beta\nu}{\beta+1}\right)^{\frac{1}{\beta}}\nabla\nu
    =
    \mu\nabla\nu,
\]
and
\[
    \Delta\left(\frac{\beta\nu}{\beta+1}\right)^{\frac{\beta+1}{\beta}}
    =
    \mu\Delta\nu
    +
    \frac{\mu}{\beta\nu}\norm{\nabla\nu}^2 .
\]
Therefore, using $L=\Delta-\inner{\nabla V}{\nabla}$, we obtain
\[
    L\left(\frac{\beta\nu}{\beta+1}\right)^{\frac{\beta+1}{\beta}}
    =
    \mu L\nu+\frac{\mu}{\beta\nu}\norm{\nabla\nu}^2 .
\]
Inserting this identity into \eqref{eq:p9a} gives
\[
    \partial_t\nu
    =
    (\beta+1)\mu^\beta L\nu
    +
    \frac{(\beta+1)\mu^\beta}{\beta\nu}\norm{\nabla\nu}^2 .
\]
Since
\[
    (\beta+1)\mu^\beta=\beta\nu,
\]
we conclude that
\[
    \partial_t\nu
    =
    \beta\nu L\nu+\norm{\nabla\nu}^2 .
\]
This proves \eqref{eq:g36}.
\end{proof}

Next, we derive a lower bound for $\nu$.
\begin{lemma}\label{lem:re17}
    Assume that $\Omega$ is an open set in $\R$. Let $\nu>0$ be smooth and satisfy equation \eqref{eq:g36} in $\Omega\times (0,T]$ with
    $\nu(x,0)\geq c>0,\forall x\in B_R(x_0)$, where $B_R(x_0)\subset \Omega$. Then we can find some constants $\epsilon,\tau>0$ only depending on $c,\beta,R,d,x_0,V$, such that 
    \[
        \begin{aligned}
            &\nu(x,t)>
            \frac{\epsilon(R^2-\norm{x-x_0}^2)}{t+\tau},
            \quad\forall (x,t)\in B_{R}(x_0)\times [0,T],\text{ or, equivalently,}\\
            &\mu(x,t)>
            \Big(\frac{\beta\epsilon(R^2-\norm{x-x_0}^2)}
            {(1+\beta)(t+\tau)}\Big)^{\frac{1}{\beta}},
            \quad\forall (x,t)\in B_{R}(x_0)\times [0,T].
        \end{aligned}
    \]
\end{lemma}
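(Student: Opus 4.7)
The plan is to construct the explicit barrier
\begin{equation*}
\phi(x,t) := \frac{\epsilon(R^2 - \norm{x-x_0}^2)}{t+\tau}
\end{equation*}
on $\bar B_R(x_0) \times [0,T]$, verify that $\phi$ is a strict classical subsolution of the PDE $\partial_t\nu = \beta\nu L\nu + \normsq{\nabla\nu}$, compare it to $\nu$ on the parabolic boundary of the cylinder, and then invoke a first-touching-point argument to conclude $\nu \geq \phi$ inside.

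The subsolution check is a direct differentiation and factorization:
\begin{equation*}
\partial_t\phi - \beta\phi L\phi - \normsq{\nabla\phi} = -\frac{\epsilon}{(t+\tau)^2}\Big[(R^2 - \norm{x-x_0}^2)\bigl(1 + 2\epsilon\beta(-d + \inner{\nabla V}{x-x_0})\bigr) + 4\epsilon\norm{x-x_0}^2\Big].
\end{equation*}
Since $\norm{\nabla V}$ is bounded on $\bar B_R(x_0)$ by some $M=M(R,x_0,V)$, taking $\epsilon \leq 1/(4\beta(d+MR))$ makes the factor $(1+2\epsilon\beta(-d+\inner{\nabla V}{x-x_0}))$ at least $1/2$, and the resulting bracket is then strictly positive throughout $\bar B_R(x_0)$ (the two summands cannot vanish simultaneously). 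Thus $\phi$ is a strict subsolution on $\bar B_R(x_0)\times(0,T]$. For the parabolic boundary data, on the lateral face $\phi=0\leq\nu$ by the hypothesis $\nu>0$, while setting $\tau := \epsilon R^2/c$ yields $\phi(x,0)\leq \epsilon R^2/\tau = c \leq \nu(x,0)$ on $\bar B_R(x_0)$.

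For the comparison I would argue by contradiction. If $\nu<\phi$ somewhere in the cylinder, then by continuity and the boundary data above there exist a first time $t^*>0$ and an interior point $x^*\in B_R(x_0)$ at which $w:=\nu-\phi$ first vanishes. At $(x^*,t^*)$ one has $\nu=\phi>0$, $\nabla\nu=\nabla\phi$, $\Delta\nu\geq\Delta\phi$ (hence $L\nu\geq L\phi$), and $\partial_t\nu\leq\partial_t\phi$. Combining the equation for $\nu$ with the strict subsolution inequality then gives
\begin{equation*}
\partial_t\nu = \beta\nu L\nu + \normsq{\nabla\nu} = \beta\phi L\nu + \normsq{\nabla\phi} \geq \beta\phi L\phi + \normsq{\nabla\phi} > \partial_t\phi,
\end{equation*}
contradicting $\partial_t\nu\leq\partial_t\phi$. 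Hence $\nu\geq\phi$ on $\bar B_R(x_0)\times[0,T]$, and the corresponding lower bound for $\mu$ follows from $\mu=(\beta\nu/(1+\beta))^{1/\beta}$.

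The main obstacle is the quasilinear, degenerate character of the PDE: the leading coefficient $\beta\nu$ becomes small where $\nu$ is small, so a linearized weak maximum principle for $w$ would degenerate along the lateral face $\{\phi=0\}$. The first-touching-point argument above circumvents this because the strict inequality $\phi=0<\nu$ on the lateral boundary forces the touch point to be interior, keeping $\phi(x^*,t^*)>0$ and thereby legitimizing the step $\beta\phi L\nu\geq\beta\phi L\phi$. A related care-point is ensuring that the subsolution inequality is strict uniformly up to $\partial B_R(x_0)$; the extra additive term $4\epsilon\norm{x-x_0}^2$ in the factorization is precisely what secures this, since it is strictly positive exactly where the first summand $(R^2-\norm{x-x_0}^2)$ degenerates.
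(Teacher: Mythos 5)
Your argument is correct and uses exactly the same barrier and the same algebraic factorization of the subsolution defect as the paper; the genuine difference is in how the two proofs handle the degeneracy of the quasilinear operator at the lateral boundary where the barrier vanishes. The paper lifts the barrier by a positive constant, taking $\delta_\epsilon(x)=\epsilon(R^2-\norm{x}^2+\zeta)$ with $\zeta:=\min_{\partial B_R\times[0,T]}\nu>0$, so that both $\nu$ and $p_\epsilon$ stay bounded away from zero on the whole closed cylinder; this makes the operator uniformly parabolic and allows the paper to invoke a standard off-the-shelf comparison principle for quasilinear equations. You instead keep the barrier touching zero on $\partial B_R(x_0)$ and carry out the comparison by hand with a first-touching-point argument, observing that the strict separation $\phi=0<\nu$ on the lateral face forces any contact point to be interior, where $\phi>0$ legitimizes the step $\beta\phi L\nu\ge\beta\phi L\phi$. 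Your route is more self-contained (no external comparison theorem needed) and cleanly isolates what makes the degeneracy harmless; the paper's route is shorter at the cost of an extra parameter $\zeta$ and a quoted reference.

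Two small points to tighten. First, with $\tau:=\epsilon R^2/c$ you only get $w(x_0,0)\ge 0$, not $>0$, so the claim that the first touching time $t^*$ is strictly positive is not immediate; take instead, say, $\tau:=2\epsilon R^2/c$ so that $\phi(\cdot,0)\le c/2<\nu(\cdot,0)$ on $\bar B_R(x_0)$, which gives strict separation on all of the parabolic boundary. Second, the lemma asserts a strict inequality $\nu>\phi$ inside the cylinder, whereas the contradiction argument as written yields $\nu\ge\phi$; the same enlargement of $\tau$ (or replacing $\epsilon$ by a slightly smaller value at the end) upgrades the conclusion to strict. Both are cosmetic and do not affect the structure of the argument.
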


\begin{proof}
Without loss of generality, we assume $x_0=0$. Define
\[
    \mathfrak{L}u:=\partial_tu-\beta uLu,
    \qquad
    L=\Delta-\inner{\nabla V}{\nabla}.
\]
Since $\nu$ satisfies \eqref{eq:g36}, we have
\[
    \mathfrak{L}\nu
    =
    \partial_t\nu-\beta\nu L\nu
    =
    \norm{\nabla\nu}^2
    \geq 0.
\]

Let
\[
    p_\epsilon(x,t):=\frac{\delta_\epsilon(x)}{t+\tau},
    \qquad
    \delta_\epsilon(x):=\epsilon(R^2-\norm{x}^2).
\]
Then
\[
    \begin{aligned}
        \mathfrak{L}p_\epsilon(x,t)
        &=
        -\frac{\delta_\epsilon(x)}{(t+\tau)^2}
        -\beta
        \frac{\delta_\epsilon(x)L\delta_\epsilon(x)}{(t+\tau)^2}  \\
        &=
        -\frac{\delta_\epsilon(x)}{(t+\tau)^2}
        \Big(1+\beta L\delta_\epsilon(x)\Big).
    \end{aligned}
\]
Moreover,
\[
    L\delta_\epsilon(x)
    =
    \epsilon\Big(-2d+2\inner{\nabla V(x)}{x}\Big).
\]
Since $V$ is smooth on $\overline{B_R(0)}$, the quantity
\[
    M_R:=
    \sup_{x\in B_R(0)}
    \left|-2d+2\inner{\nabla V(x)}{x}\right|
\]
is finite. We choose $\epsilon>0$ small enough such that
\[
    1-\beta\epsilon M_R>0.
\]
Then
\[
    1+\beta L\delta_\epsilon(x)>0,
    \qquad \forall x\in B_R(0),
\]
and therefore
\[
    \mathfrak{L}p_\epsilon(x,t)\leq 0,
    \qquad \forall (x,t)\in B_R(0)\times [0,T].
\]

Next, choose $\tau>0$ large enough so that
\[
    \frac{\epsilon R^2}{\tau}<c.
\]
Then
\[
    p_\epsilon(x,0)
    =
    \frac{\epsilon(R^2-\norm{x}^2)}{\tau}
    <c
    \leq \nu(x,0),
    \qquad \forall x\in B_R(0).
\]
On the lateral boundary $\partial B_R(0)\times[0,T]$, we have
\[
    p_\epsilon(x,t)=0<\nu(x,t),
\]
because $\nu>0$.

We are left with verifying that we can indeed use the comparison principle~(see \cite[Section 3.1]{vazquez2007porous}). We write $\mathfrak{L}$ in divergence form as
\[
    \begin{aligned}
        \mathfrak{L}u
        &=\partial_tu-\beta uLu\\
        &=\partial_tu-\beta u\Delta u+\beta u\inner{\nabla V}{\nabla u}\\
        &=\partial_tu-\beta\div(u\nabla u)
        +\beta\norm{\nabla u}^2
        +\beta u\inner{\nabla V}{\nabla u}.
    \end{aligned}
\]
Equivalently,
\[
    \mathfrak{L}u
    =
    \partial_tu-\sum_{i=1}^d\partial_i a_i(x,t,u,\nabla u)
    -b(x,t,u,\nabla u),
\]
where
\[
    a_i(x,t,u,p_1,\ldots,p_d):=\beta u p_i,
\]
and
\[
    b(x,t,u,p_1,\ldots,p_d):=
    -\beta\sum_{i=1}^d p_i^2
    -\beta u\inner{\nabla V}{\boldsymbol{p}},
    \qquad
    \boldsymbol{p}:=(p_1,\ldots,p_d)^\top .
\]
The coefficients $a_i$ and $b$ are smooth in their arguments on every set where
$u$ is bounded above and bounded away from zero. Moreover, $\nu$ is smooth,
bounded, and strictly positive on $B_R(0)\times[0,T]$, while $p_\epsilon$ is
smooth, bounded, and strictly positive in the interior of $B_R(0)\times[0,T]$.
Since $p_\epsilon=0$ on the lateral boundary and $\nu>0$ there, any first contact
point between $\nu$ and $p_\epsilon$ must occur in the interior, where
$p_\epsilon>0$. Hence the operator is locally uniformly parabolic at such a
contact point, and the standard comparison principle applies. Thus,
\[
    \nu(x,t)>p_\epsilon(x,t)
    =
    \frac{\epsilon(R^2-\norm{x}^2)}{t+\tau},
    \qquad \forall (x,t)\in B_R(0)\times[0,T].
\]

For general $x_0$, we instead take
\[
    \delta_\epsilon(x):=\epsilon(R^2-\norm{x-x_0}^2).
\]
Then
\[
    L\delta_\epsilon(x)
    =
    \epsilon\Big(-2d+2\inner{\nabla V(x)}{x-x_0}\Big),
\]
and the same argument applies with
\[
    M_{R,x_0}:=
    \sup_{x\in B_R(x_0)}
    \left|-2d+2\inner{\nabla V(x)}{x-x_0}\right|.
\]
Finally, since
\[
    \nu=\frac{\beta+1}{\beta}\mu^\beta,
\]
the lower bound for $\mu$ follows immediately from the lower bound for $\nu$.
\end{proof}

\subsection{Cauchy-Dirichlet Problem}
We first solve the Cauchy-Dirichlet problem with smooth positive bounded initial-boundary data.

\begin{proposition}\label{prop:pp4}
For small $\epsilon_1,\epsilon_2>0$ with $\epsilon_1<1/\epsilon_2$, the following Cauchy-Dirichlet problem has a unique classical solution with values in $[\epsilon_1,1/\epsilon_2]$:
    \begin{equation}\label{eq:g73}
    \begin{aligned}
        &\partial_t\mu= L\mu^{\beta+1},\quad L:=\Delta-\inner{\nabla V}{\nabla},\\
        &\mu(x,0)=\varsigma_R(x),\quad \forall x\in B_R(0),\\
        &\mu(x,t)=\epsilon_1,\quad \forall (x,t)\in \partial B_R(0)\times [0,T],
    \end{aligned}
\end{equation}
where $\varsigma_R\in \mathcal{C}^\infty(\overline{B_R(0)})$,
$\varsigma_R(x)=\epsilon_1$ for all $x\in\partial B_R(0)$, and
$\varsigma_R(x)\in [\epsilon_1,1/\epsilon_2]$ for all $x\in B_R(0)$.
\end{proposition}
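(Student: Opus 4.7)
The plan is to reduce \eqref{eq:g73} to a classical quasilinear uniformly parabolic Cauchy-Dirichlet problem on the cylinder $B_R(0)\times[0,T]$ by truncating the nonlinearity outside the anticipated range $[\epsilon_1,1/\epsilon_2]$, invoking standard existence theory, and then recovering a solution of the original equation via the comparison principle applied to the constant stationary solutions $\epsilon_1$ and $1/\epsilon_2$.

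Concretely, I would choose a smooth function $\tilde{\Psi}:\mathbb{R}\to\mathbb{R}$ which agrees with $\Psi(s)=s^{\beta+1}$ on $[\epsilon_1/2,2/\epsilon_2]$ and whose derivative satisfies $0<c_1\leq \tilde{\Psi}'(s)\leq c_2<\infty$ uniformly on $\mathbb{R}$. Expanding
\begin{equation*}
L\tilde{\Psi}(\mu)=\tilde{\Psi}'(\mu)\Delta\mu+\tilde{\Psi}''(\mu)\norm{\nabla\mu}^2-\tilde{\Psi}'(\mu)\inner{\nabla V}{\nabla\mu},
\end{equation*}
the truncated equation $\partial_t\mu=L\tilde{\Psi}(\mu)$ becomes a quasilinear parabolic equation in $B_R(0)\times(0,T]$ with smooth coefficients that are uniformly bounded and uniformly positive in the principal part. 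Classical Ladyzhenskaya-Solonnikov-Uralceva theory for such equations (cf.\ \cite[Section 3.1]{vazquez2007porous}) then produces a solution $\mu\in \mathcal{C}^{2+\alpha,1+\alpha/2}_{\mathrm{loc}}(B_R(0)\times(0,T])$ which attains the initial-boundary data continuously, with interior smoothness obtained by bootstrapping thanks to the smoothness of $\tilde{\Psi}$, $V$ and $\varsigma_R$.

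With such a $\mu$ at hand, I would apply the parabolic comparison principle using the constants $\epsilon_1$ and $1/\epsilon_2$ as sub- and super-solutions: both satisfy $L\tilde{\Psi}(c)=0$ trivially, and by hypothesis $\epsilon_1\leq \varsigma_R\leq 1/\epsilon_2$ on $B_R(0)$, $\mu=\epsilon_1$ on $\partial B_R(0)\times[0,T]$. Hence $\epsilon_1\leq \mu(x,t)\leq 1/\epsilon_2$ throughout $B_R(0)\times[0,T]$, and on this range $\tilde{\Psi}(\mu)=\mu^{\beta+1}$, so $\mu$ actually solves \eqref{eq:g73}. Uniqueness inside the class of classical solutions valued in $[\epsilon_1,1/\epsilon_2]$ follows from the same comparison argument applied to the difference of two such solutions, since the common truncated equation is uniformly parabolic on this range.

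The main obstacle I anticipate is the limited compatibility of the data at the corner $\partial B_R(0)\times\{0\}$: the hypothesis only provides the zeroth-order matching $\varsigma_R=\epsilon_1$ on $\partial B_R(0)$, and higher-order corner compatibility such as $L\varsigma_R^{\beta+1}|_{\partial B_R(0)}=0$ will typically fail, so global $\mathcal{C}^{2,1}$ regularity up to the corner cannot be expected. However this does not affect the claim, because uniform parabolicity on the truncated problem still yields full interior $\mathcal{C}^{\infty}$ regularity on $B_R(0)\times(0,T]$ together with continuity up to the parabolic boundary, which is precisely what is meant by a classical solution here. The other delicate point is to verify that the truncation $\tilde{\Psi}$ preserves both sub/super-solution status and smoothness in a neighbourhood of the endpoints $\epsilon_1$ and $1/\epsilon_2$; this is the reason for keeping the truncation strictly outside the slightly enlarged range $[\epsilon_1/2,2/\epsilon_2]$.
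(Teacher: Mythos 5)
Your existence argument is essentially the paper's: truncate the degenerate nonlinearity to obtain a uniformly parabolic problem, invoke classical quasilinear parabolic theory for a solution, and then use the comparison principle with the constant barriers $\epsilon_1$ and $1/\epsilon_2$ to confine the solution to the range on which the truncation is inactive, so that the truncated solution solves \eqref{eq:g73}. (The paper truncates the conductivity $T_\epsilon(s)\approx(1+\beta)s^\beta$ and keeps the divergence form $\partial_t\mu=\div(T_\epsilon(\mu)\nabla\mu)-T_\epsilon(\mu)\inner{\nabla V}{\nabla\mu}$, whereas you truncate $\Psi$ itself; this is a cosmetic difference.) Where you genuinely diverge is uniqueness. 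You argue via the comparison principle for the common, uniformly parabolic truncated equation, which is correct and transparent here because both candidate solutions take values in $[\epsilon_1,1/\epsilon_2]$ where uniform parabolicity holds. The paper instead uses a duality-type argument with the test function
\begin{equation*}
\eta(x,t)=\int_t^T\big(\mu_1^{\beta+1}(x,s)-\mu_2^{\beta+1}(x,s)\big)\,ds,
\end{equation*}
producing an identity in which two manifestly nonnegative integrals sum to zero, hence $\mu_1=\mu_2$. The paper's choice buys portability: the same test-function computation is reused verbatim in Proposition~\ref{prop:re11}, where the boundary data is $0$ and the equation degenerates at the boundary, so your comparison-based route would no longer apply as stated. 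Finally, your remark about the lack of higher-order corner compatibility at $\partial B_R(0)\times\{0\}$ and the resulting need to interpret ``classical'' as interior regularity plus continuity up to the parabolic boundary is a fair and careful observation that the paper does not address explicitly.
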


\begin{proof}
We first prove existence. Choose a smooth function
$T_\epsilon:\mathbb{R}\to (0,\infty)$ such that
\[
    0<\underline{a}\leq T_\epsilon(s)\leq \overline{a}<\infty,
    \qquad \forall s\in\mathbb{R},
\]
and
\[
    T_\epsilon(s)=(1+\beta)s^\beta,
    \qquad \forall s\in [\epsilon_1,1/\epsilon_2].
\]
Consider the uniformly parabolic problem
\[
    \begin{aligned}
        &\partial_t\mu=
        \div\Big(T_\epsilon(\mu)\nabla\mu\Big)
        -T_\epsilon(\mu)\inner{\nabla V}{\nabla\mu},\\
        &\mu(x,0)=\varsigma_R(x),\quad \forall x\in B_R(0),\\
        &\mu(x,t)=\epsilon_1,\quad \forall (x,t)\in \partial B_R(0)\times [0,T].
    \end{aligned}
\]
Since $T_\epsilon$ is smooth, positive, and bounded above and below away from
zero, this is a uniformly parabolic quasilinear problem. By the classical theory
of quasilinear parabolic equations~(see \cite[Section 3.1]{vazquez2007porous}),
it admits a classical solution.

The constant functions $\epsilon_1$ and $1/\epsilon_2$ are respectively a
subsolution and a supersolution of the above truncated problem. Since
\[
    \epsilon_1\leq \varsigma_R(x)\leq 1/\epsilon_2
    \quad\text{in }B_R(0),
\]
and the lateral boundary value is $\epsilon_1$, the comparison principle gives
\[
    \epsilon_1\leq \mu(x,t)\leq 1/\epsilon_2,
    \qquad \forall (x,t)\in B_R(0)\times[0,T].
\]
Therefore the solution never enters the region where the coefficient was
modified. Hence
\[
    T_\epsilon(\mu)=(1+\beta)\mu^\beta
    \qquad\text{on }B_R(0)\times[0,T].
\]
Consequently,
\[
    \begin{aligned}
        \partial_t\mu
        &=
        \div\Big((1+\beta)\mu^\beta\nabla\mu\Big)
        -(1+\beta)\mu^\beta\inner{\nabla V}{\nabla\mu}\\
        &=
        \Delta\mu^{\beta+1}
        -\inner{\nabla V}{\nabla\mu^{\beta+1}}
        =
        L\mu^{\beta+1}.
    \end{aligned}
\]
Thus $\mu$ solves \eqref{eq:g73} classically.

Next, we prove uniqueness. Let $\mu_1,\mu_2$ be two classical solutions of
\eqref{eq:g73}. Set
\[
    w:=\mu_1^{\beta+1}-\mu_2^{\beta+1}.
\]
Then
\[
    \partial_t(\mu_1-\mu_2)=Lw.
\]
Define
\[
    \eta(x,t):=
    \begin{cases}
        \displaystyle \int_t^T w(x,s)\,ds, & t\in (0,T),\\
        0, & t\geq T.
    \end{cases}
\]
Since $\mu_1=\mu_2=\epsilon_1$ on $\partial B_R(0)\times[0,T]$, we have
$w=0$ and hence $\eta=0$ on $\partial B_R(0)\times[0,T]$. Moreover,
$\eta(x,T)=0$ and $\mu_1(x,0)-\mu_2(x,0)=0$.

Multiplying $\partial_t(\mu_1-\mu_2)=Lw$ by $\eta$ and integrating by parts
with respect to $d\pi$ and $t$, we obtain
\begin{equation}\label{eq:g22}
    \int_0^T\int_{B_R(0)}
    \inner{\nabla w}{\nabla\eta}
    -
    (\mu_1-\mu_2)\partial_t\eta
    \,d\pi(x)dt
    =0.
\end{equation}
Since
\[
    \partial_t\eta(x,t)=-w(x,t),
    \qquad
    \nabla\eta(x,t)=\int_t^T\nabla w(x,s)\,ds,
\]
equation \eqref{eq:g22} becomes
\[
    \int_0^T\int_{B_R(0)}
    (\mu_1-\mu_2)w\,d\pi(x)dt
    +
    \int_0^T\int_{B_R(0)}
    \inner{\nabla w(x,t)}{\int_t^T\nabla w(x,s)\,ds}
    d\pi(x)dt
    =0.
\]
For the second term, we compute
\[
    \begin{aligned}
        &\int_0^T\int_{B_R(0)}
        \inner{\nabla w(x,t)}{\int_t^T\nabla w(x,s)\,ds}
        d\pi(x)dt\\
        &\qquad=
        \frac12
        \int_{B_R(0)}
        \norm{\int_0^T\nabla w(x,t)\,dt}^2
        d\pi(x).
    \end{aligned}
\]
Therefore,
\[
    \int_0^T\int_{B_R(0)}
    (\mu_1-\mu_2)
    \left(\mu_1^{\beta+1}-\mu_2^{\beta+1}\right)
    d\pi(x)dt
    +
    \frac12
    \int_{B_R(0)}
    \norm{\int_0^T\nabla
    \left(\mu_1^{\beta+1}-\mu_2^{\beta+1}\right)dt}^2
    d\pi(x)
    =0.
\]
Both terms are non-negative. Hence
\[
    (\mu_1-\mu_2)
    \left(\mu_1^{\beta+1}-\mu_2^{\beta+1}\right)=0
    \quad\text{a.e. in }B_R(0)\times(0,T).
\]
Since $r\mapsto r^{\beta+1}$ is strictly increasing on $[0,\infty)$, we conclude
that
\[
    \mu_1=\mu_2
    \quad\text{in }B_R(0)\times(0,T).
\]
Thus the classical solution is unique.
\end{proof}

We also note that the solution obtained above is smooth in both space and time.
Indeed, spatial smoothness follows from Theorem \ref{thm:apx7} and the
Schauder estimate in Theorem \ref{thm:apx8}. To apply Theorem
\ref{thm:apx8}, set
\[
    \tilde{\mu}:=\mu^{\beta+1}.
\]
Since $\mu$ is bounded above and below away from zero, the equation can be
rewritten as a uniformly parabolic equation for $\tilde{\mu}$ with smooth
coefficients. Hence the Schauder interior estimates imply higher spatial
regularity. Smoothness in the time variable then follows by differentiating the
equation repeatedly with respect to time and applying the same interior
regularity argument.

Based on the above proposition, we show that the Cauchy-Dirichlet problem with zero boundary condition also has a unique classical solution in the interior.

\begin{proposition}\label{prop:re11}
For small $\epsilon_2>0$, the following Cauchy-Dirichlet problem has a unique non-negative solution $\mu$:
    \begin{equation}\label{eq:g78}
    \begin{aligned}
        &\partial_t\mu= L\mu^{\beta+1},\quad L:=\Delta-\inner{\nabla V}{\nabla},\\
        &\mu(x,0)=\varsigma_R(x),\quad \forall x\in B_R(0),\\
        &\mu(x,t)=0,\quad \forall (x,t)\in \partial B_R(0)\times [0,T],
    \end{aligned}
\end{equation}
where $\varsigma_R\in \mathcal{C}^{\infty}(\overline{B_R(0)})$,
$\varsigma_R(x)\in (0,1/\epsilon_2]$ for all $x\in B_R(0)$, and
$\varsigma_R(x)=0$ for all $x\in\partial B_R(0)$. Moreover,
\[
    0\leq \mu\leq \frac{1}{\epsilon_2}
    \quad\text{on }B_R(0)\times[0,T],
\]
and
\[
    \mu>0
    \quad\text{in }B_R(0)\times(0,T].
\]
In particular, $\mu$ is smooth in $B_R(0)\times(0,T]$ and satisfies
\eqref{eq:g78} classically there.
\end{proposition}

\begin{proof}
For each $i\geq 1$, set
\[
    a_i:=2^{-i},
    \qquad
    \varsigma_{R,i}:=\varsigma_R+a_i.
\]
By Proposition \ref{prop:pp4}, applied with lower boundary value $a_i$ and
upper bound $1/\epsilon_2+1$, there exists a unique classical solution
$\mu_i$ of
\[
    \begin{aligned}
        &\partial_t\mu_i=L\mu_i^{\beta+1},\\
        &\mu_i(x,0)=\varsigma_{R,i}(x),\quad \forall x\in B_R(0),\\
        &\mu_i(x,t)=a_i,\quad \forall (x,t)\in \partial B_R(0)\times[0,T].
    \end{aligned}
\]
By the comparison principle,
\[
    a_i\leq \mu_i(x,t)\leq \frac{1}{\epsilon_2}+1,
    \qquad \forall (x,t)\in B_R(0)\times[0,T].
\]
Moreover, since
\[
    \varsigma_{R,i+1}\leq \varsigma_{R,i},
    \qquad
    a_{i+1}\leq a_i,
\]
another application of the comparison principle gives
\[
    \mu_{i+1}\leq \mu_i
    \qquad\text{in }B_R(0)\times[0,T].
\]
Hence the pointwise limit
\[
    \mu(x,t):=\lim_{i\to\infty}\mu_i(x,t)
\]
exists and satisfies
\[
    0\leq \mu(x,t)\leq \frac{1}{\epsilon_2}+1.
\]

We next derive an estimate independent of $i$. Let
\[
    \Psi(r):=r^{\beta+1},
    \qquad
    \Phi(r):=\frac{r^{\beta+2}}{\beta+2}.
\]
Since $\mu_i=a_i$ on $\partial B_R(0)\times[0,T]$, the function
\[
    \Psi(\mu_i)-\Psi(a_i)
\]
vanishes on the lateral boundary. Thus it is an admissible test function.
Multiplying the equation by $\Psi(\mu_i)-\Psi(a_i)$ and integrating by parts
with respect to $d\pi$, we obtain
\[
    \begin{aligned}
        &\int_{B_R(0)}
        \left[
        \Phi(\mu_i(x,t))-\Psi(a_i)\mu_i(x,t)
        \right]d\pi(x)\\
        &\quad-
        \int_{B_R(0)}
        \left[
        \Phi(\mu_i(x,0))-\Psi(a_i)\mu_i(x,0)
        \right]d\pi(x)\\
        &=
        -\int_0^t\int_{B_R(0)}
        \norm{\nabla\Psi(\mu_i)}^2d\pi(x)ds .
    \end{aligned}
\]
Since
\[
    0\leq \mu_i\leq \frac{1}{\epsilon_2}+1,
\]
the first two integrals are uniformly bounded independently of $i$. Hence
\[
    \int_0^T\int_{B_R(0)}
    \norm{\nabla\Psi(\mu_i)}^2d\pi(x)dt
    \leq C,
\]
where $C$ is independent of $i$.

Therefore, up to a subsequence,
\[
    \nabla\Psi(\mu_i)\rightharpoonup \kappa
    \quad\text{weakly in }L^2(B_R(0)\times(0,T),\pi).
\]
Since $\mu_i\to\mu$ pointwise and the sequence is uniformly bounded, the
dominated convergence theorem gives
\[
    \Psi(\mu_i)\to \Psi(\mu)
    \quad\text{strongly in }L^2(B_R(0)\times(0,T),\pi).
\]
Thus
\[
    \kappa=\nabla\Psi(\mu)
\]
in the sense of distributions. Moreover,
\[
    \Psi(\mu_i)-\Psi(a_i)\in L^2(0,T;H_0^1(B_R(0))),
\]
and since $\Psi(a_i)\to0$, we obtain
\[
    \Psi(\mu)\in L^2(0,T;H_0^1(B_R(0))).
\]
This gives the zero boundary condition in the natural trace sense.

For any test function
$\phi\in \mathcal{C}_c^\infty(B_R(0)\times[0,T))$, the solution $\mu_i$
satisfies
\[
    \int_{t_1}^{t_2}\int_{B_R(0)}
    \left[
    -\mu_i\partial_t\phi
    +
    \inner{\nabla\Psi(\mu_i)}{\nabla\phi}
    \right]d\pi(x)dt
    +
    \int_{B_R(0)}
    \mu_i\phi\,d\pi(x)\Big|_{t_1}^{t_2}
    =
    0.
\]
Passing to the limit using the strong convergence of $\mu_i$ and the weak
convergence of $\nabla\Psi(\mu_i)$, we get
\[
    \int_{t_1}^{t_2}\int_{B_R(0)}
    \left[
    -\mu\partial_t\phi
    +
    \inner{\nabla\Psi(\mu)}{\nabla\phi}
    \right]d\pi(x)dt
    +
    \int_{B_R(0)}
    \mu\phi\,d\pi(x)\Big|_{t_1}^{t_2}
    =
    0.
\]
Therefore $\mu$ is a weak solution of \eqref{eq:g78}.

We now prove interior positivity and smoothness. Let $Z\Subset B_R(0)$ be a
compact set. Since $\varsigma_R>0$ in $B_R(0)$, there exists $c_Z>0$ such that
\[
    \varsigma_R(x)\geq c_Z,
    \qquad \forall x\in Z.
\]
Applying Lemma \ref{lem:re17} locally and covering $Z$ by finitely many balls
compactly contained in $B_R(0)$, we obtain, for every $0<t_1<t_2\leq T$,
\[
    \mu_i(x,t)\geq c(Z,t_1,t_2)>0,
    \qquad \forall (x,t)\in Z\times[t_1,t_2],
\]
where the constant is independent of $i$. Together with the uniform upper bound,
this makes the equation uniformly parabolic on $Z\times[t_1,t_2]$. Hence, by
Theorem \ref{thm:apx7} and the Schauder estimates in Theorem \ref{thm:apx8},
for every multi-index $k$ there exists $\delta\in(0,1)$ such that
\[
    \norm{D^k\mu_i}_{2+\delta,1+\delta/2;Z\times[t_1,t_2]}
    \leq C,
\]
where $C$ is independent of $i$. Passing to the limit yields
\[
    \mu\in \mathcal{C}^{\infty}(B_R(0)\times(0,T])
\]
and $\mu$ satisfies \eqref{eq:g78} classically in the interior. The same lower
bound also shows that
\[
    \mu>0
    \quad\text{in }B_R(0)\times(0,T].
\]

It remains to prove uniqueness. Let $\mu_1,\mu_2$ be two solutions of
\eqref{eq:g78} with the same initial and boundary data. Set
\[
    w:=\mu_1^{\beta+1}-\mu_2^{\beta+1}.
\]
Then
\[
    \partial_t(\mu_1-\mu_2)=Lw.
\]
Define
\[
    \eta(x,t):=
    \begin{cases}
        \displaystyle \int_t^T w(x,s)\,ds, & t\in(0,T),\\
        0, & t\geq T.
    \end{cases}
\]
Since both solutions have zero boundary data, we have $\eta=0$ on
$\partial B_R(0)\times[0,T]$. Multiplying by $\eta$ and integrating by parts,
we obtain
\[
    \int_0^T\int_{B_R(0)}
    \inner{\nabla w}{\nabla\eta}
    -
    (\mu_1-\mu_2)\partial_t\eta
    \,d\pi(x)dt
    =
    0.
\]
Using
\[
    \partial_t\eta=-w,
    \qquad
    \nabla\eta(x,t)=\int_t^T\nabla w(x,s)\,ds,
\]
we get
\[
    \begin{aligned}
        &\int_0^T\int_{B_R(0)}
        (\mu_1-\mu_2)w\,d\pi(x)dt\\
        &\quad+
        \frac12\int_{B_R(0)}
        \norm{\int_0^T\nabla w(x,t)\,dt}^2d\pi(x)
        =
        0.
    \end{aligned}
\]
Both terms are non-negative. Hence
\[
    (\mu_1-\mu_2)
    \left(\mu_1^{\beta+1}-\mu_2^{\beta+1}\right)=0
    \quad\text{a.e. in }B_R(0)\times(0,T).
\]
Since $r\mapsto r^{\beta+1}$ is strictly increasing on $[0,\infty)$, this implies
\[
    \mu_1=\mu_2
    \quad\text{a.e. in }B_R(0)\times(0,T).
\]
By continuity in the interior, the equality holds everywhere in
$B_R(0)\times(0,T)$. Thus the solution is unique.
\end{proof}

The following theorem pertains to the large-time behavior of 
$\norm{\mu(t)}_{L^p(B_R(0),\pi)}$, which is crucial for demonstrating the well-posedness, uniqueness, and the 
$L^1-L^p$ smoothing effect of the equation.
\begin{theorem}
   \label{thm:4}
Assume that $\pi=e^{-V}\in\mathcal{P}(\R)$ satisfies the Poincar\'e inequality with constant $\lambda>0$, $V$ smooth and $p> 1,\beta> 0,p+\beta \geq 2$. Let
\[
    L:=\Delta-\inner{\nabla V}{\nabla}.
\]
For small $\epsilon_2>0$, let $\mu$ be the classical solution to the Cauchy-Dirichlet problem
\[
    \begin{aligned}
        &\partial_t\mu= \frac{1}{1+\beta}L\mu^{\beta+1}
        =\frac{1}{1+\beta}\left[\Delta\mu^{\beta+1}-\inner{\nabla V}{\nabla \mu^{\beta+1}}\right],\\
        &\mu(x,0)=\varsigma_R(x),\quad \forall x\in B_R(0),\\
        &\mu(x,t)=0,\quad \forall (x,t)\in \partial B_R(0)\times [0,T],
    \end{aligned}
\]
where $\varsigma_R\in \mathcal{C}^\infty(\overline{B_R(0)})$,
$\norm{\varsigma_R}_{L^1(B_R(0),\pi)}=1$,
$\varsigma_R(x)\in (0,1/\epsilon_2]$ for all $x\in B_R(0)$, and
$\varsigma_R(x)=0$ for all $x\in\partial B_R(0)$. Then $K_p(\mu(t))$ is
monotonically non-increasing in time, where
\[
    K_p(\mu(t)):=
    \frac{1}{p-1}\log\left(\int_{B_R(0)}\mu^p(t)d\pi(x)\right).
\]
Moreover, with
\[
    t_0:=\inf\{t\in [0,T]:K_p(\mu(t))\leq 1\},
\]
we have
	\begin{equation}\label{eq:15}
	K_p(\mu(t)) \leq \begin{cases}
    - \dfrac{p\log\left(e^{-\frac{\beta(p-1)}{p}K_p(\varsigma_R)}+\frac{2\beta\lambda(p-1)}{(p+\beta)^2}t\right)}{\beta(p-1)} & \text { if } 0<t<t_0,
    \\[1em]
    e^{-\frac{2p\lambda (t-t_0)}{(p+\beta)^2}}K_p(\mu(t_0)) & \text { if } t\geq t_0.
    \end{cases}
	\end{equation}
If $K_p(\varsigma_R)\leq 1$, then $t_0=0$ and only the second estimate is used.
\end{theorem}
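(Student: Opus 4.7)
The plan is to derive a closed differential inequality for the functional $M_p(t):=\int_{B_R(0)}\mu^p(t)\,d\pi$ and then integrate it separately in the two advertised regimes.

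First I would differentiate $M_p$, use the PDE, and integrate by parts against the self-adjoint operator $L$ (with respect to $\pi$) to arrive at
\[
\frac{d}{dt}M_p(t) \;=\; -\frac{4p(p-1)}{(p+\beta)^2}\int_{B_R(0)} \bigl|\nabla\mu^{(p+\beta)/2}(t)\bigr|^2\,d\pi.
\]
All boundary contributions vanish because $\mu=0$ on $\partial B_R(0)$ combined with $p-1,\beta>0$ kills $\mu^{p-1}\mu^\beta\partial_\nu\mu$ on the boundary. This already gives the monotonicity of $M_p$, and hence of $K_p$. The same integration by parts with $p=1$ shows $\|\mu(t)\|_{L^1(B_R(0),\pi)}$ is preserved and equal to $\|\varsigma_R\|_{L^1(B_R(0),\pi)}=1$; by Jensen's inequality for the probability measure $\pi$, $M_r(t)\geq 1$ for every $r\geq 1$, so in particular $K_p(t)\geq 0$ throughout.

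Next I would apply the Poincar\'e inequality \eqref{eq:pc} to $u:=\mu^{(p+\beta)/2}$, obtaining
\[
\int|\nabla u|^2\,d\pi \;\geq\; \lambda\Bigl(M_{p+\beta} - \bigl(\textstyle\int\mu^{(p+\beta)/2}\,d\pi\bigr)^{\!2}\Bigr),
\]
and convert the right-hand side into an expression in $M_p$ alone. The upper bound $\bigl(\int\mu^{(p+\beta)/2}\,d\pi\bigr)^2\leq M_{p+\beta-1}$ follows from Cauchy--Schwarz applied to $\mu^{1/2}\cdot\mu^{(p+\beta-1)/2}$ together with $\|\mu\|_{L^1(B_R(0),\pi)}=1$; here the hypothesis $p+\beta\geq 2$ is used to keep the exponent $p+\beta-1\geq 1$. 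The $L^r(\pi)$-norm monotonicity for probability $\pi$ then gives $M_{p+\beta-1}\leq M_{p+\beta}^{(p+\beta-1)/(p+\beta)}$ and $M_{p+\beta}\geq M_p^{(p+\beta)/p}$, yielding a differential inequality for $M_p$ whose right-hand side is a difference of two explicit powers of $M_p$.

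Finally I would integrate in each regime. When $K_p>1$, $M_p$ is sufficiently large that the leading $M_p^{(p+\beta)/p}$ term absorbs the lower-order correction, reducing the inequality to $\dot M_p\leq-\tfrac{2p(p-1)\lambda}{(p+\beta)^2}M_p^{(p+\beta)/p}$; the substitution $Z(t):=M_p(t)^{-\beta/p}=\exp(-\beta(p-1)K_p(t)/p)$ turns this into $\dot Z\geq \tfrac{2\beta\lambda(p-1)}{(p+\beta)^2}$, and integrating from $t=0$ and inverting yields the first line of \eqref{eq:15}. When $K_p\leq 1$, $M_p$ is close to $1$ and Taylor-expanding the same inequality around $M_p=1$ gives $\dot K_p\leq-\tfrac{2p\lambda}{(p+\beta)^2}K_p$; Gr\"onwall from $t_0$ supplies the second line. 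The main obstacle I anticipate is the quantitative bookkeeping in this final step: producing the precise constants $\tfrac{2\beta\lambda(p-1)}{(p+\beta)^2}$ and $\tfrac{2p\lambda}{(p+\beta)^2}$ and making the split occur exactly at $K_p=1$. The Cauchy--Schwarz bound on $(\int\mu^{(p+\beta)/2}\,d\pi)^2$ is not always tight enough to absorb the correction $M_p^{(p+\beta-1)/p}$ as merely half of the leading $M_p^{(p+\beta)/p}$ when $K_p$ is only moderately above $1$, so I expect the proof to refine it with a H\"older interpolation between $L^1(\pi)$ and $L^{p+\beta}(\pi)$ at exponent $(p+\beta)/2$, tuned to the full parameter range $p>1,\beta>0,p+\beta\geq 2$ so that both constants fall out cleanly.
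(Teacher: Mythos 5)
Your plan matches the paper's proof step for step: differentiate $M_p$, integrate by parts to obtain the dissipation $-\frac{4p(p-1)}{(p+\beta)^2}\int\normsq{\nabla\mu^{(p+\beta)/2}}\,d\pi$, apply the Poincar\'e inequality to $u=\mu^{(p+\beta)/2}$, estimate $\big(\int\mu^{(p+\beta)/2}d\pi\big)^2$, pass to a closed inequality in $M_p$ via $L^r(\pi)$-norm monotonicity, and integrate with the substitution $Z=M_p^{-\beta/p}$ in the regime $K_p\geq1$ and Gr\"onwall in the regime $K_p\leq1$. Your closing hedge was well-placed and correct: the crude Cauchy--Schwarz bound leads to the too-weak factor $1-M_p^{-1/p}$ (so $1-e^{-(p-1)K_p/p}$, which is not uniformly $\geq\tfrac12$ on $\{K_p\geq1\}$ as $p\downarrow1$), whereas the interpolation of $L^{(p+\beta)/2}(\pi)$ between $L^1(\pi)$ and $L^{p+\beta}(\pi)$ that you anticipate gives $\big(\int\mu^{(p+\beta)/2}d\pi\big)^2\leq M_{p+\beta}^{(p+\beta-2)/(p+\beta-1)}$ --- exactly the paper's bound, phrased there as H\"older with respect to the sub-probability measure $\mu\,d\pi$ --- and then the elementary inequalities $1-e^{-x}\geq\tfrac12$ on $[1,\infty)$ and $1-e^{-x}\geq\tfrac{x}{2}$ on $[0,1]$ produce the claimed constants with the split at $K_p=1$.
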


\begin{proof}
For simplicity, write
\[
    A_p(t):=\int_{B_R(0)}\mu^p(t)d\pi(x).
\]
We first prove that $A_p(t)$ is non-increasing. Since $\mu$ solves
\[
    \partial_t\mu=\frac{1}{1+\beta}L\mu^{1+\beta},
    \qquad
    L=\Delta-\inner{\nabla V}{\nabla},
\]
we have
\[
    \begin{aligned}
        \frac{d}{dt}A_p(t)
        &=p\int_{B_R(0)}\mu^{p-1}(t)\partial_t\mu(t)d\pi(x)\\
        &=\frac{p}{1+\beta}\int_{B_R(0)}
        \mu^{p-1}(t)L\mu^{1+\beta}(t)d\pi(x).
    \end{aligned}
\]
Using integration by parts with respect to $d\pi=e^{-V}dx$, and using that
$\mu=0$ on $\partial B_R(0)$, we get
\[
    \int_{B_R(0)}
    \mu^{p-1}L\mu^{1+\beta}d\pi
    =
    -\int_{B_R(0)}
    \inner{\nabla\mu^{p-1}}{\nabla\mu^{1+\beta}}d\pi.
\]
Therefore,
\[
    \begin{aligned}
        \frac{d}{dt}A_p(t)
        &=
        -\frac{p}{1+\beta}\int_{B_R(0)}
        \inner{\nabla \mu^{p-1}(t)}{\nabla\mu^{1+\beta}(t)}d\pi(x)\\
        &=
        -\frac{p}{1+\beta}(p-1)(1+\beta)
        \int_{B_R(0)}
        \mu^{p+\beta-2}(t)\norm{\nabla\mu(t)}^2d\pi(x)\\
        &=
        -p(p-1)
        \int_{B_R(0)}
        \mu^{p+\beta-2}(t)\norm{\nabla\mu(t)}^2d\pi(x).
    \end{aligned}
\]
Since
\[
    \nabla\mu^{\frac{p+\beta}{2}}
    =
    \frac{p+\beta}{2}\mu^{\frac{p+\beta-2}{2}}\nabla\mu,
\]
we have
\[
    \mu^{p+\beta-2}\norm{\nabla\mu}^2
    =
    \frac{4}{(p+\beta)^2}
    \norm{\nabla\mu^{\frac{p+\beta}{2}}}^2.
\]
Thus
\[
    \frac{d}{dt}A_p(t)
    =
    -\frac{4p(p-1)}{(p+\beta)^2}
    \int_{B_R(0)}
    \norm{\nabla \mu^{\frac{p+\beta}{2}}(t)}^2d\pi(x)
    \leq 0.
\]
Hence $A_p(t)$, and therefore $K_p(\mu(t))$, is non-increasing.

We also need the mass dissipation property. Since $\mu^{\beta+1}\geq0$ in
$B_R(0)$ and $\mu^{\beta+1}=0$ on $\partial B_R(0)$, its outward normal
derivative is non-positive on the boundary. Therefore,
\[
    \begin{aligned}
        \frac{d}{dt}\int_{B_R(0)}\mu(t)d\pi(x)
        &=
        \frac{1}{1+\beta}
        \int_{B_R(0)}L\mu^{1+\beta}(t)d\pi(x)\\
        &=
        \frac{1}{1+\beta}
        \int_{\partial B_R(0)}
        e^{-V}\partial_n\mu^{1+\beta}(t)dS
        \leq 0.
    \end{aligned}
\]
Consequently,
\[
    \int_{B_R(0)}\mu(t)d\pi(x)
    \leq
    \int_{B_R(0)}\varsigma_Rd\pi(x)
    =
    1.
\]

Now set
\[
    q:=p+\beta.
\]
Since $q\geq2$ and $\mu=0$ on $\partial B_R(0)$, the function
$\mu^{q/2}$ has zero trace on $\partial B_R(0)$. Hence its zero extension to
$\R$ belongs to $H^1(\R)$. Applying the Poincar\'e inequality to this zero
extension gives
\begin{equation}\label{eq:re30}
\begin{aligned}
    \int_{B_R(0)}
    \norm{\nabla\mu^{\frac q2}(t)}^2d\pi(x)
    &\geq
    \lambda
    \left[
    \int_{B_R(0)}\mu^q(t)d\pi(x)
    -
    \left(\int_{B_R(0)}\mu^{q/2}(t)d\pi(x)\right)^2
    \right]\\
    &\geq
    \lambda
    \left[
    \int_{B_R(0)}\mu^q(t)d\pi(x)
    -
    \left(\int_{B_R(0)}\mu^q(t)d\pi(x)\right)^{\frac{q-2}{q-1}}
    \right].
\end{aligned}
\end{equation}
We justify the last inequality. Since
\[
    \int_{B_R(0)}\mu(t)d\pi\leq1,
\]
we may apply H\"older's inequality with respect to the sub-probability measure
$\mu(t)d\pi$. If $q>2$, then
\[
    \begin{aligned}
        \int_{B_R(0)}\mu^{q/2}(t)d\pi
        &=
        \int_{B_R(0)}\mu^{(q-2)/2}(t)\mu(t)d\pi\\
        &\leq
        \left(
        \int_{B_R(0)}
        \left(\mu^{(q-2)/2}(t)\right)^{\frac{2(q-1)}{q-2}}
        \mu(t)d\pi
        \right)^{\frac{q-2}{2(q-1)}}\\
        &=
        \left(
        \int_{B_R(0)}\mu^q(t)d\pi
        \right)^{\frac{q-2}{2(q-1)}}.
    \end{aligned}
\]
Squaring both sides gives
\[
    \left(\int_{B_R(0)}\mu^{q/2}(t)d\pi\right)^2
    \leq
    \left(\int_{B_R(0)}\mu^q(t)d\pi\right)^{\frac{q-2}{q-1}}.
\]
When $q=2$, the same estimate reduces to
\[
    \left(\int_{B_R(0)}\mu(t)d\pi\right)^2\leq1,
\]
which follows from mass dissipation.

We now assume that $A_p(t)\geq1$, equivalently $K_p(\mu(t))\geq0$. Define
\[
    B_q(t):=\int_{B_R(0)}\mu^q(t)d\pi.
\]
Since $\pi(B_R(0))\leq1$ and $q>p$, H\"older's inequality gives
\[
    A_p(t)^{q/p}
    =
    \left(\int_{B_R(0)}\mu^p(t)d\pi\right)^{q/p}
    \leq
    \int_{B_R(0)}\mu^q(t)d\pi
    =
    B_q(t).
\]
Moreover, using again the sub-probability measure
\[
    d\nu_t:=\mu(t)d\pi,
\]
we obtain
\[
    \begin{aligned}
        A_p(t)
        &=
        \int_{B_R(0)}\mu^p(t)d\pi\\
        &=
        \int_{B_R(0)}\mu^{p-1}(t)d\nu_t\\
        &=
        \int_{B_R(0)}
        \left(\mu^{q-1}(t)\right)^{\frac{p-1}{q-1}}d\nu_t\\
        &\leq
        \left(
        \int_{B_R(0)}\mu^{q-1}(t)d\nu_t
        \right)^{\frac{p-1}{q-1}}
        \left(
        \int_{B_R(0)}1\,d\nu_t
        \right)^{1-\frac{p-1}{q-1}}\\
        &\leq
        \left(
        \int_{B_R(0)}\mu^q(t)d\pi
        \right)^{\frac{p-1}{q-1}}\\
        &=
        B_q(t)^{\frac{p-1}{q-1}}.
    \end{aligned}
\]
Hence
\[
    A_p(t)^{1/(p-1)}
    \leq
    B_q(t)^{1/(q-1)}.
\]
Equivalently,
\[
    B_q(t)^{-1/(q-1)}
    \leq
    A_p(t)^{-1/(p-1)}.
\]
Therefore,
\[
    1-B_q(t)^{-1/(q-1)}
    \geq
    1-A_p(t)^{-1/(p-1)}.
\]
Since $A_p(t)\geq1$, the factor
\[
    1-A_p(t)^{-1/(p-1)}
\]
is non-negative. Combining this with $B_q(t)\geq A_p(t)^{q/p}$, we get
\[
    \begin{aligned}
        B_q(t)\left(1-B_q(t)^{-\frac{1}{q-1}}\right)
        &\geq
        A_p(t)^{q/p}
        \left(1-A_p(t)^{-\frac{1}{p-1}}\right)\\
        &=
        A_p(t)^{\frac{p+\beta}{p}}
        \left(1-A_p(t)^{-\frac{1}{p-1}}\right).
    \end{aligned}
\]
Substituting this into \eqref{eq:re30}, we obtain
\[
    \int_{B_R(0)}
    \norm{\nabla\mu^{\frac{p+\beta}{2}}(t)}^2d\pi
    \geq
    \lambda
    A_p(t)^{\frac{p+\beta}{p}}
    \left(1-A_p(t)^{-\frac{1}{p-1}}\right).
\]
Returning to the differential identity for $A_p(t)$, we conclude that
\[
     \frac{d}{dt}A_p(t)
     \leq
     -\frac{4p(p-1)\lambda}{(p+\beta)^2}
     A_p(t)^{\frac{p+\beta}{p}}
     \left(1-A_p(t)^{-\frac{1}{p-1}}\right).
\]

Since
\[
    K_p(\mu(t))=\frac{1}{p-1}\log A_p(t),
\]
we have
\[
    A_p(t)=e^{(p-1)K_p(\mu(t))}.
\]
Also,
\[
    \frac{d}{dt}K_p(\mu(t))
    =
    \frac{1}{p-1}\frac{A_p'(t)}{A_p(t)}.
\]
Dividing the previous differential inequality by $(p-1)A_p(t)$ gives
\[
    \frac{d}{dt}K_p(\mu(t))
    \leq
    -\frac{4p\lambda}{(p+\beta)^2}
    A_p(t)^{\beta/p}
    \left(1-A_p(t)^{-1/(p-1)}\right).
\]
Using
\[
    A_p(t)^{\beta/p}
    =
    e^{\frac{\beta(p-1)}{p}K_p(\mu(t))}
\]
and
\[
    A_p(t)^{-1/(p-1)}
    =
    e^{-K_p(\mu(t))},
\]
we obtain
\[
    \frac{d}{dt}K_p(\mu(t))
    \leq
    -\frac{4p\lambda}{(p+\beta)^2}
    e^{\frac{\beta(p-1)}{p}K_p(\mu(t))}
    \left(1-e^{-K_p(\mu(t))}\right),
\]
whenever $K_p(\mu(t))\geq0$.

We now distinguish two regimes. First suppose $K_p(\mu(t))\geq1$. Then
\[
    1-e^{-K_p(\mu(t))}
    \geq
    1-e^{-1}
    \geq
    \frac12.
\]
Therefore, as long as $K_p(\mu(t))\geq1$,
\begin{equation}\label{eq:eq18}
    \frac{d}{dt}K_p(\mu(t))
    \leq
    -\frac{2p\lambda}{(p+\beta)^2}
    e^{\frac{\beta(p-1)}{p}K_p(\mu(t))}.
\end{equation}
Set
\[
    \alpha:=\frac{\beta(p-1)}{p}.
\]
Multiplying \eqref{eq:eq18} by $-\alpha e^{-\alpha K_p(\mu(t))}$ gives
\[
    \frac{d}{dt}e^{-\alpha K_p(\mu(t))}
    =
    -\alpha e^{-\alpha K_p(\mu(t))}
    \frac{d}{dt}K_p(\mu(t))
    \geq
    \frac{2\beta\lambda(p-1)}{(p+\beta)^2}.
\]
Integrating from $0$ to $t<t_0$, we obtain
\[
    e^{-\alpha K_p(\mu(t))}
    \geq
    e^{-\alpha K_p(\varsigma_R)}
    +
    \frac{2\beta\lambda(p-1)}{(p+\beta)^2}t.
\]
Taking logarithms and using $\alpha=\beta(p-1)/p$, we get
\[
    K_p(\mu(t))
    \leq
    -\frac{p}{\beta(p-1)}
    \log\left(
    e^{-\frac{\beta(p-1)}{p}K_p(\varsigma_R)}
    +
    \frac{2\beta\lambda(p-1)}{(p+\beta)^2}t
    \right),
    \qquad 0<t<t_0.
\]

Next suppose $0\leq K_p(\mu(t))\leq1$. Since
\[
    1-e^{-x}\geq\frac{x}{2},
    \qquad 0\leq x\leq1,
\]
and
\[
    e^{\frac{\beta(p-1)}{p}K_p(\mu(t))}\geq1,
\]
we obtain
\begin{equation}\label{eq:eq20}
    \frac{d}{dt}K_p(\mu(t))
    \leq
    -\frac{2p\lambda}{(p+\beta)^2}K_p(\mu(t)).
\end{equation}
Applying Gr\"onwall's lemma from $t_0$ to $t$ gives
\[
    K_p(\mu(t))
    \leq
    e^{-\frac{2p\lambda(t-t_0)}{(p+\beta)^2}}
    K_p(\mu(t_0)).
\]
This estimate holds as long as $K_p(\mu(s))\geq0$ on $[t_0,t]$. If $K_p$
becomes negative at a later time, the same upper bound remains true trivially,
because the right-hand side is non-negative while $K_p(\mu(t))<0$. Therefore
\eqref{eq:15} follows.
\end{proof}

\begin{remark}\label{rmk:re14}
    As a consequence of Theorem \ref{thm:4}, we obtain the uniform estimate
    \[
        K_p(\mu(t))
        \leq
        \max\left\{
        -\frac{p}{\beta(p-1)}
        \log\left(
        \frac{2\beta\lambda(p-1)}{(p+\beta)^2}t
        \right),
        1
        \right\},
        \qquad \forall t>0.
    \]
    In particular, for every $t>0$, $\beta>0$, $p>1$, and $p+\beta\geq 2$,
    there exists a constant $C=C(p,\beta,\lambda,t)$ such that
    \[
        \norm{\mu(t)}_{L^p(B_R(0),\pi)}
        \leq C(p,\beta,\lambda,t).
    \]
    Importantly, this bound is independent of
    $\norm{\varsigma_R}_{L^p(B_R(0),\pi)}$.
\end{remark}

\begin{remark}
    Theorem \ref{thm:4} also applies to the classical porous medium equation
    on $B_R(0)$. Indeed, one may choose a smooth probability measure
    $\pi=e^{-V}$ such that $V$ is constant on $B_R(0)$ and such that $\pi$
    satisfies a Poincar\'e inequality with some constant $\lambda>0$, possibly
    depending on $R$. On $B_R(0)$, the weighted operator
    $\Delta-\inner{\nabla V}{\nabla}$ then coincides with the usual Laplacian.
    Hence the conclusion of Theorem \ref{thm:4} gives the corresponding
    estimate for the classical porous medium equation on $B_R(0)$.
\end{remark}

\subsection{Cauchy Problem}
We will now utilize the solutions obtained from the Cauchy-Dirichlet problem to construct a solution for the Cauchy problem.

\subsubsection{Cauchy Problem with Smooth Positive Bounded Initial Data}
We first consider the Cauchy problem with smooth positive bounded initial data.

\begin{proposition}\label{prop:re19}
Assume that $\pi=e^{-V}\in\mathcal{P}(\R)$ and that $V$ is smooth. For small
$\epsilon_2>0$, the following Cauchy problem has a unique bounded positive
classical solution in $\R\times(0,T]$:
\begin{equation}\label{eq:p43}
    \begin{aligned}
        &\partial_t\mu= L\mu^{\beta+1},\quad L:=\Delta-\inner{\nabla V}{\nabla},\\
        &\mu(x,0)=\varsigma(x),\quad \forall x\in \R,
    \end{aligned}
\end{equation}
where $\varsigma\in \mathcal{C}^{\infty}(\R)$ and
\[
    0<\varsigma(x)\leq \frac{1}{\epsilon_2},
    \qquad \forall x\in\R.
\]
Moreover,
\[
    0<\mu(x,t)\leq \frac{1}{\epsilon_2},
    \qquad (x,t)\in\R\times(0,T],
\]
and
\[
    \mu\in \mathcal{C}([0,T],L^1(\R,\pi)).
\]
In addition,
\[
    \norm{\mu(t)}_{L^1(\R,\pi)}
\]
is non-increasing in time,
\[
    \mu\in L^\infty(0,T;L^p(\R,\pi)),
    \qquad \forall p\in(0,\infty),
\]
and
\[
    \nabla\Psi(\mu)\in L^2(0,T;L^2(\R,\pi)),
    \qquad
    \Psi(\mu):=\mu^{1+\beta}.
\]
Finally, for every $\tau>0$,
\[
    \partial_t\mu\in L^\infty(\tau,T;L^1(\R,\pi)),
    \qquad
    L\Psi(\mu)\in L^\infty(\tau,T;L^1(\R,\pi)).
\]
\end{proposition}

\begin{proof}
Let $R_i:=2^i$. Choose functions
$\chi_i\in\mathcal{C}^{\infty}(\overline{B_{R_i}(0)})$ such that
\[
    0\leq \chi_i\leq 1,\qquad
    \chi_i=1 \text{ on }B_{R_i-1}(0),\qquad
    \chi_i>0 \text{ in }B_{R_i}(0),
\]
and
\[
    \chi_i=0 \text{ on }\partial B_{R_i}(0).
\]
Set
\[
    \varsigma_i:=\varsigma\chi_i.
\]
Then
\[
    0<\varsigma_i(x)\leq \frac{1}{\epsilon_2}
    \quad\text{in }B_{R_i}(0),
    \qquad
    \varsigma_i=0
    \quad\text{on }\partial B_{R_i}(0).
\]
By Proposition \ref{prop:re11}, there exists a unique solution $\mu_i$ of
\[
    \begin{aligned}
        &\partial_t\mu_i=L\mu_i^{\beta+1},
        \qquad \text{in }B_{R_i}(0)\times(0,T],\\
        &\mu_i(x,0)=\varsigma_i(x),
        \qquad x\in B_{R_i}(0),\\
        &\mu_i(x,t)=0,
        \qquad (x,t)\in\partial B_{R_i}(0)\times[0,T].
    \end{aligned}
\]
The comparison principle gives
\[
    0\leq \mu_i(x,t)\leq \frac{1}{\epsilon_2},
    \qquad (x,t)\in B_{R_i}(0)\times[0,T].
\]

Let $Z\Subset\R$ and $0<t_1<t_2\leq T$. For all sufficiently large $i$, we have
$Z\Subset B_{R_i}(0)$ and $\chi_i=1$ on a neighbourhood of $Z$. Since
$\varsigma>0$ and is continuous, there exists $c_Z>0$, independent of $i$, such
that
\[
    \varsigma_i(x)\geq c_Z,
    \qquad x\in Z.
\]
Applying Lemma \ref{lem:re17} locally and covering $Z$ by finitely many balls,
we obtain
\[
    \mu_i(x,t)\geq c(Z,t_1,t_2)>0,
    \qquad (x,t)\in Z\times[t_1,t_2],
\]
with a constant independent of $i$. Together with the uniform upper bound, this
makes the equation uniformly parabolic on $Z\times[t_1,t_2]$, uniformly in $i$.
Therefore, by Theorem \ref{thm:apx7} and the Schauder estimate in Theorem
\ref{thm:apx8}, for every multi-index $k$ there exist $\delta\in(0,1)$ and a
constant $C$, independent of $i$, such that
\[
    \norm{D^k\mu_i}_{2+\delta,1+\delta/2;Z\times[t_1,t_2]}
    \leq C.
\]
By a diagonal argument, there exists a function $\mu$ such that, along a
subsequence,
\[
    \mu_i\to\mu
    \quad\text{locally smoothly in }\R\times(0,T].
\]
Hence $\mu$ is positive and smooth in $\R\times(0,T]$, satisfies
\[
    \partial_t\mu=L\mu^{\beta+1},
\]
and obeys
\[
    0<\mu(x,t)\leq \frac{1}{\epsilon_2}.
\]

We now show that $\mu$ attains the initial datum. Fix $Z\Subset\R$ and
$t_2>0$. For all sufficiently large $i$, we have $\chi_i=1$ on a neighbourhood
of $Z$, and hence
\[
    \varsigma_i=\varsigma
    \quad\text{on }Z.
\]
Moreover, Lemma \ref{lem:re17} gives a positive lower bound for $\mu_i$ on
$Z\times[0,t_2]$, independent of $i$. Thus the equation is uniformly parabolic
there up to the initial time. The local Schauder estimates up to the initial
time~(see for example \cite[Theorem 1996]{lieberman1996second}) imply local compactness on $Z\times[0,t_2]$. Passing to the limit gives
\[
    \mu(x,0)=\lim_{i\to\infty}\mu_i(x,0)
    =
    \lim_{i\to\infty}\varsigma_i(x)
    =
    \varsigma(x),
\]
locally uniformly in $x$. Therefore
\[
    \mu(x,t)\to\varsigma(x)
    \quad\text{for every }x\in\R
    \quad\text{as }t\downarrow0.
\]
Since
\[
    0\leq\mu(x,t)\leq \frac{1}{\epsilon_2},
    \qquad
    0\leq\varsigma(x)\leq \frac{1}{\epsilon_2},
\]
and $\pi(\R)=1$, dominated convergence gives
\[
    \lim_{t\downarrow0}
    \norm{\mu(t)-\varsigma}_{L^1(\R,\pi)}=0.
\]
For $t>0$, the same dominated convergence argument gives continuity in
$L^1(\R,\pi)$. Thus
\[
    \mu\in\mathcal{C}([0,T],L^1(\R,\pi)).
\]

Next, we derive the energy estimate. For each $\mu_i$, testing the equation
with $\Psi(\mu_i)=\mu_i^{\beta+1}$ gives
\[
   \int_{B_{R_i}(0)}\Phi(\mu_i(x,t))d\pi(x)
   -
   \int_{B_{R_i}(0)}\Phi(\varsigma_i(x))d\pi(x)
   =
   -\int_0^t\int_{B_{R_i}(0)}
   \norm{\nabla\Psi(\mu_i)}^2d\pi(x)ds,
\]
where
\[
    \Phi(r):=\frac{r^{\beta+2}}{\beta+2}.
\]
Since
\[
    0\leq \varsigma_i\leq \frac{1}{\epsilon_2},
\]
we obtain
\[
    \int_0^t\int_{B_{R_i}(0)}
    \norm{\nabla\Psi(\mu_i)}^2d\pi(x)ds
    \leq C(t,\beta,\epsilon_2),
    \qquad 0<t\leq T,
\]
where $C$ is independent of $i$. Passing to the limit and applying Fatou's
lemma gives
\[
    \int_0^t\int_{\R}
    \norm{\nabla\Psi(\mu)}^2d\pi(x)ds
    \leq C(t,\beta,\epsilon_2),
    \qquad 0<t\leq T.
\]
Hence
\[
    \nabla\Psi(\mu)\in L^2(0,T;L^2(\R,\pi)).
\]

Since
\[
    0\leq \mu\leq \frac{1}{\epsilon_2}
\]
and $\pi(\R)=1$, we immediately have
\[
    \mu\in L^\infty(0,T;L^p(\R,\pi)),
    \qquad \forall p\in(0,\infty).
\]

The $L^1$ norm is non-increasing. Indeed, applying Proposition \ref{prop:g6}
with $\mu_1=\mu$ and $\mu_2=0$ yields
\[
    \norm{\mu(t_2)}_{L^1(\R,\pi)}
    \leq
    \norm{\mu(t_1)}_{L^1(\R,\pi)},
    \qquad 0\leq t_1\leq t_2\leq T.
\]

We now estimate $\partial_t\mu$. Fix $t>0$ and $h>0$, and choose $\eta>1$ such
that
\[
    \eta^\beta t=t+h.
\]
Define
\[
    \tilde{\mu}(x,s):=\eta\mu(x,\eta^\beta s).
\]
Then $\tilde{\mu}$ solves the same equation with initial datum $\eta\varsigma$.
By Proposition \ref{prop:g6},
\[
    \norm{\tilde{\mu}(t)-\mu(t)}_{L^1(\R,\pi)}
    \leq
    \norm{\eta\varsigma-\varsigma}_{L^1(\R,\pi)}
    =
    (\eta-1)\norm{\varsigma}_{L^1(\R,\pi)}.
\]
Since $\tilde{\mu}(t)=\eta\mu(t+h)$, we get
\[
    \norm{\mu(t+h)-\mu(t)}_{L^1(\R,\pi)}
    \leq
    2(\eta-1)\norm{\varsigma}_{L^1(\R,\pi)}.
\]
Moreover,
\[
    \eta=\left(1+\frac{h}{t}\right)^{1/\beta},
\]
so
\[
    \lim_{h\downarrow0}\frac{\eta-1}{h}
    =
    \frac{1}{\beta t}.
\]
Thus
\[
    \limsup_{h\downarrow0}
    \frac{\norm{\mu(t+h)-\mu(t)}_{L^1(\R,\pi)}}{h}
    \leq
    \frac{2\norm{\varsigma}_{L^1(\R,\pi)}}{\beta t}.
\]
Since $\mu$ is classical for positive times, Fatou's lemma gives
\[
    \norm{\partial_t\mu(t)}_{L^1(\R,\pi)}
    \leq
    \frac{2\norm{\varsigma}_{L^1(\R,\pi)}}{\beta t}.
\]
Because $\partial_t\mu=L\Psi(\mu)$, we also have
\[
    L\Psi(\mu)\in L^\infty(\tau,T;L^1(\R,\pi)),
    \qquad \forall \tau>0.
\]

Finally, uniqueness follows from Proposition \ref{prop:g6}. If $\mu_1$ and
$\mu_2$ are two solutions with the same initial datum, then
\[
    \norm{\mu_1(t)-\mu_2(t)}_{L^1(\R,\pi)}
    \leq
    \norm{\mu_1(0)-\mu_2(0)}_{L^1(\R,\pi)}
    =
    0.
\]
Hence $\mu_1=\mu_2$, and the proof is complete.
\end{proof}
\begin{proposition}\label{prop:re19-poincare}
Assume, in addition to the hypotheses of Proposition \ref{prop:re19}, that
$\pi=e^{-V}$ satisfies the Poincar\'e inequality with constant $\lambda>0$.
Let $\mu$ be the solution of \eqref{eq:p43} constructed in Proposition
\ref{prop:re19}. Then, for every $p\in(0,\infty)$ and every $\tau>0$,
\[
    \mu\in L^\infty(\tau,T;L^p(\R,\pi)),
\]
with a bound depending only on
\[
    p,\beta,\lambda,\tau,T,\norm{\varsigma}_{L^1(\R,\pi)},
\]
and not on $\epsilon_2$ or on $\norm{\varsigma}_{L^p(\R,\pi)}$.

Moreover,
\[
    \nabla\Psi(\mu)\in L^2(\tau,T;L^2(\R,\pi)),
\]
with
\[
    \int_\tau^T\int_{\R}
    \norm{\nabla\Psi(\mu)}^2d\pi(x)dt
    \leq
    C(\tau,T,\beta,\lambda,\norm{\varsigma}_{L^1(\R,\pi)}),
\]
where the constant is independent of $\epsilon_2$.
\end{proposition}

\begin{proof}
Let $\mu_i$ be the Cauchy--Dirichlet approximations used in the proof of
Proposition \ref{prop:re19}. Set
\[
    M_i:=\norm{\varsigma_i}_{L^1(B_{R_i}(0),\pi)}.
\]
Then
\[
    M_i\leq \norm{\varsigma}_{L^1(\R,\pi)}.
\]
Applying Theorem \ref{thm:4} to the normalized solution
\[
    \bar{\mu}_i(x,t):=
    M_i^{-1}\mu_i(x,M_i^{-\beta}t),
\]
and then scaling back, gives, for every $p>1$ with $p+\beta\geq2$,
\[
    \norm{\mu_i(t)}_{L^p(B_{R_i}(0),\pi)}
    \leq
    C(p,\beta,\lambda,t,\norm{\varsigma}_{L^1(\R,\pi)}),
    \qquad t>0,
\]
where the constant is independent of $i$ and $\epsilon_2$. Passing to the limit
and applying Fatou's lemma yields
\[
    \norm{\mu(t)}_{L^p(\R,\pi)}
    \leq
    C(p,\beta,\lambda,t,\norm{\varsigma}_{L^1(\R,\pi)}).
\]
Taking the supremum over $t\in[\tau,T]$ gives
\[
    \mu\in L^\infty(\tau,T;L^p(\R,\pi))
\]
for all $p>1$ satisfying $p+\beta\geq2$.

If $p>1$ but $p+\beta<2$, choose $q>p$ such that $q+\beta\geq2$. Since
$\pi(\R)=1$, H\"older's inequality gives
\[
    \norm{\mu(t)}_{L^p(\R,\pi)}
    \leq
    \norm{\mu(t)}_{L^q(\R,\pi)}.
\]
Thus the same conclusion holds for every $p>1$. For $0<p\leq1$, using
$\pi(\R)=1$ and the non-increasing property of the $L^1$ norm, we have
\[
    \norm{\mu(t)}_{L^p(\R,\pi)}
    \leq
    \norm{\mu(t)}_{L^1(\R,\pi)}
    \leq
    \norm{\varsigma}_{L^1(\R,\pi)}.
\]
Therefore
\[
    \mu\in L^\infty(\tau,T;L^p(\R,\pi)),
    \qquad \forall p\in(0,\infty),\ \forall\tau>0.
\]

It remains to prove the gradient estimate independent of $\epsilon_2$. For each
$\mu_i$, the energy identity gives
\[
   \begin{aligned}
       &\int_{B_{R_i}(0)}\Phi(\mu_i(x,T))d\pi(x)
       -\int_{B_{R_i}(0)}\Phi(\mu_i(x,\tau))d\pi(x)\\
       &\qquad=
       -\int_\tau^T\int_{B_{R_i}(0)}
       \norm{\nabla\Psi(\mu_i)}^2d\pi(x)dt,
   \end{aligned}
\]
where
\[
    \Phi(r):=\frac{r^{\beta+2}}{\beta+2}.
\]
Hence
\[
    \int_\tau^T\int_{B_{R_i}(0)}
    \norm{\nabla\Psi(\mu_i)}^2d\pi(x)dt
    \leq
    \int_{B_{R_i}(0)}\Phi(\mu_i(x,\tau))d\pi(x).
\]
Using the $L^{\beta+2}$ estimate already obtained at time $\tau$, we get
\[
    \int_\tau^T\int_{B_{R_i}(0)}
    \norm{\nabla\Psi(\mu_i)}^2d\pi(x)dt
    \leq
    C(\tau,T,\beta,\lambda,\norm{\varsigma}_{L^1(\R,\pi)}),
\]
with a constant independent of $i$ and $\epsilon_2$. Passing to the limit and
using Fatou's lemma yields
\[
    \int_\tau^T\int_{\R}
    \norm{\nabla\Psi(\mu)}^2d\pi(x)dt
    \leq
    C(\tau,T,\beta,\lambda,\norm{\varsigma}_{L^1(\R,\pi)}).
\]
This proves the proposition.
\end{proof}

The following proposition establishes an $L^1(\R,\pi)$ contraction property.

\begin{proposition}\label{prop:g6}
Let
\[
    L:=\Delta-\inner{\nabla V}{\nabla},
    \qquad
    \Psi(r):=r^{1+\beta}.
\]
Let $\mu_1,\mu_2$ be two non-negative classical solutions of
\[
    \partial_t\mu=L\Psi(\mu)
\]
on $\R\times(0,T]$, such that
\[
    \mu_i\in \mathcal{C}([0,T],L^1(\R,\pi)),
    \qquad i=1,2,
\]
and
\[
    \int_0^T\int_{\R}\norm{\nabla\Psi(\mu_i)}^2d\pi(x)dt<\infty,
    \qquad i=1,2.
\]
Then, for every $0\leq t_1\leq t_2\leq T$,
\[
    \norm{(\mu_1(t_2)-\mu_2(t_2))^+}_{L^1(\R,\pi)}
    \leq
    \norm{(\mu_1(t_1)-\mu_2(t_1))^+}_{L^1(\R,\pi)}.
\]
Consequently,
\[
    \norm{\mu_1(t_2)-\mu_2(t_2)}_{L^1(\R,\pi)}
    \leq
    \norm{\mu_1(t_1)-\mu_2(t_1)}_{L^1(\R,\pi)}.
\]
In particular, if $\mu_1(0)=\mu_2(0)$ in $L^1(\R,\pi)$, then
$\mu_1=\mu_2$ on $\R\times[0,T]$.
\end{proposition}

\begin{proof}
Set
\[
    y:=\mu_1-\mu_2,
    \qquad
    w:=\Psi(\mu_1)-\Psi(\mu_2).
\]
Since $\Psi(r)=r^{1+\beta}$ is increasing on $[0,\infty)$, we have
\[
    w>0 \Longleftrightarrow y>0,
    \qquad
    w=0 \Longleftrightarrow y=0.
\]
Subtracting the two equations gives
\[
    \partial_t y=Lw.
\]

Let $\theta_\delta\in C^1(\mathbb{R})$ be such that
\[
    0\leq \theta_\delta\leq 1,
    \qquad
    \theta_\delta(s)=0 \text{ for } s\leq0,
    \qquad
    \theta_\delta'(s)\geq0,
\]
and
\[
    \theta_\delta(s)\to \operatorname{sign}_0^+(s)
    \quad\text{as }\delta\downarrow0.
\]
Here
\[
    \operatorname{sign}_0^+(s)=
    \begin{cases}
        1, & s>0,\\
        0, & s\leq0.
    \end{cases}
\]
Let $\zeta_1\in C_c^\infty(\R)$ satisfy
\[
    0\leq \zeta_1\leq1,
    \qquad
    \zeta_1(x)=1 \text{ if } \norm{x}\leq1,
    \qquad
    \zeta_1(x)=0 \text{ if } \norm{x}\geq2,
\]
and define
\[
    \zeta_n(x):=\zeta_1(x/n).
\]
Then
\[
    0\leq \zeta_n\leq1,
    \qquad
    \zeta_n\uparrow1,
    \qquad
    \norm{\nabla\zeta_n}_{L^\infty}\leq \frac{C}{n}.
\]

Fix $0<t_1<t_2\leq T$. Multiplying
\[
    \partial_t y=Lw
\]
by $\theta_\delta(w)\zeta_n$ and integrating over
$\R\times(t_1,t_2)$ gives
\[
    \int_{t_1}^{t_2}\int_{\R}
    \partial_t y\,\theta_\delta(w)\zeta_n\,d\pi dt
    =
    \int_{t_1}^{t_2}\int_{\R}
    Lw\,\theta_\delta(w)\zeta_n\,d\pi dt.
\]
Using integration by parts with respect to $d\pi=e^{-V}dx$, we obtain
\[
    \begin{aligned}
        \int_{t_1}^{t_2}\int_{\R}
        Lw\,\theta_\delta(w)\zeta_n\,d\pi dt
        &=
        -\int_{t_1}^{t_2}\int_{\R}
        \inner{\nabla w}{\nabla(\theta_\delta(w)\zeta_n)}d\pi dt\\
        &=
        -\int_{t_1}^{t_2}\int_{\R}
        \theta_\delta'(w)\norm{\nabla w}^2\zeta_n\,d\pi dt\\
        &\quad
        -\int_{t_1}^{t_2}\int_{\R}
        \theta_\delta(w)\inner{\nabla w}{\nabla\zeta_n}d\pi dt.
    \end{aligned}
\]
Since $\theta_\delta'\geq0$, the first term on the right-hand side is
non-positive. Hence
\[
    \int_{t_1}^{t_2}\int_{\R}
    \partial_t y\,\theta_\delta(w)\zeta_n\,d\pi dt
    \leq
    -\int_{t_1}^{t_2}\int_{\R}
    \theta_\delta(w)\inner{\nabla w}{\nabla\zeta_n}d\pi dt.
\]
Letting $\delta\downarrow0$, and using the fact that $w$ and $y$ have the same
sign, we get
\[
    \int_{t_1}^{t_2}\int_{\R}
    \partial_t y\,\operatorname{sign}_0^+(y)\zeta_n\,d\pi dt
    \leq
    -\int_{t_1}^{t_2}\int_{\R}
    \operatorname{sign}_0^+(w)
    \inner{\nabla w}{\nabla\zeta_n}d\pi dt.
\]
Since
\[
    \partial_t y\,\operatorname{sign}_0^+(y)
    =
    \partial_t y^+,
\]
the left-hand side becomes
\[
    \int_{\R}y^+(x,t_2)\zeta_n(x)d\pi(x)
    -
    \int_{\R}y^+(x,t_1)\zeta_n(x)d\pi(x).
\]
Therefore,
\[
    \begin{aligned}
        &\int_{\R}y^+(x,t_2)\zeta_n(x)d\pi(x)
        -
        \int_{\R}y^+(x,t_1)\zeta_n(x)d\pi(x)\\
        &\leq
        \left|
        \int_{t_1}^{t_2}\int_{\R}
        \operatorname{sign}_0^+(w)
        \inner{\nabla w}{\nabla\zeta_n}d\pi dt
        \right|.
    \end{aligned}
\]
Using $0\leq \operatorname{sign}_0^+(w)\leq1$ and Cauchy's inequality, we have
\[
    \begin{aligned}
        &\left|
        \int_{t_1}^{t_2}\int_{\R}
        \operatorname{sign}_0^+(w)
        \inner{\nabla w}{\nabla\zeta_n}d\pi dt
        \right|\\
        &\leq
        \left(
        \int_{t_1}^{t_2}\int_{\R}\norm{\nabla\zeta_n}^2d\pi dt
        \right)^{1/2}
        \left(
        \int_{t_1}^{t_2}\int_{\R}\norm{\nabla w}^2d\pi dt
        \right)^{1/2}.
    \end{aligned}
\]
Moreover,
\[
    \nabla w
    =
    \nabla\Psi(\mu_1)-\nabla\Psi(\mu_2),
\]
and therefore
\[
    \int_0^T\int_{\R}\norm{\nabla w}^2d\pi dt
    \leq
    2\sum_{i=1}^2
    \int_0^T\int_{\R}\norm{\nabla\Psi(\mu_i)}^2d\pi dt
    <\infty.
\]
Also,
\[
    \int_{t_1}^{t_2}\int_{\R}\norm{\nabla\zeta_n}^2d\pi dt
    \leq
    \frac{C(t_2-t_1)}{n^2}.
\]
Hence
\[
    \int_{\R}y^+(x,t_2)\zeta_n(x)d\pi(x)
    \leq
    \int_{\R}y^+(x,t_1)\zeta_n(x)d\pi(x)
    +
    \frac{C}{n}.
\]
Letting $n\to\infty$ and using monotone convergence gives
\[
    \int_{\R}y^+(x,t_2)d\pi(x)
    \leq
    \int_{\R}y^+(x,t_1)d\pi(x).
\]
Thus
\[
    \norm{(\mu_1(t_2)-\mu_2(t_2))^+}_{L^1(\R,\pi)}
    \leq
    \norm{(\mu_1(t_1)-\mu_2(t_1))^+}_{L^1(\R,\pi)}.
\]
By the $L^1$-continuity of $\mu_1,\mu_2$, the same estimate also holds for
$t_1=0$.

Exchanging the roles of $\mu_1$ and $\mu_2$ gives
\[
    \norm{(\mu_2(t_2)-\mu_1(t_2))^+}_{L^1(\R,\pi)}
    \leq
    \norm{(\mu_2(t_1)-\mu_1(t_1))^+}_{L^1(\R,\pi)}.
\]
Adding the two inequalities yields
\[
    \norm{\mu_1(t_2)-\mu_2(t_2)}_{L^1(\R,\pi)}
    \leq
    \norm{\mu_1(t_1)-\mu_2(t_1)}_{L^1(\R,\pi)}.
\]
If $\mu_1(0)=\mu_2(0)$ in $L^1(\R,\pi)$, then the right-hand side is zero
with $t_1=0$, and therefore $\mu_1=\mu_2$ for all $t\in[0,T]$.
\end{proof}
\begin{remark}\label{rmk:g6-weak}
The conclusion of Proposition \ref{prop:g6} also holds for weak solutions in
the natural energy class. More precisely, let $\mu_1,\mu_2$ be two
non-negative weak solutions of
\[
    \partial_t\mu=L\Psi(\mu),
    \qquad
    \Psi(r)=r^{1+\beta},
\]
such that
\[
    \mu_i\in C([0,T],L^1(\R,\pi))\cap W^{1,1}_{\mathrm{loc}}(0,T;L^1_{\mathrm{loc}}(\mathbb R^d,\pi)),
    \qquad i=1,2,
\]
and
\[
    \Psi(\mu_i)\in L_{\mathrm{loc}}^2(0,T;H^1(\mathbb{R}^d,\pi)),\qquad i=1,2,
\]
Then, for every $0\leq t_1\leq t_2\leq T$, one has
\[
    \norm{(\mu_1(t_2)-\mu_2(t_2))^+}_{L^1(\R,\pi)}
    \leq
    \norm{(\mu_1(t_1)-\mu_2(t_1))^+}_{L^1(\R,\pi)}.
\]
Consequently,
\[
    \norm{\mu_1(t_2)-\mu_2(t_2)}_{L^1(\R,\pi)}
    \leq
    \norm{\mu_1(t_1)-\mu_2(t_1)}_{L^1(\R,\pi)}.
\]
The proof relies on a Kato-type inequality, which we postpone to Appendix~\ref{apx:weak12}.

\end{remark}
\begin{remark}\label{rmk:p11}
  If $\mu$ is a classical solution of \eqref{eq:p43} satisfying the assumptions of
Proposition \ref{prop:g6}, then applying Proposition \ref{prop:g6} with
$\mu_1=\mu$ and $\mu_2=0$ shows that
$\norm{\mu(t)}_{L^1(\R,\pi)}$ is non-increasing in time.
\end{remark}

\subsubsection{Aronson-Bénilan Estimate}
Under the additional convexity assumption on the potential $V$, we obtain an
Aronson--Bénilan type estimate analogous to the one for the classical porous
medium equation.
\begin{proposition}
Assume in addition that $V$ is convex, namely
\[
    \nabla^2V\geq0.
\]
Let $\mu$ be the positive classical solution from Proposition \ref{prop:re19},
and define
\[
    \nu:=\frac{\beta+1}{\beta}\mu^\beta.
\]
Then
\[
    L\nu\geq -\frac{1}{\beta t},
    \qquad \text{in } \R\times(0,T],
\]
where
\[
    L:=\Delta-\inner{\nabla V}{\nabla}.
\]
\end{proposition}

\begin{proof}
We prove the estimate first for smooth strictly positive approximating
solutions, for which all applications of the comparison principle are justified.

We begin by constructing smooth convex approximations of $V$ with bounded
derivatives. Let $V^*$ denote the Legendre transform of $V$, and define
\[
    W_i(x):=\sup_{\norm{p}\leq i}\{\inner{p}{x}-V^*(p)\}.
\]
Then $W_i$ is convex and globally Lipschitz, with
\[
    \norm{\nabla W_i}_{L^\infty(\R)}\leq i
\]
in the weak sense. Moreover, since $V$ is smooth and convex, for every compact
set $K\Subset\R$ and all sufficiently large $i$, we have
\[
    W_i=V
    \quad\text{on a neighbourhood of }K.
\]
Indeed, on such a compact set the quantity $\norm{\nabla V}$ is bounded, and
the identity
\[
    V(x)=\inner{\nabla V(x)}{x}-V^*(\nabla V(x))
\]
shows that the restricted supremum defining $W_i$ agrees with $V$ once
$i>\sup_K\norm{\nabla V}$.

Let $\rho_\varepsilon$ be a standard smooth mollifier and choose
$\varepsilon_i\downarrow0$. Set
\[
    V_i:=W_i*\rho_{\varepsilon_i}.
\]
Then
\[
    V_i\in C^\infty(\R),
    \qquad
    \nabla^2V_i\geq0,
\]
and $V_i$ has bounded derivatives of every order for each fixed $i$. Moreover,
\[
    V_i\to V
    \quad\text{in }C^\infty_{\mathrm{loc}}(\R).
\]
Define
\[
    L_i:=\Delta-\inner{\nabla V_i}{\nabla}.
\]

Let $S_i\in C^\infty(\R)$ be a cutoff such that
\[
    0\leq S_i\leq1,\qquad
    S_i=0\text{ on }B_i(0),\qquad
    S_i=1\text{ outside }B_{2i}(0).
\]
Set
\[
    \varsigma_i(x):=\varsigma(x)(1-S_i(x))+2^{-i}.
\]
Then $\varsigma_i$ is smooth, strictly positive, bounded, and equal to the
constant $2^{-i}$ outside a compact set. Let $\mu_i$ be the smooth solution of
\[
    \begin{aligned}
        &\partial_t\mu_i=L_i\mu_i^{\beta+1},
        \qquad \text{in }\R\times(0,T],\\
        &\mu_i(x,0)=\varsigma_i(x),
        \qquad x\in\R.
    \end{aligned}
\]
For each fixed $i$, this problem is uniformly parabolic, since the comparison
principle gives
\[
    2^{-i}\leq \mu_i(x,t)\leq \norm{\varsigma_i}_{L^\infty(\R)},
    \qquad (x,t)\in\R\times[0,T].
\]
Because $V_i$ has bounded derivatives of all orders and $\varsigma_i$ is smooth
with bounded derivatives, the solution $\mu_i$ is smooth and has bounded
derivatives on $\R\times[0,T]$ for each fixed $i$.

Define the pressure variable
\[
    \nu_i:=\frac{\beta+1}{\beta}\mu_i^\beta.
\]
By Lemma \ref{lem:re64}, applied with $L_i$ in place of $L$, we have
\[
    \partial_t\nu_i=\beta\nu_iL_i\nu_i+\norm{\nabla\nu_i}^2.
\]
Set
\[
    q_i:=L_i\nu_i.
\]
Since $L_i$ is independent of time,
\[
    \partial_tq_i
    =
    L_i\partial_t\nu_i
    =
    L_i\left(\beta\nu_iL_i\nu_i+\norm{\nabla\nu_i}^2\right).
\]
Using the product rule
\[
    L_i(\phi\psi)
    =
    \phi L_i\psi+\psi L_i\phi
    +
    2\inner{\nabla\phi}{\nabla\psi},
\]
we get
\[
    \begin{aligned}
        L_i(\nu_iL_i\nu_i)
        &=
        \nu_iL_i^2\nu_i
        +
        (L_i\nu_i)^2
        +
        2\inner{\nabla\nu_i}{\nabla L_i\nu_i}\\
        &=
        \nu_iL_iq_i
        +
        q_i^2
        +
        2\inner{\nabla\nu_i}{\nabla q_i}.
    \end{aligned}
\]
Therefore,
\[
    L_i(\beta\nu_iL_i\nu_i)
    =
    \beta\nu_iL_iq_i
    +
    \beta q_i^2
    +
    2\beta\inner{\nabla\nu_i}{\nabla q_i}.
\]

We compute the term $L_i\norm{\nabla\nu_i}^2$ using the weighted Bochner
identity. Since
\[
    L_i=\Delta-\inner{\nabla V_i}{\nabla},
\]
we have
\[
    L_i\norm{\nabla\nu_i}^2
    =
    2\inner{\nabla\nu_i}{\nabla L_i\nu_i}
    +
    2\norm{\nabla^2\nu_i}^2
    +
    2\nabla^2V_i(\nabla\nu_i,\nabla\nu_i).
\]
Recalling that $q_i=L_i\nu_i$, this identity can be rewritten as
\[
    L_i\norm{\nabla\nu_i}^2
    =
    2\inner{\nabla\nu_i}{\nabla q_i}
    +
    2\norm{\nabla^2\nu_i}^2
    +
    2\nabla^2V_i(\nabla\nu_i,\nabla\nu_i).
\]

Combining the previous identities, we obtain
\[
    \begin{aligned}
        \partial_tq_i
        &=
        \beta\nu_iL_iq_i
        +
        2(\beta+1)\inner{\nabla\nu_i}{\nabla q_i}
        +
        \beta q_i^2\\
        &\quad
        +
        2\norm{\nabla^2\nu_i}^2
        +
        2\nabla^2V_i(\nabla\nu_i,\nabla\nu_i).
    \end{aligned}
\]
Since $\nabla^2V_i\geq0$, the last two terms are non-negative. Thus
\[
    \partial_tq_i
    -
    \beta\nu_iL_iq_i
    -
    2(\beta+1)\inner{\nabla\nu_i}{\nabla q_i}
    -
    \beta q_i^2
    \geq0.
\]
Define
\[
    \mathcal{P}_ir
    :=
    \partial_tr
    -
    \beta\nu_iL_ir
    -
    2(\beta+1)\inner{\nabla\nu_i}{\nabla r}
    -
    \beta r^2.
\]
Then
\[
    \mathcal{P}_iq_i\geq0.
\]

For $\tau>0$, define
\[
    Q(t):=-\frac{1}{\beta(t+\tau)}.
\]
Since $Q$ is independent of $x$, we have
\[
    L_iQ=0,
    \qquad
    \nabla Q=0.
\]
Therefore,
\[
    \mathcal{P}_iQ
    =
    Q'(t)-\beta Q(t)^2.
\]
But
\[
    Q'(t)=\frac{1}{\beta(t+\tau)^2},
    \qquad
    \beta Q(t)^2=\frac{1}{\beta(t+\tau)^2}.
\]
Hence
\[
    \mathcal{P}_iQ=0.
\]

For each fixed $i$, the function $q_i(\cdot,0)=L_i\nu_i(\cdot,0)$ is bounded
from below. Indeed, $\varsigma_i$ is smooth, strictly positive, and constant
outside a compact set, while $V_i$ has bounded derivatives. Hence there exists
$M_i\geq0$ such that
\[
    q_i(x,0)\geq -M_i,
    \qquad \forall x\in\R.
\]
Since
\[
    Q(0)=-\frac{1}{\beta\tau}\to-\infty
    \qquad\text{as }\tau\downarrow0,
\]
we can choose $\tau>0$ sufficiently small so that
\[
    Q(0)\leq -M_i.
\]
Therefore,
\[
    q_i(x,0)\geq Q(0),
    \qquad \forall x\in\R.
\]

We now compare $q_i$ and $Q$. Set
\[
    z_i:=q_i-Q.
\]
Since $\mathcal{P}_iq_i\geq0$ and $\mathcal{P}_iQ=0$, we have
\[
    \begin{aligned}
        0
        &\leq
        \mathcal{P}_iq_i-\mathcal{P}_iQ\\
        &=
        \partial_tz_i
        -
        \beta\nu_iL_iz_i
        -
        2(\beta+1)\inner{\nabla\nu_i}{\nabla z_i}
        -
        \beta(q_i^2-Q^2)\\
        &=
        \partial_tz_i
        -
        \beta\nu_iL_iz_i
        -
        2(\beta+1)\inner{\nabla\nu_i}{\nabla z_i}
        -
        \beta(q_i+Q)z_i.
    \end{aligned}
\]
Thus $z_i$ satisfies the linear differential inequality
\[
    \partial_tz_i
    -
    \beta\nu_iL_iz_i
    -
    2(\beta+1)\inner{\nabla\nu_i}{\nabla z_i}
    -
    \beta(q_i+Q)z_i
    \geq0.
\]
For each fixed $i$, the coefficients are smooth and bounded on
$\R\times[0,T]$, and $\nu_i$ is bounded above and below away from zero. Hence
the operator is uniformly parabolic. Since
\[
    z_i(x,0)=q_i(x,0)-Q(0)\geq0,
\]
the parabolic comparison principle on $\R$ gives
\[
    z_i(x,t)\geq0,
    \qquad (x,t)\in\R\times[0,T].
\]
Equivalently,
\[
    q_i(x,t)\geq Q(t)
    =
    -\frac{1}{\beta(t+\tau)}.
\]
Letting $\tau\downarrow0$ gives
\[
    L_i\nu_i(x,t)=q_i(x,t)
    \geq
    -\frac{1}{\beta t},
    \qquad (x,t)\in\R\times(0,T].
\]

It remains to pass to the limit. By the local compactness argument used in
Proposition \ref{prop:re19}, after passing to a subsequence,
\[
    \mu_i\to\mu
    \quad\text{locally smoothly in }\R\times(0,T].
\]
Therefore,
\[
    \nu_i=\frac{\beta+1}{\beta}\mu_i^\beta
    \to
    \nu=\frac{\beta+1}{\beta}\mu^\beta
\]
locally smoothly in $\R\times(0,T]$. Since $V_i\to V$ in
$C^\infty_{\mathrm{loc}}(\R)$, we also have
\[
    L_i\nu_i
    =
    \Delta\nu_i-\inner{\nabla V_i}{\nabla\nu_i}
    \to
    \Delta\nu-\inner{\nabla V}{\nabla\nu}
    =
    L\nu
\]
locally uniformly in $\R\times(0,T]$. Passing to the limit in
\[
    L_i\nu_i\geq -\frac{1}{\beta t}
\]
yields
\[
    L\nu\geq -\frac{1}{\beta t}
    \qquad \text{in }\R\times(0,T].
\]
This proves the proposition.
\end{proof}

As a consequence of the preceding proposition, we have
\[
    L\nu\geq -\frac{1}{\beta t},
    \qquad
    \nu=\frac{\beta+1}{\beta}\mu^\beta .
\]
Since $L$ is linear, this implies
\[
    L\mu^\beta\geq -\frac{1}{(1+\beta)t}.
\]
Therefore, for every $t>0$, every $x_0\in\R$, and every non-negative
$\varphi\in H_0^1(B_1(x_0))$, integration by parts with respect to $d\pi$
gives
\[
    \int_{B_1(x_0)}
    \langle \nabla\mu^\beta(t),\nabla\varphi\rangle\,d\pi(x)
    \leq
    \int_{B_1(x_0)}
    \frac{1}{(1+\beta)t}\varphi\,d\pi(x).
\]
Applying Theorem \ref{thm:apx9} to $u=\mu^\beta(t)$ and
$g=(1+\beta)^{-1}t^{-1}$, we obtain, for every $r\geq1$,
\[
    \sup_{B_{1/2}(x_0)}\mu^\beta(t)
    \leq
    C\left(d,r,\max_{B_1(x_0)}V,\min_{B_1(x_0)}V\right)
    \left\{
    \|\mu^\beta(t)\|_{L^r(B_1(x_0),\pi)}
    +
    \frac{1}{(1+\beta)t}
    \right\}.
\]

We now distinguish two cases. If $0<\beta\leq1$, then $1/\beta\geq1$, and we
choose $r=1/\beta$. This gives
\[
    \|\mu^\beta(t)\|_{L^{1/\beta}(B_1(x_0),\pi)}
    =
    \left(\int_{B_1(x_0)}\mu(t)d\pi\right)^\beta
    \leq
    \|\mu(t)\|_{L^1(\R,\pi)}^\beta.
\]
Hence
\[
    \sup_{B_{1/2}(x_0)}\mu^\beta(t)
    \leq
    C\left(d,\beta,\max_{B_1(x_0)}V,\min_{B_1(x_0)}V\right)
    \left\{
    \|\mu(t)\|_{L^1(\R,\pi)}^\beta
    +
    \frac{1}{(1+\beta)t}
    \right\}.
\]

If $\beta>1$, we choose instead $r=1$. Then
\[
    \|\mu^\beta(t)\|_{L^1(B_1(x_0),\pi)}
    =
    \int_{B_1(x_0)}\mu^\beta(t)d\pi
    \leq
    \|\mu(t)\|_{L^\beta(\R,\pi)}^\beta.
\]
Using the $L^1$--$L^\beta$ smoothing estimate, we have, for every $t>0$,
\[
    \|\mu(t)\|_{L^\beta(\R,\pi)}
    \leq
    C\left(\beta,\lambda,t,\|\varsigma\|_{L^1(\R,\pi)}\right).
\]
Therefore,
\[
    \sup_{B_{1/2}(x_0)}\mu^\beta(t)
    \leq
    C\left(
    d,\beta,\lambda,t,\|\varsigma\|_{L^1(\R,\pi)},
    \max_{B_1(x_0)}V,\min_{B_1(x_0)}V
    \right).
\]

Combining the two cases, we obtain
\begin{equation}\label{eq:re110}
    \sup_{B_{1/2}(x_0)}\mu(t)
    \leq
    C\left(
    d,\beta,\lambda,t,\|\varsigma\|_{L^1(\R,\pi)},
    \max_{B_1(x_0)}V,\min_{B_1(x_0)}V
    \right)
    <\infty .
\end{equation}

\subsubsection{Cauchy Problem with Positive Initial Data in $L^1(\R,\pi)$}

In this subsection, we assume that the local upper bound \eqref{eq:re110} is
available. In particular, this holds under the convexity assumption on $V$ used
in the Aronson--B\'enilan estimate above. We also assume that
$\pi=e^{-V}\in\mathcal{P}(\R)$ satisfies the Poincar\'e inequality with
constant $\lambda>0$.
We proved the following proposition.

\begin{proposition}\label{prop:re23}
Assume that $V\in C^\infty(\R)$ is convex, that is,
\[
    \nabla^2 V\geq 0,
\]
and assume that $\pi=e^{-V}\in\mathcal{P}(\R)$ satisfies the Poincar\'e
inequality with constant $\lambda>0$. Let $\varsigma\in L^1(\R,\pi)$ be
locally strictly positive in the sense stated in the Introduction.

Then the Cauchy problem
\[
    \begin{aligned}
        &\partial_t\mu=L\mu^{\beta+1},
        \qquad L=\Delta-\inner{\nabla V}{\nabla},\\
        &\mu(x,0)=\varsigma(x),
    \end{aligned}
\]
admits a unique positive weak solution $\mu\in\Gamma'$, where
\[
    \begin{aligned}
        \Gamma':=\{\mu:\;&
        \mu\in\mathcal{C}([0,T],L^1(\R,\pi))
        \cap L^\infty(\tau,T;L^p(\R,\pi)),\quad
        \forall p\in(0,\infty),\ \forall \tau>0,\\
        &
        \partial_t\mu\in L^\infty(\tau,T;L^1(\R,\pi)),
        \quad \forall \tau>0,\\
        &
        \norm{\mu(t)}_{L^1(\R,\pi)}
        \text{ is non-increasing in }t,\\
        &
        \nabla\Psi(\mu)\in L^2(\tau,T;L^2(\R,\pi)),
        \quad
        L\Psi(\mu)\in L^\infty(\tau,T;L^1(\R,\pi)),
        \quad \forall \tau>0
        \}.
    \end{aligned}
\]
Moreover,
\[
    \mu\in C^\infty(\R\times(0,T]).
\]
In particular, the equation is satisfied classically on $\R\times(0,T]$, and
the initial condition is attained in $L^1(\R,\pi)$.
\end{proposition}
\begin{proof}
Let $\varsigma\in L^1(\R,\pi)$ be non-negative and locally strictly positive,
in the sense that for every compact set $Z\Subset\R$ there exists a constant
$c_Z>0$ such that
\[
    \varsigma(x)\geq c_Z
    \quad\text{for a.e. }x\in Z.
\]
Choose a sequence
\[
    \varsigma_i\in C^\infty(\R)
\]
such that
\[
    \varsigma_i>0,
    \qquad
    \varsigma_i\in L^\infty(\R),
    \qquad
    \varsigma_i\to\varsigma
    \quad\text{in }L^1(\R,\pi),
\]
and
\[
    \sup_i\norm{\varsigma_i}_{L^1(\R,\pi)}<\infty.
\]
Moreover, we choose the approximation so that for every compact set
$Z\Subset\R$, there exist constants $c_Z>0$ and $i_Z\in\mathbb{N}$ such that
\[
    \varsigma_i(x)\geq c_Z,
    \qquad \forall x\in Z,\quad \forall i\geq i_Z.
\]
Such an approximation can be obtained by truncating $\varsigma$, mollifying,
and adding a small positive constant.

For each $i$, Proposition \ref{prop:re19} gives a unique bounded positive
classical solution $\mu_i$ of
\[
    \begin{aligned}
        &\partial_t\mu_i=L\mu_i^{\beta+1},\\
        &\mu_i(x,0)=\varsigma_i(x),
    \end{aligned}
\]
where
\[
    L=\Delta-\inner{\nabla V}{\nabla}.
\]

We first prove local compactness. Let $Z\Subset\R$ and
$0<t_1<t_2\leq T$. By the local upper bound \eqref{eq:re110}, applied to
$\mu_i$, we have
\[
    \sup_{Z\times[t_1,t_2]}\mu_i
    \leq
    C(Z,t_1,t_2,\beta,V,\sup_i\norm{\varsigma_i}_{L^1(\R,\pi)}),
\]
where the constant is independent of $i$. On the other hand, by the local lower
bound in Lemma \ref{lem:re17}, using the uniform positivity of $\varsigma_i$ on
compact sets, there exists
\[
    c(Z,t_1,t_2)>0
\]
such that
\[
    \mu_i(x,t)\geq c(Z,t_1,t_2),
    \qquad
    \forall (x,t)\in Z\times[t_1,t_2],
\]
for all sufficiently large $i$. Thus the equation is uniformly parabolic on
$Z\times[t_1,t_2]$, with constants independent of $i$.

Consequently, by Theorem \ref{thm:apx7} and the Schauder estimate in Theorem
\ref{thm:apx8}, for every multi-index $k$ there exist $\delta\in(0,1)$ and a
constant $C$ independent of $i$ such that
\[
    \norm{D^k\mu_i}_{2+\delta,1+\delta/2;Z\times[t_1,t_2]}
    \leq C.
\]
By a diagonal argument, there exists a function $\mu$ such that, up to a
subsequence,
\[
    \mu_i\to\mu
    \quad\text{locally smoothly in }\R\times(0,T].
\]
Therefore,
\[
    \mu\in C^\infty(\R\times(0,T])
\]
and
\[
    \partial_t\mu=L\mu^{\beta+1}
    \quad\text{classically in }\R\times(0,T].
\]
Moreover, by the lower bound above,
\[
    \mu>0
    \quad\text{in }\R\times(0,T].
\]

Next we pass the a priori estimates to the limit. Since
\[
    \sup_i\norm{\varsigma_i}_{L^1(\R,\pi)}<\infty,
\]
the estimates from Proposition \ref{prop:re19-poincare} give, for every
$p\in(0,\infty)$ and every $\tau>0$,
\[
    \norm{\mu_i(t)}_{L^p(\R,\pi)}
    \leq
    C(p,\beta,\lambda,\tau,T,\sup_i\norm{\varsigma_i}_{L^1(\R,\pi)}),
    \qquad t\in[\tau,T],
\]
with a constant independent of $i$. Passing to the limit and applying Fatou's
lemma gives
\[
    \mu\in L^\infty(\tau,T;L^p(\R,\pi)),
    \qquad
    \forall p\in(0,\infty),\quad \forall \tau>0.
\]

Similarly, the energy estimate gives
\[
    \int_{\tau}^{T}\int_{\R}
    \norm{\nabla\Psi(\mu_i)}^2d\pi(x)dt
    \leq
    C(\tau,T,\beta,\lambda,\sup_i\norm{\varsigma_i}_{L^1(\R,\pi)}),
\]
where
\[
    \Psi(r):=r^{1+\beta}.
\]
Passing to the limit and applying Fatou's lemma yields
\[
    \int_{\tau}^{T}\int_{\R}
    \norm{\nabla\Psi(\mu)}^2d\pi(x)dt
    \leq
    C(\tau,T,\beta,\lambda,\sup_i\norm{\varsigma_i}_{L^1(\R,\pi)}).
\]
Hence
\[
    \nabla\Psi(\mu)\in L^2(\tau,T;L^2(\R,\pi)),
    \qquad \forall \tau>0.
\]

For the time derivative, Proposition \ref{prop:re19} gives
\[
    \norm{\partial_t\mu_i(t)}_{L^1(\R,\pi)}
    =
    \norm{L\Psi(\mu_i(t))}_{L^1(\R,\pi)}
    \leq
    \frac{2\norm{\varsigma_i}_{L^1(\R,\pi)}}{\beta t}
    \leq
    \frac{C}{t}.
\]
Since $\mu_i\to\mu$ locally smoothly in $\R\times(0,T]$, we have
\[
    \partial_t\mu_i\to\partial_t\mu
    \quad\text{pointwise in }\R\times(0,T].
\]
Therefore, by Fatou's lemma,
\[
    \norm{\partial_t\mu(t)}_{L^1(\R,\pi)}
    \leq
    \liminf_{i\to\infty}
    \norm{\partial_t\mu_i(t)}_{L^1(\R,\pi)}
    \leq
    \frac{C}{t}
\]
for every $t>0$. Since $\partial_t\mu=L\Psi(\mu)$ classically for positive
times, we also obtain
\[
    L\Psi(\mu)\in L^\infty(\tau,T;L^1(\R,\pi)),
    \qquad \forall \tau>0.
\]

It remains to prove that $\mu$ attains the initial datum in $L^1(\R,\pi)$ and
that $\mu\in C([0,T],L^1(\R,\pi))$.

By Proposition \ref{prop:g6}, for all $i,j$ and all $t\in[0,T]$,
\[
    \norm{\mu_i(t)-\mu_j(t)}_{L^1(\R,\pi)}
    \leq
    \norm{\varsigma_i-\varsigma_j}_{L^1(\R,\pi)}.
\]
We first pass to the limit in this estimate for positive times. Fix $i$ and
$t>0$. Since, along the chosen subsequence,
\[
    \mu_j(\cdot,t)\to \mu(\cdot,t)
    \quad\text{a.e. in }\R,
\]
Fatou's lemma gives
\[
    \begin{aligned}
        \norm{\mu_i(t)-\mu(t)}_{L^1(\R,\pi)}
        &=
        \int_{\R}|\mu_i(x,t)-\mu(x,t)|\,d\pi(x)\\
        &\leq
        \liminf_{j\to\infty}
        \int_{\R}|\mu_i(x,t)-\mu_j(x,t)|\,d\pi(x)\\
        &\leq
        \liminf_{j\to\infty}
        \norm{\varsigma_i-\varsigma_j}_{L^1(\R,\pi)}\\
        &=
        \norm{\varsigma_i-\varsigma}_{L^1(\R,\pi)}.
    \end{aligned}
\]
Thus, for every fixed $i$ and every $t>0$,
\[
    \norm{\mu_i(t)-\mu(t)}_{L^1(\R,\pi)}
    \leq
    \norm{\varsigma_i-\varsigma}_{L^1(\R,\pi)}.
\]

We now prove the initial trace. For every fixed $i$ and every $t>0$, by the
triangle inequality and the estimate above,
\[
    \begin{aligned}
        \norm{\mu(t)-\varsigma}_{L^1(\R,\pi)}
        &\leq
        \norm{\mu(t)-\mu_i(t)}_{L^1(\R,\pi)}
        +
        \norm{\mu_i(t)-\varsigma_i}_{L^1(\R,\pi)}
        +
        \norm{\varsigma_i-\varsigma}_{L^1(\R,\pi)}\\
        &\leq
        2\norm{\varsigma_i-\varsigma}_{L^1(\R,\pi)}
        +
        \norm{\mu_i(t)-\varsigma_i}_{L^1(\R,\pi)}.
    \end{aligned}
\]
For fixed $i$, Proposition \ref{prop:re19} gives
\[
    \mu_i\in C([0,T],L^1(\R,\pi)),
    \qquad
    \mu_i(0)=\varsigma_i.
\]
Hence
\[
    \norm{\mu_i(t)-\varsigma_i}_{L^1(\R,\pi)}
    \to0
    \qquad\text{as }t\downarrow0.
\]
Therefore,
\[
    \limsup_{t\downarrow0}
    \norm{\mu(t)-\varsigma}_{L^1(\R,\pi)}
    \leq
    2\norm{\varsigma_i-\varsigma}_{L^1(\R,\pi)}.
\]
Letting $i\to\infty$ and using
\[
    \varsigma_i\to\varsigma
    \quad\text{in }L^1(\R,\pi),
\]
we obtain
\[
    \lim_{t\downarrow0}
    \norm{\mu(t)-\varsigma}_{L^1(\R,\pi)}
    =
    0.
\]
Thus the initial datum is attained in $L^1(\R,\pi)$. We henceforth define
\[
    \mu(0):=\varsigma.
\]

It remains to prove continuity at positive times. Let $t\in(0,T)$ and let $h$
be such that $t+h\in(0,T)$. For fixed $i$, using the estimate above at the two
positive times $t+h$ and $t$, we get
\[
    \begin{aligned}
        \norm{\mu(t+h)-\mu(t)}_{L^1(\R,\pi)}
        &\leq
        \norm{\mu(t+h)-\mu_i(t+h)}_{L^1(\R,\pi)}\\
        &\quad+
        \norm{\mu_i(t+h)-\mu_i(t)}_{L^1(\R,\pi)}\\
        &\quad+
        \norm{\mu_i(t)-\mu(t)}_{L^1(\R,\pi)}\\
        &\leq
        2\norm{\varsigma_i-\varsigma}_{L^1(\R,\pi)}
        +
        \norm{\mu_i(t+h)-\mu_i(t)}_{L^1(\R,\pi)}.
    \end{aligned}
\]
For fixed $i$, since $\mu_i\in C([0,T],L^1(\R,\pi))$,
\[
    \norm{\mu_i(t+h)-\mu_i(t)}_{L^1(\R,\pi)}
    \to0
    \qquad\text{as }h\to0.
\]
Therefore,
\[
    \limsup_{h\to0}
    \norm{\mu(t+h)-\mu(t)}_{L^1(\R,\pi)}
    \leq
    2\norm{\varsigma_i-\varsigma}_{L^1(\R,\pi)}.
\]
Letting $i\to\infty$, we obtain
\[
    \lim_{h\to0}
    \norm{\mu(t+h)-\mu(t)}_{L^1(\R,\pi)}
    =
    0.
\]
Together with the continuity at $t=0$ proved above, this yields
\[
    \mu\in C([0,T],L^1(\R,\pi)).
\]

The $L^1$ norm of $\mu$ is non-increasing. Indeed, for any
$0<t_1<t_2\leq T$, the assumptions of Proposition \ref{prop:g6} are satisfied
on the interval $[t_1,t_2]$. Applying Proposition \ref{prop:g6} with
$\mu_1=\mu$ and $\mu_2=0$, we obtain
\[
    \norm{\mu(t_2)}_{L^1(\R,\pi)}
    \leq
    \norm{\mu(t_1)}_{L^1(\R,\pi)}.
\]
By the $L^1(\R,\pi)$-continuity of $\mu$ at $t=0$, this inequality also extends
to $t_1=0$.

We now prove uniqueness. Let $\mu_1$ and $\mu_2$ be two solutions in the same
class with the same initial datum $\varsigma$. For every $\tau>0$, we have
\[
    \int_\tau^T\int_{\R}
    \norm{\nabla\Psi(\mu_i)}^2d\pi dt<\infty,
    \qquad i=1,2.
\]
Hence Proposition \ref{prop:g6} can be applied on $[\tau,T]$. Therefore, for
every $t>\tau>0$,
\[
    \norm{\mu_1(t)-\mu_2(t)}_{L^1(\R,\pi)}
    \leq
    \norm{\mu_1(\tau)-\mu_2(\tau)}_{L^1(\R,\pi)}.
\]
Letting $\tau\downarrow0$ and using
$\mu_1,\mu_2\in C([0,T],L^1(\R,\pi))$, we get
\[
    \norm{\mu_1(t)-\mu_2(t)}_{L^1(\R,\pi)}
    \leq
    \norm{\mu_1(0)-\mu_2(0)}_{L^1(\R,\pi)}
    =
    0.
\]
Thus $\mu_1(t)=\mu_2(t)$ in $L^1(\R,\pi)$ for every $t\in[0,T]$, and the
solution is unique.
\end{proof}

\begin{remark}\label{rmk:re17}
    The comparison principle is preserved in the passage to the whole-space
    limit. Let $\varsigma_1,\varsigma_2\in C^\infty(\R)$ satisfy
    \[
        0<\varsigma_2\leq \varsigma_1\leq \frac{1}{\epsilon_2}.
    \]
    Let $\mu_1$ and $\mu_2$ be the corresponding solutions constructed in
    Proposition \ref{prop:re19}. We claim that
    \[
        \mu_1(x,t)\geq \mu_2(x,t),
        \qquad (x,t)\in\R\times[0,T].
    \]

    Indeed, choose the same cutoff functions for both initial data and set
    \[
        \varsigma_{k,i}:=\varsigma_k\chi_i,
        \qquad k=1,2.
    \]
    Then
    \[
        \varsigma_{1,i}\geq \varsigma_{2,i}
        \qquad\text{in }B_{R_i}(0).
    \]
    Let $\mu_{k,i}$ be the zero-boundary Cauchy--Dirichlet solution on
    $B_{R_i}(0)$ with initial datum $\varsigma_{k,i}$.

    Since $\mu_{k,i}$ may vanish on the boundary, the equation is degenerate
    there, and the uniformly parabolic comparison principle cannot be applied
    directly to $\mu_{1,i}$ and $\mu_{2,i}$. To justify the comparison, we first
    approximate the zero-boundary problems by strictly positive boundary
    problems. For $\delta>0$, let $\mu_{k,i}^{\delta}$ solve
    \[
        \begin{aligned}
            &\partial_t\mu_{k,i}^{\delta}
            =L\left(\mu_{k,i}^{\delta}\right)^{\beta+1},
            \qquad \text{in }B_{R_i}(0)\times(0,T],\\
            &\mu_{k,i}^{\delta}(x,0)=\varsigma_{k,i}(x)+\delta,
            \qquad x\in B_{R_i}(0),\\
            &\mu_{k,i}^{\delta}(x,t)=\delta,
            \qquad (x,t)\in\partial B_{R_i}(0)\times[0,T].
        \end{aligned}
    \]
    For each fixed $\delta>0$, these problems are uniformly parabolic, because
    the solutions stay bounded above and below away from zero. Moreover,
    \[
        \varsigma_{1,i}+\delta\geq \varsigma_{2,i}+\delta
    \]
    and the boundary data are equal. Hence the classical comparison principle
    gives
    \[
        \mu_{1,i}^{\delta}\geq \mu_{2,i}^{\delta}
        \qquad
        \text{in }B_{R_i}(0)\times[0,T].
    \]
    Passing to the limit $\delta\downarrow0$, and using the construction of the
    zero-boundary solutions in Proposition \ref{prop:re11}, we obtain
    \[
        \mu_{1,i}\geq \mu_{2,i}
        \qquad
        \text{in }B_{R_i}(0)\times[0,T].
    \]
    Finally, letting $i\to\infty$ in the whole-space approximation yields
    \[
        \mu_1(x,t)\geq \mu_2(x,t),
        \qquad (x,t)\in\R\times[0,T].
    \]

    The same argument applies to initial data satisfying the assumptions of
    Proposition \ref{prop:re23}. Such solutions are obtained as limits of
    bounded, positive, smooth initial data, and the ordering is preserved at
    each approximation level. Passing to the limit yields the comparison
    principle for the limiting solutions.
\end{remark}

\subsubsection{Cauchy Problem with Non-negative Initial Data in $L^1(\R,\pi)$}

We now remove the local strict positivity assumption on the initial datum. We proved the following proposition.

\begin{proposition}\label{prop:re24}
Assume that $V\in C^\infty(\R)$ is convex, that is,
\[
    \nabla^2V\geq0,
\]
and assume that $\pi=e^{-V}\in\mathcal{P}(\R)$ satisfies the Poincar\'e
inequality with constant $\lambda>0$. Then the following Cauchy problem has a
unique non-negative weak solution in the function class $\Gamma'$:
\begin{equation}\label{eq:g79}
    \begin{aligned}
        &\partial_t\mu=L\mu^{\beta+1},
        \qquad L=\Delta-\inner{\nabla V}{\nabla},\\
        &\mu(x,0)=\varsigma(x),
    \end{aligned}
\end{equation}
where $\varsigma\in L^1(\R,\pi)$ and $\varsigma\geq0$. Here $\Gamma'$ is the
class defined in Proposition \ref{prop:re23}. The equation is satisfied in the
weak sense on $\R\times(0,T)$, and the initial condition is attained in
$L^1(\R,\pi)$.
\end{proposition}

\begin{proof}
For each $i\in\mathbb{N}$, define
\[
    \varsigma_i:=\varsigma+2^{-i}.
\]
Then $\varsigma_i$ is locally strictly positive in the sense of Proposition
\ref{prop:re23}. Moreover, since $\pi(\R)=1$,
\[
    \norm{\varsigma_i-\varsigma}_{L^1(\R,\pi)}
    =
    2^{-i}\pi(\R)
    =
    2^{-i}\to0.
\]
Thus
\[
    \varsigma_i\to\varsigma
    \quad\text{in }L^1(\R,\pi).
\]
Also,
\[
    \norm{\varsigma_i}_{L^1(\R,\pi)}
    \leq
    \norm{\varsigma}_{L^1(\R,\pi)}+1.
\]

Let $\mu_i$ be the solution corresponding to the initial datum $\varsigma_i$,
as constructed in Proposition \ref{prop:re23}. Then
\[
    \mu_i\in\Gamma',
    \qquad
    \mu_i\in C^\infty(\R\times(0,T]).
\]
Since
\[
    \varsigma_i\geq \varsigma_{i+1},
\]
the comparison principle in Remark \ref{rmk:re17} gives
\[
    \mu_i(x,t)\geq \mu_{i+1}(x,t),
    \qquad (x,t)\in\R\times(0,T].
\]
Hence there exists a pointwise monotone limit
\[
    \mu_{\mathrm{pt}}(x,t):=\lim_{i\to\infty}\mu_i(x,t),
    \qquad (x,t)\in\R\times(0,T].
\]
Clearly,
\[
    \mu_{\mathrm{pt}}\geq0.
\]

We next prove convergence in $C([0,T],L^1(\R,\pi))$. Fix $i,j$ and let
$0<\tau<t\leq T$. Since $\mu_i,\mu_j\in\Gamma'$, we have
\[
    \int_\tau^T\int_{\R}\norm{\nabla\Psi(\mu_i)}^2d\pi ds<\infty,
    \qquad
    \int_\tau^T\int_{\R}\norm{\nabla\Psi(\mu_j)}^2d\pi ds<\infty.
\]
Therefore Proposition \ref{prop:g6} can be applied on the time interval
$[\tau,T]$. Hence
\[
    \norm{\mu_i(t)-\mu_j(t)}_{L^1(\R,\pi)}
    \leq
    \norm{\mu_i(\tau)-\mu_j(\tau)}_{L^1(\R,\pi)}.
\]
Since each $\mu_i$ is continuous from $[0,T]$ into $L^1(\R,\pi)$ and satisfies
\[
    \mu_i(0)=\varsigma_i,
    \qquad
    \mu_j(0)=\varsigma_j
\]
in $L^1(\R,\pi)$, we may let $\tau\downarrow0$ and obtain
\[
    \norm{\mu_i(t)-\mu_j(t)}_{L^1(\R,\pi)}
    \leq
    \norm{\varsigma_i-\varsigma_j}_{L^1(\R,\pi)},
    \qquad 0<t\leq T.
\]
For $t=0$, the same estimate is immediate:
\[
    \norm{\mu_i(0)-\mu_j(0)}_{L^1(\R,\pi)}
    =
    \norm{\varsigma_i-\varsigma_j}_{L^1(\R,\pi)}.
\]
Consequently,
\[
    \sup_{t\in[0,T]}
    \norm{\mu_i(t)-\mu_j(t)}_{L^1(\R,\pi)}
    \leq
    \norm{\varsigma_i-\varsigma_j}_{L^1(\R,\pi)}.
\]
Since $\varsigma_i\to\varsigma$ in $L^1(\R,\pi)$, the sequence
$\{\varsigma_i\}$ is Cauchy in $L^1(\R,\pi)$. Therefore $\{\mu_i\}$ is Cauchy
in the Banach space
\[
    C([0,T],L^1(\R,\pi)).
\]
Hence there exists a function
\[
    \mu\in C([0,T],L^1(\R,\pi))
\]
such that
\[
    \mu_i\to\mu
    \quad\text{in }C([0,T],L^1(\R,\pi)).
\]
Equivalently,
\[
    \sup_{t\in[0,T]}
    \norm{\mu_i(t)-\mu(t)}_{L^1(\R,\pi)}
    \to0.
\]

We now identify this $L^1$ limit with the pointwise monotone limit. For each
fixed $t>0$, the convergence
\[
    \mu_i(\cdot,t)\to\mu(\cdot,t)
    \quad\text{in }L^1(\R,\pi)
\]
implies that a subsequence converges to $\mu(\cdot,t)$ almost everywhere.
However, the full sequence $\mu_i(\cdot,t)$ converges pointwise monotonically
to $\mu_{\mathrm{pt}}(\cdot,t)$. Therefore the two limits agree:
\[
    \mu(\cdot,t)=\mu_{\mathrm{pt}}(\cdot,t)
    \quad\text{for a.e. }x\in\R.
\]
Thus, after modifying $\mu$ on a set of $\pi$-measure zero if necessary, we
identify $\mu$ with the pointwise monotone limit and simply write
\[
    \mu(x,t)=\lim_{i\to\infty}\mu_i(x,t),
    \qquad t>0.
\]

Evaluating the convergence in $C([0,T],L^1(\R,\pi))$ at $t=0$, we get
\[
    \norm{\mu(0)-\varsigma}_{L^1(\R,\pi)}
    \leq
    \norm{\mu(0)-\mu_i(0)}_{L^1(\R,\pi)}
    +
    \norm{\varsigma_i-\varsigma}_{L^1(\R,\pi)}.
\]
Since $\mu_i(0)=\varsigma_i$ and both terms on the right-hand side tend to
zero, we obtain
\[
    \mu(0)=\varsigma
    \quad\text{in }L^1(\R,\pi).
\]
Therefore the initial condition is attained in $L^1(\R,\pi)$.

We now pass the a priori estimates to the limit. Since
\[
    \sup_i\norm{\varsigma_i}_{L^1(\R,\pi)}<\infty,
\]
the estimates obtained in Proposition \ref{prop:re23} give, for every
$p\in(0,\infty)$ and every $\tau>0$,
\[
    \sup_{t\in[\tau,T]}
    \norm{\mu_i(t)}_{L^p(\R,\pi)}
    \leq
    C\left(p,\beta,\lambda,\tau,T,\norm{\varsigma}_{L^1(\R,\pi)}\right),
\]
where the constant is independent of $i$. By Fatou's lemma, for every
$t\in[\tau,T]$,
\[
    \norm{\mu(t)}_{L^p(\R,\pi)}
    \leq
    \liminf_{i\to\infty}
    \norm{\mu_i(t)}_{L^p(\R,\pi)}.
\]
Hence
\[
    \sup_{t\in[\tau,T]}
    \norm{\mu(t)}_{L^p(\R,\pi)}
    \leq
    C\left(p,\beta,\lambda,\tau,T,\norm{\varsigma}_{L^1(\R,\pi)}\right).
\]
Thus
\[
    \mu\in L^\infty(\tau,T;L^p(\R,\pi)),
    \qquad
    \forall p\in(0,\infty),\ \forall \tau>0.
\]

Similarly, the energy estimates for $\mu_i$ give
\[
    \int_{\tau}^{T}\int_{\R}
    \norm{\nabla\Psi(\mu_i)}^2d\pi(x)dt
    \leq
    C\left(\tau,T,\beta,\lambda,\norm{\varsigma}_{L^1(\R,\pi)}\right),
\]
where
\[
    \Psi(r):=r^{1+\beta}.
\]
Hence, after passing to a subsequence if necessary,
\[
    \nabla\Psi(\mu_i)\rightharpoonup \kappa
    \quad\text{weakly in }L^2(\tau,T;L^2(\R,\pi)).
\]
We claim that
\[
    \kappa=\nabla\Psi(\mu)
\]
in the sense of distributions. 

Let
\[
    E_\tau:=(\tau,T)\times\R .
\]
Since
\[
    \mu_i\to\mu
    \quad\text{in }C([0,T],L^1(\R,\pi)),
\]
we first have
\[
    \mu_i\to\mu
    \quad\text{strongly in }L^1(E_\tau,dtd\pi).
\]
Indeed,
\[
    \begin{aligned}
        \|\mu_i-\mu\|_{L^1(E_\tau)}
        &=
        \int_\tau^T
        \|\mu_i(t)-\mu(t)\|_{L^1(\R,\pi)}\,dt  \\
        &\leq
        (T-\tau)
        \sup_{t\in[0,T]}
        \|\mu_i(t)-\mu(t)\|_{L^1(\R,\pi)}
        \to0 .
    \end{aligned}
\]

We now upgrade this convergence to higher integrability. Fix any finite
$q>1$. Choose $s>q$. By the uniform estimates, we have
\[
    \sup_i\|\mu_i\|_{L^\infty(\tau,T;L^s(\R,\pi))}<\infty.
\]
Moreover, by Fatou's lemma,
\[
    \mu\in L^\infty(\tau,T;L^s(\R,\pi)),
\]
and hence
\[
    \sup_i\|\mu_i-\mu\|_{L^s(E_\tau)}<\infty.
\]
Interpolating between $L^1(E_\tau)$ and $L^s(E_\tau)$ gives
\[
    \|\mu_i-\mu\|_{L^q(E_\tau)}
    \leq
    \|\mu_i-\mu\|_{L^1(E_\tau)}^\theta
    \|\mu_i-\mu\|_{L^s(E_\tau)}^{1-\theta},
\]
where $\theta\in(0,1)$ is determined by
\[
    \frac1q=\theta+\frac{1-\theta}{s}.
\]
Since
\[
    \|\mu_i-\mu\|_{L^1(E_\tau)}\to0
\]
and
\[
    \sup_i\|\mu_i-\mu\|_{L^s(E_\tau)}<\infty,
\]
we obtain
\[
    \mu_i\to\mu
    \quad\text{strongly in }L^q(E_\tau)
\]
for every finite $q>1$. Together with the already proved $L^1$ convergence,
this gives
\[
    \mu_i\to\mu
    \quad\text{strongly in }L^q(E_\tau),
    \qquad \forall q\in[1,\infty).
\]

We now prove the strong convergence of the nonlinear term. Let
\[
    m:=1+\beta .
\]
Since $m<\infty$, the previous conclusion implies
\[
    \mu_i\to\mu
    \quad\text{strongly in }L^m(E_\tau).
\]
For non-negative numbers $a,b$, we have
\[
    |a^m-b^m|
    \leq
    m|a-b|\left(a^{m-1}+b^{m-1}\right).
\]
Applying this with $a=\mu_i$ and $b=\mu$, and then using H\"older's inequality,
we get
\[
    \begin{aligned}
        \|\mu_i^m-\mu^m\|_{L^1(E_\tau)}
        &\leq
        m\int_{E_\tau}
        |\mu_i-\mu|
        \left(\mu_i^{m-1}+\mu^{m-1}\right)\,d\pi dt \\
        &\leq
        m\|\mu_i-\mu\|_{L^m(E_\tau)}
        \left(
        \|\mu_i\|_{L^m(E_\tau)}^{m-1}
        +
        \|\mu\|_{L^m(E_\tau)}^{m-1}
        \right).
    \end{aligned}
\]
The factor
\[
    \left(
        \|\mu_i\|_{L^m(E_\tau)}^{m-1}
        +
        \|\mu\|_{L^m(E_\tau)}^{m-1}
    \right)
\]
is uniformly bounded, while
\[
    \|\mu_i-\mu\|_{L^m(E_\tau)}\to0.
\]
Therefore,
\[
    \mu_i^m\to\mu^m
    \quad\text{strongly in }L^1(E_\tau).
\]
Since $m=1+\beta$ and $\Psi(r)=r^{1+\beta}$, this gives
\[
    \Psi(\mu_i)=\mu_i^{1+\beta}
    \to
    \mu^{1+\beta}=\Psi(\mu)
    \quad\text{strongly in }L^1((\tau,T)\times\R,\pi).
\]
Therefore, for every test function
$\phi\in C_c^\infty(\R\times(\tau,T))$,
\[
    \int_{\tau}^{T}\int_{\R}
    \Psi(\mu_i)\,\partial_x\phi\,d\pi dt
    \to
    \int_{\tau}^{T}\int_{\R}
    \Psi(\mu)\,\partial_x\phi\,d\pi dt,
\]
while
\[
    \int_{\tau}^{T}\int_{\R}
    \nabla\Psi(\mu_i)\phi\,d\pi dt
    \to
    \int_{\tau}^{T}\int_{\R}
    \kappa\phi\,d\pi dt.
\]
Thus $\kappa=\nabla\Psi(\mu)$ in the distributional sense. Consequently,
\[
    \nabla\Psi(\mu)\in L^2(\tau,T;L^2(\R,\pi)),
    \qquad \forall\tau>0,
\]
and, by weak lower semicontinuity,
\[
    \int_{\tau}^{T}\int_{\R}
    \norm{\nabla\Psi(\mu)}^2d\pi(x)dt
    \leq
    C\left(\tau,T,\beta,\lambda,\norm{\varsigma}_{L^1(\R,\pi)}\right).
\]

We now verify that $\mu$ satisfies the equation weakly. For every
$0<t_1<t_2\leq T$ and every
$\phi\in C_c^\infty(\R\times[t_1,t_2])$, the smooth solution $\mu_i$ satisfies
\[
    \int_{t_1}^{t_2}\int_{\R}
    \left[
    -\mu_i\,\partial_t\phi
    +
    \inner{\nabla\Psi(\mu_i)}{\nabla\phi}
    \right]d\pi dt
    +
    \int_{\R}\mu_i(x,t)\phi(x,t)d\pi(x)\Big|_{t_1}^{t_2}
    =
    0.
\]
Using the strong convergence of $\mu_i$ in $C([0,T],L^1(\R,\pi))$ and the weak
convergence of $\nabla\Psi(\mu_i)$ in $L^2(t_1,t_2;L^2(\R,\pi))$, we may pass
to the limit and obtain
\[
    \int_{t_1}^{t_2}\int_{\R}
    \left[
    -\mu\,\partial_t\phi
    +
    \inner{\nabla\Psi(\mu)}{\nabla\phi}
    \right]d\pi dt
    +
    \int_{\R}\mu(x,t)\phi(x,t)d\pi(x)\Big|_{t_1}^{t_2}
    =
    0.
\]
Thus
\[
    \partial_t\mu=L\Psi(\mu)
\]
in the sense of distributions on $\R\times(0,T)$.

We also obtain a time-derivative estimate away from $t=0$. For each $i$,
\[
    \norm{\partial_t\mu_i(t)}_{L^1(\R,\pi)}
    =
    \norm{L\Psi(\mu_i(t))}_{L^1(\R,\pi)}
    \leq
    \frac{2\norm{\varsigma_i}_{L^1(\R,\pi)}}{\beta t}.
\]
Hence, for every $0<\tau<T$,
\[
    \sup_i\sup_{t\in[\tau,T]}
    \norm{\partial_t\mu_i(t)}_{L^1(\R,\pi)}
    \leq
    \frac{C}{\tau}.
\]
Equivalently,
\[
    \norm{\mu_i(t)-\mu_i(s)}_{L^1(\R,\pi)}
    \leq
    \frac{C}{\tau}|t-s|,
    \qquad s,t\in[\tau,T],
\]
with $C$ independent of $i$. Passing to the limit in
$C([0,T],L^1(\R,\pi))$, we get
\[
    \norm{\mu(t)-\mu(s)}_{L^1(\R,\pi)}
    \leq
    \frac{C}{\tau}|t-s|,
    \qquad s,t\in[\tau,T].
\]
Thus
\[
    \mu\in W^{1,\infty}(\tau,T;L^1(\R,\pi)).
\]
Since the distributional equation has already been identified, we conclude that
\[
    \partial_t\mu=L\Psi(\mu)
    \quad\text{in }L^\infty(\tau,T;L^1(\R,\pi)),
    \qquad \forall\tau>0.
\]
Equivalently,
\[
    L\Psi(\mu)\in L^\infty(\tau,T;L^1(\R,\pi)),
    \qquad \forall\tau>0.
\]

The $L^1$ norm of $\mu$ is non-increasing. Indeed, for any
$0<t_1<t_2\leq T$, the conditions of Remark \ref{rmk:g6-weak} are satisfied
on the interval $[t_1,t_2]$. Applying Remark~\ref{rmk:g6-weak} with
$\mu_1=\mu$ and $\mu_2=0$, we obtain
\[
    \norm{\mu(t_2)}_{L^1(\R,\pi)}
    \leq
    \norm{\mu(t_1)}_{L^1(\R,\pi)}.
\]
By the $L^1(\R,\pi)$-continuity of $\mu$ at $t=0$, this inequality also extends
to $t_1=0$.

Finally, we prove uniqueness. Let $\mu_1$ and $\mu_2$ be two solutions in the
same class with the same initial datum $\varsigma$. For every $\tau>0$, we have
\[
    \int_{\tau}^{T}\int_{\R}
    \norm{\nabla\Psi(\mu_i)}^2d\pi dt<\infty,
    \qquad i=1,2.
\]
Hence Remark~\ref{rmk:g6-weak} can be applied on $[\tau,T]$. Therefore, for
every $t>\tau>0$,
\[
    \norm{\mu_1(t)-\mu_2(t)}_{L^1(\R,\pi)}
    \leq
    \norm{\mu_1(\tau)-\mu_2(\tau)}_{L^1(\R,\pi)}.
\]
Letting $\tau\downarrow0$ and using
\[
    \mu_1,\mu_2\in C([0,T],L^1(\R,\pi)),
\]
we get
\[
    \norm{\mu_1(t)-\mu_2(t)}_{L^1(\R,\pi)}
    \leq
    \norm{\mu_1(0)-\mu_2(0)}_{L^1(\R,\pi)}
    =
    0.
\]
Thus $\mu_1(t)=\mu_2(t)$ in $L^1(\R,\pi)$ for every $t\in[0,T]$, and the
solution is unique.

\end{proof}

\subsection{Large Time Asymptotics}

We first show that the weak solution obtained above preserves its
$L^1(\R,\pi)$ mass. This improves the non-increasing property proved earlier.

\begin{lemma}[Mass conservation]
Let $\mu$ be a non-negative weak solution of
\[
    \partial_t\mu=L\Psi(\mu),
    \qquad
    \Psi(\mu)=\mu^{1+\beta},
\]
on $\R\times(0,T)$ such that
\[
    \mu\in C([0,T],L^1(\R,\pi)),
\]
and, for every $\tau>0$,
\[
    \nabla\Psi(\mu)\in L^2(\tau,T;L^2(\R,\pi)).
\]
Then
\[
    \norm{\mu(t)}_{L^1(\R,\pi)}
    =
    \norm{\mu(s)}_{L^1(\R,\pi)}
    \qquad
    \forall\,s,t\in[0,T].
\]
In particular, if $\mu(0)=\varsigma$ in $L^1(\R,\pi)$, then
\[
    \norm{\mu(t)}_{L^1(\R,\pi)}
    =
    \norm{\varsigma}_{L^1(\R,\pi)}
    \qquad
    \forall\,t\in[0,T].
\]
\end{lemma}

\begin{proof}
Let $\zeta_R\in C_c^\infty(\R)$ be a cutoff function such that
\[
    0\leq \zeta_R\leq1,
    \qquad
    \zeta_R=1 \text{ on } B_R(0),
    \qquad
    \zeta_R=0 \text{ outside } B_{2R}(0),
\]
and
\[
    \norm{\nabla\zeta_R}_{L^\infty(\R)}\leq \frac{C}{R}.
\]
Fix $0<s<t\leq T$. Using the weak formulation with the spatial test function
$\zeta_R$ and a standard approximation of the indicator function of $[s,t]$ in
time, we obtain
\[
    \int_{\R}\mu(x,t)\zeta_R(x)d\pi(x)
    -
    \int_{\R}\mu(x,s)\zeta_R(x)d\pi(x)
    =
    -\int_s^t\int_{\R}
    \langle\nabla\Psi(\mu),\nabla\zeta_R\rangle d\pi d\tau .
\]
Therefore,
\[
\begin{aligned}
    &\left|
    \int_{\R}\mu(x,t)\zeta_R(x)d\pi(x)
    -
    \int_{\R}\mu(x,s)\zeta_R(x)d\pi(x)
    \right| \\
    &\qquad\leq
    \left(
    \int_s^t\int_{\R}
    \norm{\nabla\Psi(\mu)}^2d\pi d\tau
    \right)^{1/2}
    \left(
    \int_s^t\int_{\R}
    \norm{\nabla\zeta_R}^2d\pi d\tau
    \right)^{1/2}.
\end{aligned}
\]
Since $\nabla\zeta_R$ is supported in $B_{2R}(0)\setminus B_R(0)$, we have
\[
    \int_s^t\int_{\R}
    \norm{\nabla\zeta_R}^2d\pi d\tau
    \leq
    \frac{C(t-s)}{R^2}
    \pi\left(B_{2R}(0)\setminus B_R(0)\right).
\]
Because $\pi$ is a probability measure,
\[
    \pi\left(B_{2R}(0)\setminus B_R(0)\right)\to0
    \qquad\text{as }R\to\infty.
\]
Hence
\[
    \int_s^t\int_{\R}
    \norm{\nabla\zeta_R}^2d\pi d\tau
    \to0
    \qquad\text{as }R\to\infty.
\]
It follows that
\[
    \lim_{R\to\infty}
    \left|
    \int_{\R}\mu(x,t)\zeta_R(x)d\pi(x)
    -
    \int_{\R}\mu(x,s)\zeta_R(x)d\pi(x)
    \right|
    =
    0.
\]
Since $\mu(\cdot,t),\mu(\cdot,s)\in L^1(\R,\pi)$ and
$\zeta_R\uparrow1$, the monotone convergence theorem gives
\[
    \int_{\R}\mu(x,t)d\pi(x)
    =
    \int_{\R}\mu(x,s)d\pi(x),
    \qquad 0<s<t\leq T.
\]
Finally, since $\mu\in C([0,T],L^1(\R,\pi))$, letting $s\downarrow0$ yields
\[
    \norm{\mu(t)}_{L^1(\R,\pi)}
    =
    \norm{\mu(0)}_{L^1(\R,\pi)}
    =
    \norm{\varsigma}_{L^1(\R,\pi)}.
\]
This proves the lemma.
\end{proof}

Therefore, the solution constructed in Proposition \ref{prop:re24} belongs to
the class $\Gamma$ appearing in Theorem \ref{thm:f1}; namely, its $L^1$ mass is
conserved rather than merely non-increasing.

We now prove the large-time estimate.

\begin{proposition}
	\label{prop:refinal}
Assume that $\pi=e^{-V}\in\mathcal{P}(\R)$ satisfies the Poincar\'e inequality
with constant $\lambda>0$. Let $\mu\in\Gamma$ be the non-negative weak solution
of
\[
    \begin{aligned}
        &\partial_t\mu
        =
        \frac{1}{1+\beta}
        \left(\Delta\mu^{1+\beta}
        -
        \langle\nabla V,\nabla\mu^{1+\beta}\rangle\right),\\
        &\mu(x,0)=\varsigma(x),
        \qquad
        \varsigma\in L^1(\R,\pi),\quad \varsigma\geq0.
    \end{aligned}
\]
Let
\[
    M:=\norm{\varsigma}_{L^1(\R,\pi)}.
\]
If $M=0$, then $\mu\equiv0$. If $M>0$, then for every
$p>1$, $\beta>0$, and $p+\beta\geq2$, we have
\[
    \Renyi(\mu(t))
    \leq
    \begin{cases}
    -\dfrac{
    \log\left(
    e^{-\beta \Renyi(\varsigma)}
    +
    \dfrac{2\beta\lambda(p-1)}{(p+\beta)^2}
    M^\beta t
    \right)}
    {\beta},
    & 0<t<t_0,\\[2ex]
    e^{-\frac{2p\lambda M^\beta(t-t_0)}{(p+\beta)^2}}
    \Renyi(\mu(t_0)),
    & t\geq t_0,
    \end{cases}
\]
where
\[
    \Renyi(\mu(t))
    :=
    \log\left(\frac{\norm{\mu(t)}_{L^p(\R,\pi)}}{M}\right)
    \geq0,
\]
and
\[
    t_0:=
    \inf\left\{
    t\geq0:
    \Renyi(\mu(t))\leq \frac{p-1}{p}
    \right\}.
\]
If $\varsigma\notin L^p(\R,\pi)$, then we use the convention
\[
    \Renyi(\varsigma)=+\infty,
    \qquad
    e^{-\beta\Renyi(\varsigma)}=0.
\]
\end{proposition}

\begin{proof}
If $M=0$, then $\varsigma=0$ in $L^1(\R,\pi)$. By uniqueness, $\mu\equiv0$.
Thus we assume $M>0$.

Define the normalized solution
\[
    \bar{\mu}(x,t):=
    \frac{1}{M}\mu\left(x,M^{-\beta}t\right).
\]
Then $\bar{\mu}$ satisfies
\[
    \partial_t\bar{\mu}
    =
    \frac{1}{1+\beta}
    \left(\Delta\bar{\mu}^{1+\beta}
    -
    \langle\nabla V,\nabla\bar{\mu}^{1+\beta}\rangle\right),
\]
with initial datum
\[
    \bar{\mu}(x,0)=\frac{\varsigma(x)}{M}.
\]
By the mass conservation lemma,
\[
    \norm{\bar{\mu}(t)}_{L^1(\R,\pi)}=1,
    \qquad t\geq0.
\]

We first prove the estimate for the normalized solution. Applying Theorem
\ref{thm:4} to the normalized Cauchy--Dirichlet approximations used in the
construction of $\bar{\mu}$, and then passing to the limit by Fatou's lemma,
gives
\[
    K_p(\bar{\mu}(t))
    \leq
    \begin{cases}
    -\dfrac{p}{
    \beta(p-1)}
    \log\left(
    e^{-\frac{\beta(p-1)}{p}K_p(\bar{\mu}(0))}
    +
    \dfrac{2\beta\lambda(p-1)}{(p+\beta)^2}t
    \right),
    &0<t<\bar{t}_0,\\[2ex]
    e^{-\frac{2p\lambda(t-\bar{t}_0)}{(p+\beta)^2}}
    K_p(\bar{\mu}(\bar{t}_0)),
    &t\geq \bar{t}_0,
    \end{cases}
\]
where
\[
    K_p(f):=\frac{1}{p-1}\log\left(\int_{\R}f^p\,d\pi\right),
\]
and
\[
    \bar{t}_0:=
    \inf\left\{
    t\geq0:K_p(\bar{\mu}(t))\leq1
    \right\}.
\]
The passage to the limit is justified as follows. The approximating solutions
converge to $\bar{\mu}$ in $L^1_{\mathrm{loc}}((0,T];L^1(\R,\pi))$ and
almost everywhere along a subsequence. Hence Fatou's lemma yields
\[
    \int_{\R}\bar{\mu}^p(t)d\pi
    \leq
    \liminf_i
    \int_{\R}\bar{\mu}_i^p(t)d\pi
\]
for every $t>0$. Thus the upper bounds for the approximations pass to the
limit. For the initial datum, the approximations may be chosen so that their
$L^p$ norms converge monotonically to the $L^p$ norm of
$\varsigma/M$, with the value $+\infty$ allowed. Therefore the convention
$e^{-\beta\Renyi(\varsigma)}=0$ is consistent when
$\varsigma\notin L^p(\R,\pi)$.

Now observe that
\[
    \int_{\R}\bar{\mu}^p(t)d\pi
    =
    \frac{1}{M^p}
    \int_{\R}\mu^p(M^{-\beta}t)d\pi.
\]
Hence
\[
    K_p(\bar{\mu}(t))
    =
    \frac{1}{p-1}
    \log\left(
    \frac{\norm{\mu(M^{-\beta}t)}_{L^p(\R,\pi)}^p}{M^p}
    \right)
    =
    \frac{p}{p-1}
    \Renyi\left(\mu(M^{-\beta}t)\right).
\]
Equivalently,
\[
    \Renyi(\mu(s))
    =
    \frac{p-1}{p}K_p(\bar{\mu}(M^\beta s)).
\]
The condition
\[
    K_p(\bar{\mu}(M^\beta s))\leq1
\]
is equivalent to
\[
    \Renyi(\mu(s))\leq\frac{p-1}{p}.
\]
Thus
\[
    \bar{t}_0=M^\beta t_0.
\]

Substituting $t=M^\beta s$ in the estimate for $K_p(\bar{\mu}(t))$, and then
multiplying by $(p-1)/p$, gives
\[
    \Renyi(\mu(s))
    \leq
    -\frac{1}{\beta}
    \log\left(
    e^{-\beta\Renyi(\varsigma)}
    +
    \frac{2\beta\lambda(p-1)}{(p+\beta)^2}
    M^\beta s
    \right)
\]
for $0<s<t_0$, and
\[
    \Renyi(\mu(s))
    \leq
    e^{-\frac{2p\lambda M^\beta(s-t_0)}{(p+\beta)^2}}
    \Renyi(\mu(t_0))
\]
for $s\geq t_0$. Renaming $s$ as $t$ proves the proposition.
\end{proof}

\section{Conclusion}

In this paper, we studied a weighted porous medium equation associated with a
Gibbs probability measure $\pi=e^{-V}$ on $\R$. Under the assumptions that
$V$ is smooth and convex and that $\pi$ satisfies a Poincar\'e inequality, we
proved the well-posedness of the Cauchy problem for non-negative initial data in
$L^1(\R,\pi)$. The solution is constructed by first solving suitable
Cauchy--Dirichlet problems on bounded domains and then passing to the
whole-space limit. The $L^1$ contraction principle plays a central role in
establishing uniqueness, stability, and convergence of the approximating
solutions.

A key difficulty is that, unlike the classical porous medium equation, the
presence of the drift term generated by the potential $V$ prevents the direct
use of standard Barenblatt-type barriers. To overcome this, we introduced a
local lower barrier for the pressure variable and combined it with an
Aronson--B\'enilan type estimate. This yields local upper and lower bounds at
positive times, which are sufficient to obtain local uniform parabolicity and
hence higher regularity for locally strictly positive data.

We also established mass conservation for weak solutions and proved an
$L^1$--$L^p$ smoothing effect. More precisely, for every admissible
$p>1$, solutions with merely $L^1(\R,\pi)$ initial data become
$L^p(\R,\pi)$ at every positive time. Moreover, the logarithmic ratio between
the $L^p(\R,\pi)$ norm of the solution and its conserved $L^1(\R,\pi)$ mass
satisfies a two-phase decay estimate: it first decays at a super-exponential
rate and then decays exponentially to zero. This shows convergence, in the
corresponding $L^p$ sense, toward the constant equilibrium determined by the
initial mass.

Several natural questions remain open. One possible direction is to weaken the
convexity assumption on $V$, which is used in the Aronson--B\'enilan estimate.
Another direction is to investigate whether analogous smoothing and decay
estimates hold under functional inequalities weaker than the Poincar\'e
inequality, or for more general weighted nonlinear diffusion equations. These
questions will be addressed in future work.

\section*{Acknowledgements and Competing Interests}

%

The author acknowledges the support of the Munich Center for Machine Learning and appreciates the valuable suggestions provided by Prof. Massimo Fornasier.\\

\noindent There is no conflicts of interest.


\bibliographystyle{apalike}
\bibliography{bibliography}

\section{Appendix}

\subsection{Weak Version of Proposition \ref{prop:g6}}\label{apx:weak12}
\begin{proposition}[$L^1$ contraction principle for weak solutions]\label{prop:g6-weak}
Let
\[
    L:=\Delta-\langle\nabla V,\nabla\rangle,
    \qquad
    \Psi(r):=r^{1+\beta}.
\]
Assume that \(\pi=e^{-V}\) is a probability measure on
\(\mathbb R^d\). Let \(\mu_1,\mu_2\) be two nonnegative weak solutions of
\[
    \partial_t\mu=L\Psi(\mu)
\]
on \(\mathbb R^d\times(0,T)\), satisfying
\[
    \mu_i\in C([0,T],L^1(\mathbb R^d,\pi))\cap W^{1,1}_{\mathrm{loc}}(0,T;L^1_{\mathrm{loc}}(\mathbb R^d,\pi)),
    \qquad i=1,2,
\]
and, for every \(\tau>0\),
\[
    \Psi(\mu_i)\in
    L^2_{\mathrm{loc}}(0,T;H^1_{\mathrm{loc}}(\mathbb R^d,\pi)).
\]

Assume moreover that, for every \(0<s_1<s_2\le T\) and every
\[
    \phi\in C^1([s_1,s_2];C_c^\infty(\mathbb R^d)),
\]
one has
\[
    \int_{s_1}^{s_2}\int_{\mathbb R^d}
    \left[
    -\mu_i\,\partial_t\phi
    +
    \langle\nabla\Psi(\mu_i),\nabla\phi\rangle
    \right]d\pi dt
    +
    \int_{\mathbb R^d}\mu_i(x,t)\phi(x,t)d\pi(x)\Big|_{s_1}^{s_2}
    =
    0.
\]
Then, for every
\[
    0\leq t_1\le t_2\le T,
\]
we have
\[
    \|(\mu_1(t_2)-\mu_2(t_2))^+\|_{L^1(\mathbb R^d,\pi)}
    \le
    \|(\mu_1(t_1)-\mu_2(t_1))^+\|_{L^1(\mathbb R^d,\pi)}.
\]
Consequently,
\[
    \|\mu_1(t_2)-\mu_2(t_2)\|_{L^1(\mathbb R^d,\pi)}
    \le
    \|\mu_1(t_1)-\mu_2(t_1)\|_{L^1(\mathbb R^d,\pi)}.
\]

\end{proposition}

\begin{proof}
Set
\[
    y:=\mu_1-\mu_2,
    \qquad
    w:=\Psi(\mu_1)-\Psi(\mu_2).
\]
Since \(\Psi(r)=r^{1+\beta}\) is strictly increasing on \([0,\infty)\),
\[
    yw\ge0
    \qquad\text{a.e.}
\]
Subtracting the two weak formulations gives
\[
    \partial_t y=Lw
\]
in the sense of distributions on \(\mathbb R^d\times(0,T)\).

Fix
\[
    0<\tau\le t_1<t_2\le T,
\]
 Lemma~\ref{lem:weak-kato} gives
\[
    \partial_t y^+\le Lw^+
\]
in the sense of distributions on \(\mathbb R^d\times(t_1,t_2)\).

Let
\[
    \zeta\in C_c^\infty(\mathbb R^d),
    \qquad
    \zeta\ge0,
\]
\[
Y(t):=\int_{\mathbb R^d}y^+(x,t)\zeta(x)\,d\pi(x)
\]
and
\[
B(t):=
-\int_{\mathbb R^d}
\langle \nabla w^+(x,t),\nabla\zeta(x)\rangle\,d\pi(x).
\]
Since \(y^+\in C([t_1,t_2];L^1(\mathbb R^d,\pi))\), we have
\(Y\in C([t_1,t_2])\). Moreover \(B\in L^1(t_1,t_2)\).

Let \(\eta_\varepsilon\in C_c^\infty((a,b))\) be a nonnegative cutoff satisfying
\[
0\le \eta_\varepsilon\le1,
\qquad
\eta_\varepsilon\to \mathbf 1_{(t_1,t_2)}
\quad\text{a.e.},
\]
and such that \(\eta_\varepsilon\) increases from \(0\) to \(1\) on
\((t_1,t_1+\varepsilon)\) and decreases from \(1\) to \(0\) on
\((t_2-\varepsilon,t_2)\). Testing
\[
\partial_t y^+\le Lw^+
\]
against
\[
\varphi(x,t)=\eta_\varepsilon(t)\zeta(x)
\]
gives
\[
-\int_a^b Y(t)\eta_\varepsilon'(t)\,dt
\le
\int_a^b \eta_\varepsilon(t)B(t)\,dt.
\]
Letting \(\varepsilon\downarrow0\), using the continuity of \(Y\) and dominated
convergence for \(B\), yields
\[
Y(t_2)-Y(t_1)
\le
\int_{t_1}^{t_2}B(t)\,dt.
\]
Therefore
\[
\int_{\mathbb R^d}y^+(x,t_2)\zeta(x)\,d\pi(x)
-
\int_{\mathbb R^d}y^+(x,t_1)\zeta(x)\,d\pi(x)
\le
-\int_{t_1}^{t_2}\int_{\mathbb R^d}
\langle\nabla w^+,\nabla\zeta\rangle\,d\pi dt.
\]
Using
\[
\nabla w^+
=
\operatorname{sign}_0^+(w)\nabla w
\]
gives
\[
\int_{\mathbb R^d}y^+(x,t_2)\zeta(x)\,d\pi(x)
-
\int_{\mathbb R^d}y^+(x,t_1)\zeta(x)\,d\pi(x)
\le
-\int_{t_1}^{t_2}\int_{\mathbb R^d}
\operatorname{sign}_0^+(w)
\langle\nabla w,\nabla\zeta\rangle\,d\pi dt.
\]

We now remove the spatial cutoff. Choose
\(\zeta_1\in C_c^\infty(\mathbb R^d)\) such that
\[
    0\le \zeta_1\le1,
    \qquad
    \zeta_1(x)=1 \quad\text{for } |x|\le1,
    \qquad
    \zeta_1(x)=0 \quad\text{for } |x|\ge2.
\]
Define
\[
    \zeta_n(x):=\zeta_1(x/n).
\]
Then
\[
    0\le \zeta_n\le1,
    \qquad
    \zeta_n(x)\to1,
    \qquad
    \|\nabla\zeta_n\|_{L^\infty(\mathbb R^d)}
    \le \frac{C}{n}.
\]
Applying the localized inequality with \(\zeta=\zeta_n\), we get
\[
    \begin{aligned}
        &\int_{\mathbb R^d}y^+(x,t_2)\zeta_n(x)\,d\pi(x)
        -
        \int_{\mathbb R^d}y^+(x,t_1)\zeta_n(x)\,d\pi(x)\\
        &\qquad\le
        \left|
        \int_{t_1}^{t_2}\int_{\mathbb R^d}
        \mathbf 1_{\{w>0\}}
        \langle\nabla w,\nabla\zeta_n\rangle\,d\pi dt
        \right|.
    \end{aligned}
\]
By Cauchy--Schwarz,
\[
    \begin{aligned}
        &\left|
        \int_{t_1}^{t_2}\int_{\mathbb R^d}
        \mathbf 1_{\{w>0\}}
        \langle\nabla w,\nabla\zeta_n\rangle\,d\pi dt
        \right|\\
        &\qquad\le
        \left(
        \int_{t_1}^{t_2}\int_{\mathbb R^d}
        |\nabla w|^2\,d\pi dt
        \right)^{1/2}
        \left(
        \int_{t_1}^{t_2}\int_{\mathbb R^d}
        |\nabla\zeta_n|^2\,d\pi dt
        \right)^{1/2}.
    \end{aligned}
\]
The first factor is finite by the energy assumption. Since \(\pi\) is a
probability measure,
\[
    \int_{t_1}^{t_2}\int_{\mathbb R^d}
    |\nabla\zeta_n|^2\,d\pi dt
    \le
    \frac{C(t_2-t_1)}{n^2}.
\]
Hence the cutoff error tends to \(0\) as \(n\to\infty\). Therefore,
\[
    \int_{\mathbb R^d}y^+(x,t_2)\zeta_n(x)\,d\pi(x)
    \le
    \int_{\mathbb R^d}y^+(x,t_1)\zeta_n(x)\,d\pi(x)
    +o(1).
\]
Letting \(n\to\infty\), and using dominated convergence, we obtain
\[
    \int_{\mathbb R^d}y^+(x,t_2)\,d\pi(x)
    \le
    \int_{\mathbb R^d}y^+(x,t_1)\,d\pi(x).
\]
That is,
\[
    \|(\mu_1(t_2)-\mu_2(t_2))^+\|_{L^1(\mathbb R^d,\pi)}
    \le
    \|(\mu_1(t_1)-\mu_2(t_1))^+\|_{L^1(\mathbb R^d,\pi)}.
\]

Exchanging the roles of \(\mu_1\) and \(\mu_2\), we also obtain
\[
    \|(\mu_2(t_2)-\mu_1(t_2))^+\|_{L^1(\mathbb R^d,\pi)}
    \le
    \|(\mu_2(t_1)-\mu_1(t_1))^+\|_{L^1(\mathbb R^d,\pi)}.
\]
Adding the two inequalities yields
\[
    \|\mu_1(t_2)-\mu_2(t_2)\|_{L^1(\mathbb R^d,\pi)}
    \le
    \|\mu_1(t_1)-\mu_2(t_1)\|_{L^1(\mathbb R^d,\pi)}.
\]

For $t_1=0$, we use the assumption $\mu_i\in C([0,T],L^1(\mathbb{R}^d,\pi)).$

\end{proof}

\begin{lemma}[Weak Kato inequality for monotone pairs]\label{lem:weak-kato}
Assume \[
    y=\mu_1-\mu_2,
    \qquad
    w=\Psi(\mu_1)-\Psi(\mu_2),
    \qquad
    \Psi(r)=r^{1+\beta},
\]
with \(\mu_1,\mu_2\ge0\) and 
\[
    \mu_i\in W^{1,1}_{\mathrm{loc}}(0,T;L^1_{\mathrm{loc}}(\mathbb R^d,\pi)),
    \qquad
    \Psi(\mu_i)\in L^2_{\mathrm{loc}}(0,T;H^1_{\mathrm{loc}}(\mathbb R^d,\pi)),
\]
Suppose that
\[
    \partial_t y=Lw
\]
in the weak sense, namely
\[
    -\int_0^T\int_{\mathbb R^d}  y\,\partial_t\varphi\,d\pi dt
    =
    -\int_0^T\int_{\mathbb R^d}
    \langle \nabla w,\nabla\varphi\rangle\,d\pi dt
\]
for every \(\varphi\in C_c^\infty(\mathbb R^d\times (0,T))\).
 Then
\[
    \partial_t y^+\le Lw^+
\]
in the sense of distributions. Equivalently, for every
\[
    \varphi\in C_c^\infty(\mathbb R^d\times (0,T)),
    \qquad
    \varphi\ge0,
\]
one has
\[
    -\int_0^T\int_{\mathbb R^d} y^+\,\partial_t\varphi\,d\pi dt
    \le
    -\int_0^T\int_{\mathbb R^d}
    \langle \nabla w^+,\nabla\varphi\rangle\,d\pi dt.
\]
\end{lemma}

\begin{proof}
Set
\[
    \chi_y:=\mathbf 1_{\{y>0\}},
    \qquad
    \chi_w:=\mathbf 1_{\{w>0\}}.
\]
Since
\[
    y=\mu_1-\mu_2,
    \qquad
    w=\Psi(\mu_1)-\Psi(\mu_2),
    \qquad
    \Psi(r)=r^{1+\beta},
\]
and since \(\Psi\) is strictly increasing on \([0,\infty)\), we have
\[
    \mu_1>\mu_2
    \iff
    \Psi(\mu_1)>\Psi(\mu_2),
\]
\[
    \mu_1=\mu_2
    \iff
    \Psi(\mu_1)=\Psi(\mu_2),
\]
and
\[
    \mu_1<\mu_2
    \iff
    \Psi(\mu_1)<\Psi(\mu_2).
\]
Therefore
\[
    y>0\iff w>0,
    \qquad
    y=0\iff w=0,
    \qquad
    y<0\iff w<0.
\]
Hence
\[
    \chi_y=\chi_w
    \qquad\text{a.e. on }\mathbb R^d\times(0,T).
\]
We denote this common function by
\[
    \chi:=\chi_y=\chi_w.
\]

Since
\[
    \mu_i\in W^{1,1}_{\mathrm{loc}}
    (0,T;L^1_{\mathrm{loc}}(\mathbb R^d,\pi)),
\]
we have
\[
    y=\mu_1-\mu_2
    \in W^{1,1}_{\mathrm{loc}}
    (0,T;L^1_{\mathrm{loc}}(\mathbb R^d,\pi)).
\]
Because the map \(r\mapsto r^+\) is Lipschitz, the chain rule gives
\[
    y^+\in W^{1,1}_{\mathrm{loc}}
    (0,T;L^1_{\mathrm{loc}}(\mathbb R^d,\pi)),
\]
and
\[
    \partial_t y^+
    =
    \mathbf 1_{\{y>0\}}\partial_t y
    =
    \chi\,\partial_t y
    \qquad\text{a.e. on }\mathbb R^d\times(0,T).
\]

Similarly, since
\[
    \Psi(\mu_i)\in L^2_{\mathrm{loc}}
    (0,T;H^1_{\mathrm{loc}}(\mathbb R^d,\pi)),
\]
we have
\[
    w\in L^2_{\mathrm{loc}}
    (0,T;H^1_{\mathrm{loc}}(\mathbb R^d,\pi)).
\]
Again by the Sobolev chain rule,
\[
    w^+\in L^2_{\mathrm{loc}}
    (0,T;H^1_{\mathrm{loc}}(\mathbb R^d,\pi)),
\]
and
\[
    \nabla w^+
    =
    \mathbf 1_{\{w>0\}}\nabla w
    =
    \chi\nabla w
    \qquad\text{a.e. on }\mathbb R^d\times(0,T).
\]

Now let
\[
    \varphi\in C_c^\infty(\mathbb R^d\times(0,T)),
    \qquad
    \varphi\ge0.
\]
By assumption, for every
\[
    \zeta\in C_c^\infty(\mathbb R^d\times(0,T)),
\]
one has
\[
    -\int_0^T\int_{\mathbb R^d}
    y\,\partial_t\zeta\,d\pi dt
    =
    -\int_0^T\int_{\mathbb R^d}
    \langle\nabla w,\nabla\zeta\rangle\,d\pi dt.
\]
Since
\[
    y\in W^{1,1}_{\mathrm{loc}}
    (0,T;L^1_{\mathrm{loc}}(\mathbb R^d,\pi)),
\]
we may integrate by parts in time. Because \(\zeta\) is compactly supported
in \((0,T)\), no boundary term appears. Hence
\[
    \int_0^T\int_{\mathbb R^d}
    \partial_t y\,\zeta\,d\pi dt
    =
    -\int_0^T\int_{\mathbb R^d}
    \langle\nabla w,\nabla\zeta\rangle\,d\pi dt.
\]

Let \(\theta_\varepsilon\in C^\infty(\mathbb R)\) satisfy
\[
    0\le \theta_\varepsilon\le1,
    \qquad
    \theta_\varepsilon'\ge0,
\]
\[
    \theta_\varepsilon(r)=0
    \quad\text{for }r\le0,
    \qquad
    \theta_\varepsilon(r)=1
    \quad\text{for }r\ge\varepsilon.
\]
Then
\[
    \theta_\varepsilon(w)
    \to
    \mathbf 1_{\{w>0\}}
    =
    \chi
    \qquad\text{a.e. on }\mathbb R^d\times(0,T).
\]

We take
\[
    \zeta_\varepsilon:=\theta_\varepsilon(w)\varphi.
\]
This function is not necessarily smooth, but it is admissible by density.
Indeed,
\[
    \zeta_\varepsilon\in
    L^2_{\mathrm{loc}}(0,T;H^1_c(\mathbb R^d,\pi)),
\]
and
\[
    \nabla\zeta_\varepsilon
    =
    \theta_\varepsilon'(w)\varphi\nabla w
    +
    \theta_\varepsilon(w)\nabla\varphi.
\]
Approximating \(\zeta_\varepsilon\) by smooth compactly supported functions
in \(L^2(0,T;H^1)\), the right-hand side passes to the limit by
\(L^2\)-convergence of the gradients, while the left-hand side passes to
the limit because \(\partial_t y\in L^1_{\mathrm{loc}}\) and
\(\zeta_\varepsilon\) is bounded and compactly supported.

Therefore we may use \(\zeta_\varepsilon\) as a test function. We get
\[
    \int_0^T\int_{\mathbb R^d}
    \partial_t y\,\theta_\varepsilon(w)\varphi\,d\pi dt
    =
    -\int_0^T\int_{\mathbb R^d}
    \left\langle
        \nabla w,
        \nabla\bigl(\theta_\varepsilon(w)\varphi\bigr)
    \right\rangle d\pi dt.
\]
Expanding the gradient gives
\[
    \nabla\bigl(\theta_\varepsilon(w)\varphi\bigr)
    =
    \theta_\varepsilon'(w)\varphi\nabla w
    +
    \theta_\varepsilon(w)\nabla\varphi.
\]
Hence
\[
\begin{aligned}
    \int_0^T\int_{\mathbb R^d}
    \partial_t y\,\theta_\varepsilon(w)\varphi\,d\pi dt
    &=
    -\int_0^T\int_{\mathbb R^d}
    \theta_\varepsilon'(w)\varphi|\nabla w|^2\,d\pi dt  \\
    &\quad
    -\int_0^T\int_{\mathbb R^d}
    \theta_\varepsilon(w)
    \langle\nabla w,\nabla\varphi\rangle\,d\pi dt.
\end{aligned}
\]
Since
\[
    \theta_\varepsilon'(w)\ge0
    \qquad\text{and}\qquad
    \varphi\ge0,
\]
the first term on the right-hand side is nonpositive. Therefore
\[
    \int_0^T\int_{\mathbb R^d}
    \partial_t y\,\theta_\varepsilon(w)\varphi\,d\pi dt
    \le
    -\int_0^T\int_{\mathbb R^d}
    \theta_\varepsilon(w)
    \langle\nabla w,\nabla\varphi\rangle\,d\pi dt.
\]

Letting \(\varepsilon\downarrow0\), we obtain by dominated convergence
\[
    \int_0^T\int_{\mathbb R^d}
    \chi\,\partial_t y\,\varphi\,d\pi dt
    \le
    -\int_0^T\int_{\mathbb R^d}
    \chi\langle\nabla w,\nabla\varphi\rangle\,d\pi dt.
\]
Indeed, on the support of \(\varphi\),
\[
    |\theta_\varepsilon(w)\partial_t y\,\varphi|
    \le
    |\partial_t y|\,|\varphi|,
\]
which is integrable, and
\[
    \left|
    \theta_\varepsilon(w)
    \langle\nabla w,\nabla\varphi\rangle
    \right|
    \le
    |\nabla w|\,|\nabla\varphi|,
\]
which is integrable because \(\nabla w\in L^2_{\mathrm{loc}}\) and
\(\nabla\varphi\in L^\infty_c\).

Using
\[
    \partial_t y^+=\chi\,\partial_t y,
    \qquad
    \nabla w^+=\chi\nabla w,
\]
we get
\[
    \int_0^T\int_{\mathbb R^d}
    \partial_t y^+\,\varphi\,d\pi dt
    \le
    -\int_0^T\int_{\mathbb R^d}
    \langle\nabla w^+,\nabla\varphi\rangle\,d\pi dt.
\]
Finally, since \(\varphi\in C_c^\infty(\mathbb R^d\times(0,T))\),
integration by parts in time gives
\[
    \int_0^T\int_{\mathbb R^d}
    \partial_t y^+\,\varphi\,d\pi dt
    =
    -\int_0^T\int_{\mathbb R^d}
    y^+\,\partial_t\varphi\,d\pi dt.
\]
Therefore
\[
    -\int_0^T\int_{\mathbb R^d}
    y^+\,\partial_t\varphi\,d\pi dt
    \le
    -\int_0^T\int_{\mathbb R^d}
    \langle\nabla w^+,\nabla\varphi\rangle\,d\pi dt.
\]
This proves
\[
    \partial_t y^+\le Lw^+
\]
in the sense of distributions on \(\mathbb R^d\times(0,T)\).
\end{proof}

\subsection{H\"older Continuity of Quasilinear Equations of the Porous Medium Type}
This section is taken from \cite[Chapter 3, Chapter 5, Theorem 15.1, Theorem 16.1]{dibenedetto2012harnack}.

Let $E$ be an open set in $\mathbb{R}^d$ and for $T>0$ let $E_T$ denote the cylindrical domain $E \times(0, T)$. Consider quasilinear, degenerate or singular parabolic partial differential equations of the form

\begin{equation}\label{eq:g51}
    \partial_t u-\operatorname{div} \mathbf{A}(x, t, u, \nabla u)=B(x, t, u, \nabla u) \quad \text { weakly in } E_T
\end{equation}where the functions $\mathbf{A}: E_T \times \mathbb{R}^{d+1} \rightarrow \mathbb{R}^d$ and $B: E_T \times \mathbb{R}^{d+1} \rightarrow \mathbb{R}$ are only assumed to be measurable and subject to the structrue conditions

$$
\left\{\begin{array}{l}
\inner{\mathrm{A}(x, t, u, \nabla u)}{\nabla u} \geq C_o m|u|^{m-1}\norm{\nabla u}^2-C^2|u|^{m+1} \\
\norm{\mathrm{~A}(x, t, u, D u)} \leq C_1 m|u|^{m-1}\norm{\nabla u}+C|u|^{m} \\
|B(x, t, u, D u)| \leq C m|u|^{m-1}\norm{\nabla u}+C^2|u|^{m}
\end{array} \text { a.e. in } E_T\right.
$$where $m>0, C_o$, and $C_1$ are given positive constants, and $C$ is a given nonnegative constant. When $C=0$ the equation is homogeneous.

A function
$$
\begin{aligned}
& u \in \mathcal{C}_{\mathrm{loc}}\left(0, T ; L_{\mathrm{loc}}^2(E)\right),|u|^{\frac{m+1}{2}} \in L_{\mathrm{loc}}^2\left(0, T ; W_{\mathrm{loc}}^{1,2}(E)\right)\quad \text { if } m>1 \\
& u \in \mathcal{C}_{\mathrm{loc}}\left(0, T ; L_{\mathrm{loc}}^{m+1}(E)\right), |u|^m \in L_{\mathrm{loc}}^2\left(0, T ; W_{\mathrm{loc}}^{1.2}(E)\right) \quad\text { if } 0<m<1
\end{aligned}
$$is a local, weak sub(super)-solution to \eqref{eq:g51} if for every compact set $K \subset E$ and every sub-interval $ [t_1,t_2]\subset (0,T]$
\begin{equation*}
    \begin{aligned}
        \left.\int_K u \varphi d x\right|_{t_1} ^{t_2}+\int_{t_1}^{t_2} \int_K\left[-u \partial_t\varphi+\inner{\mathbf{A}\left(x, t, u, \nabla u\right)}{\nabla\varphi}\right] d x d t \\
\leq(\geq) \int_{t_1}^{t_2} \int_K B\left(x, t,,u, \nabla u\right) \varphi d x d t
    \end{aligned}
\end{equation*}
for all nonnegative testing functions
$$
\varphi\in W_{\text {loc }}^{1,2}\left(0, T ; L^2(K)\right) \cap L_{\mathrm{loc}}^2\left(0, T ; W_{0}^{1,2}(K)\right).
$$

In the following, we always assume $m>1$.

Let
$$
\Gamma''=\partial E_T-\bar{E} \times\{T\}
$$denote the parabolic boundary of $E_T$, and for a compact set $K \subset E_T$ introduce the intrinsic, parabolic $m$-distance from $K$ to $\Gamma''$ by
$$
m-\operatorname{dist}(K ; \Gamma'') \stackrel{\text { def }}{=} \inf _{\substack{(x, t) \in K \\(y, s) \in \Gamma''}}\left(\norm{x-y}+\|u\|_{\infty, E_T}^{\frac{m-1}{2}}|t-s|^{\frac{1}{2}}\right) .
$$

\begin{theorem}\label{thm:apx7}
    Let $u$ be a bounded, local, weak solution to the degenerate porous medium type equations \eqref{eq:g51} which satisfy the above structure conditions. Then $u$ is locally H\"older continuous in $E_T$, and there exist constants $\gamma>1$ and $p \in(0,1)$ that can be determined a priori only in terms of the data $\left\{m, d, C_o, C_1, C\right\}$, such that for every compact set $K \subset E_T$,$$
\left|u\left(x_1, t_1\right)-u\left(x_2, t_2\right)\right| \leq \gamma\|u\|_{\infty, E_T}\left(\frac{\norm{x_1-x_2}+\|u\|_{\infty, E_T}^{\frac{m-1}{2}}\left|t_1-t_2\right|^{\frac{1}{2}}}{m-\operatorname{dist}(K ; \Gamma'')}\right)^p
$$for every pair of points $\left(x_1, t_1\right)$, and $\left(x_2, t_2\right) \in K$.
\end{theorem}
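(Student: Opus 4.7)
The plan is to establish the stated Hölder continuity via DiBenedetto's intrinsic-scaling strategy combined with De Giorgi-type oscillation decay. The essential obstacle in the degenerate regime $m>1$ is that the diffusion coefficient $m|u|^{m-1}$ vanishes where $u=0$, so the standard parabolic scaling $(x,t)\mapsto(r x,r^2 t)$ does not match the equation. The remedy is to work on intrinsic cylinders of the form $Q_\rho(\theta):=B_\rho(x_0)\times(t_0-\theta\rho^2,t_0)$ with $\theta\sim\omega^{1-m}$, where $\omega$ is the current essential oscillation of $u$ on the cylinder; at this scale the equation behaves as if it were uniformly parabolic, and one can hope to reduce the oscillation by a fixed factor at each step.

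The argument proceeds in three stages. First, I would derive caloric energy and logarithmic estimates for the truncations $(u-k)_\pm$. Testing \eqref{eq:g51} against $\zeta^2(u-k)_\pm$ with a smooth cutoff $\zeta$ and invoking the three structure conditions yields, modulo the inhomogeneous contributions carrying the constant $C$, bounds on $\sup_t\int(u-k)_\pm^2\zeta^2\,dx$ and on $\int\!\!\int|u|^{m-1}\norm{\nabla(u-k)_\pm}^2\zeta^2\,dx\,dt$. A companion logarithmic estimate, obtained by testing against a suitable nonlinear function of $(u-k)_\pm$, controls the measure of the level sets $\{u>k\}$ on individual time slices in a way that propagates pointwise information forward in time; this step is crucial because the degeneracy forbids the classical uniformly parabolic Harnack-type time propagation.

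Second, on a fixed intrinsic cylinder $Q_\rho(\theta)$ with $\theta=(\omega/4)^{1-m}$, I would run the two-alternative scheme. Either the set where $u$ is far from its essential supremum has substantial measure at some time level (the first alternative), in which case the logarithmic estimate propagates this information through the entire time-slab and a De Giorgi iteration on the energy estimates reduces the oscillation from above; or the first alternative fails, meaning $u$ is close to $\operatorname{ess\,sup}_{Q_\rho(\theta)}u$ on a large portion of the cylinder (the second alternative), and a symmetric De Giorgi iteration applied to $(u-k)_+$ again reduces the oscillation. In either case,
\begin{equation*}
\operatorname{ess\,osc}_{Q_{\sigma\rho}(\theta')}u\;\leq\;\eta\,\omega,
\end{equation*}
for constants $\sigma,\eta\in(0,1)$ depending only on the data $\{m,d,C_o,C_1,C\}$.

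Finally, I would iterate this oscillation-decay statement on nested intrinsic cylinders $Q_{\rho_n}(\theta_n)$ with $\rho_n=\sigma^n\rho_0$, $\omega_n\leq\eta^n\omega_0$, and $\theta_n=(\omega_n/4)^{1-m}$, and then convert the geometric decay of $\omega_n$ into the stated modulus of continuity. Comparing standard cylinders of radius $r$ with intrinsic cylinders at height $r$ is precisely what produces the factor $\norm{u}_{\infty,E_T}^{(m-1)/2}|t_1-t_2|^{1/2}$ in the intrinsic parabolic distance. The hardest part will be the careful bookkeeping of the two alternatives so that all constants remain independent of $u$ beyond $\norm{u}_{\infty,E_T}$, and absorbing the inhomogeneous terms generated by the constant $C$ into the iteration using the smallness of $\rho$; this is the step where the intrinsic nature of the scaling must be respected uniformly through every iteration level.
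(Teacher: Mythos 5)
This theorem is not proved in the paper at all: the Appendix section opens by stating ``This section is taken from \cite[Chapter 3, Chapter 5, Theorem 15.1, Theorem 16.1]{dibenedetto2012harnack},'' so Theorem~\ref{thm:apx7} is imported as a black box and no argument is supplied. Your proposal is therefore not competing with an in-paper proof but with the cited literature. That said, your sketch correctly identifies the method that the cited reference actually uses: DiBenedetto's intrinsic-scaling scheme, with (i) truncated energy and logarithmic estimates obtained by testing against $\zeta^2(u-k)_\pm$ and a nonlinear function thereof, (ii) the two-alternatives dichotomy on intrinsic cylinders $B_\rho\times(t_0-\theta\rho^2,t_0)$ with $\theta\sim\omega^{1-m}$, and (iii) iteration of the resulting oscillation reduction across a nested family of intrinsic cylinders, which is what yields the intrinsic parabolic distance $\norm{x_1-x_2}+\|u\|_{\infty,E_T}^{(m-1)/2}|t_1-t_2|^{1/2}$ in the modulus. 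You also flag the two genuinely delicate points---carrying the inhomogeneous constant $C$ through the iteration, and keeping all constants dependent only on the data---which is where most of the work in the cited proof actually lies.

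The caveat is that your text is an outline rather than a proof: it asserts the oscillation-decay step rather than establishing it, and the two-alternatives bookkeeping (in particular the switch from the level $\mu^-+\omega/2^s$ to the top truncation $(u-(\mu^+-\omega/2^s))_+$ after the first alternative fails, and the use of the logarithmic estimate to propagate measure information forward in time over the full intrinsic time slab) is exactly the technical core that cannot be waved through. None of that constitutes an error on your part, but if you intended this as a self-contained proof it is incomplete in the same places where the original proof is long. For the purposes of this paper, the appropriate move is the one the author takes: cite \cite{dibenedetto2012harnack} and do not reprove the result.
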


\subsection{Schauder Estimate}
The following Schauder estimate is from \cite[Chapter 2, Theorem 6]{schauder-harvard}.

Let us consider a linear operator of the form
$L'=a^{i j}(x, t) D_i D_j+b^i(x, t) D_i+c(x, t)$, and define $Q_R=B_R(0) \times\left(-R^2, 0\right]$, then we have the following Schauder interior estimate.
\begin{theorem}\label{thm:apx8}
   Let $\delta \in(0,1), R>0$ and $k$ be a non-negative integer. Suppose $u(x, t) \in \mathcal{C}^{2+\delta, 1+\delta / 2}\left(Q_{2 R}\right)$ satisfies

$$
\left(\partial_t-L'\right) u=f
$$where $L$ is defined as above and $$
\begin{aligned}
& \left\|D^p a^{i j}\right\|_{\delta, \delta / 2},\left\|D^p b^i\right\|_{\delta, \delta / 2},\left\|D^p c\right\|_{\delta, \delta / 2} \leq \Lambda, \\
& \Lambda^{-1}|\xi|^2 \leq a^{i j} \xi_i \xi_j \leq \Lambda|\xi|^2, \quad \xi \in \mathbf{R}^d, \\
& D^p f \in \mathcal{C}^{\delta, \delta / 2}\left(Q_{2 R}\right),
\end{aligned}
$$
for all multi-index $|p| \leq k$.
Then $D^p u \in C^{2+\delta, 1+\delta / 2}\left(Q_R\right)$ and

$$
\begin{aligned}
& \left\|D^k u\right\|_{2+\delta, 1+\delta / 2 ; Q_R} \\
\leq & C\left(\sum_{|p| \leq k}\left\|D^p f\right\|_{\delta, \delta / 2 ; Q_{2 R}}+\|u\|_{L^{\infty}\left(Q_{2 R}\right)}\right)
\end{aligned}
$$where $C$ depends on $d, \delta, \Lambda, R$.
\end{theorem}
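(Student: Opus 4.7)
The plan is to reduce Proposition \ref{prop:refinal} to Theorem \ref{thm:4} via a homogeneity-based time/amplitude rescaling followed by a Fatou-type passage to the limit along the approximants used to construct solutions in $\Gamma$. No new functional inequality is required beyond the Poincaré inequality already invoked in Theorem \ref{thm:4}. First, I would introduce the rescaled solution
\begin{equation*}
\bar{\mu}(x,t) := \norm{\varsigma}^{-1}_{L^1(\R,\pi)}\, \mu\!\left(x,\ \norm{\varsigma}^{-\beta}_{L^1(\R,\pi)} t\right).
\end{equation*}
Because the operator $\mu \mapsto L\mu^{\beta+1}/(1+\beta)$ is homogeneous of degree $1+\beta$, a direct calculation shows that $\bar{\mu}$ still satisfies the same PDE with initial datum $\bar{\varsigma} = \varsigma/\norm{\varsigma}_{L^1(\R,\pi)}$ of unit $L^1(\R,\pi)$-mass. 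This normalization is precisely what Theorem \ref{thm:4} requires.

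Next, to apply Theorem \ref{thm:4} despite it being stated only for the Cauchy--Dirichlet problem with smooth, strictly positive, bounded data on $B_R(0)$, I would invoke the construction of $\mu$ from Proposition \ref{prop:re24}: $\mu$ arises as a monotone (decreasing) pointwise limit of solutions $\mu_i$ corresponding to initial data $\varsigma + 2^{-i}$, each of which is itself (through Propositions \ref{prop:re23} and \ref{prop:re19}) a pointwise limit of Cauchy--Dirichlet solutions $\mu_{i,R_j}$ on $B_{R_j}(0)$ with smooth strictly positive bounded initial data as provided by Proposition \ref{prop:re11}. Rescaling each $\mu_{i,R_j}$ by the same formula as $\bar{\mu}$ (with the appropriate normalizing $L^1$-mass of the corresponding cut-off initial datum) yields $\bar{\mu}_{i,R_j}$, to which Theorem \ref{thm:4} applies directly, giving the two-regime bound on $K_p(\bar{\mu}_{i,R_j}(t))$. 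Since $\bar{\mu}_{i,R_j}(x,t) \to \bar{\mu}(x,t)$ pointwise along the diagonal, Fatou's lemma applied to $\int \bar{\mu}^p d\pi$ transfers the bound to $\bar{\mu}$:
\begin{equation*}
K_p(\bar{\mu}(t)) \leq \liminf_{i,j} K_p(\bar{\mu}_{i,R_j}(t)).
\end{equation*}

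The last step is algebraic: I would translate $K_p$ bounds on $\bar{\mu}$ into $\Renyi$ bounds on $\mu$ using
\begin{equation*}
K_p(\bar{\mu}(t)) \;=\; \frac{p}{p-1}\,\Renyi\!\left(\mu\!\left(\norm{\varsigma}^{-\beta}_{L^1(\R,\pi)} t\right)\right),\qquad K_p(\bar{\varsigma}) \;=\; \frac{p}{p-1}\,\Renyi(\varsigma),
\end{equation*}
which follow from a direct calculation with the definitions. The threshold $K_p(\bar{\mu})\leq 1$ then becomes $\Renyi(\mu)\leq (p-1)/p$, aligning the definition of $t_0$ in the proposition with that in Theorem \ref{thm:4} after reparameterization. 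Substituting into \eqref{eq:15} and changing time variable $s = \norm{\varsigma}^{-\beta}_{L^1(\R,\pi)} t$ produces the coefficients $2\beta\lambda(p-1)\norm{\varsigma}^\beta_{L^1(\R,\pi)}/(p+\beta)^2$ inside the logarithm and the exponential rate $2p\lambda \norm{\varsigma}^\beta_{L^1(\R,\pi)}/(p+\beta)^2$ exactly as stated.

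The main obstacle I anticipate is bookkeeping the limit passage cleanly: Theorem \ref{thm:4} was proved on balls with $L^1$-normalized initial data, and the approximants $\bar{\mu}_{i,R_j}$ do not have unit mass in $B_{R_j}(0)$ unless one renormalizes the truncated initial data (as is done in Proposition \ref{prop:re19} via the cut-off $1-S_i$). I would therefore apply Theorem \ref{thm:4} to the normalized versions and verify that the normalization constants converge to $1$ as $R_j,i\to\infty$, so that the limit $\bar{\mu}$ inherits the correct inequality with $\norm{\varsigma}_{L^1(\R,\pi)}$-dependent constants. A secondary care is needed for $\Renyi(\varsigma) = +\infty$: the term $e^{-\beta \Renyi(\varsigma)}$ vanishes, and one must check that the super-exponential bound remains well-defined (which it does, since $t>0$ keeps the argument of the log positive).
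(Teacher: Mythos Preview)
Your proposal addresses the wrong statement. The statement you were asked to prove is Theorem \ref{thm:apx8}, the Schauder interior estimate for linear parabolic equations of the form $(\partial_t - L')u = f$ on $Q_{2R}$. Instead, your entire write-up is a proof sketch for Proposition \ref{prop:refinal}, the large-time asymptotics of $\Renyi(\mu(t))$ for the nonlinear porous media equation. These are completely unrelated results: one is a classical a priori regularity estimate for a \emph{linear} equation with H\"older coefficients (which the paper simply quotes from an external reference and does not prove at all), the other is a decay rate for a specific nonlinear evolution. None of the rescaling $\bar\mu$, the Fatou passage along Cauchy--Dirichlet approximants, or the $K_p/\Renyi$ bookkeeping has any bearing on Theorem \ref{thm:apx8}.

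For the statement actually in question, the paper offers no proof; it is cited as \cite[Chapter 2, Theorem 6]{schauder-harvard}. A genuine attempt would follow the standard Schauder machinery: establish the estimate for the constant-coefficient heat operator via potential estimates, use a freezing-of-coefficients and perturbation argument together with interpolation inequalities to handle variable $a^{ij}, b^i, c$, obtain the $k=0$ case, and then bootstrap to higher $k$ by differentiating the equation (the hypotheses on $D^p a^{ij}$, $D^p b^i$, $D^p c$, $D^p f$ for $|p|\le k$ are there precisely to make this bootstrap work).
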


\subsection{Local Boundness of Elliptic Equation}
The following theorem is adapted from \cite[Chapter 4, Theorem 4.1]{han2011elliptic}.

\begin{theorem}\label{thm:apx9}
    Suppose that $u \in H^1\left(B_1(x_0)\right)$ is a subsolution in the following sense:
\begin{equation}\label{eq:re21}
    \int_{B_1(x_0)} \inner{\nabla u}{\nabla\varphi}d\pi(x) \leq \int_{B_1(x_0)} g \varphi d\pi(x)
\end{equation} for any $\varphi \in H_0^1\left(B_1(x_0)\right)$ and $\varphi \geq 0$ in $B_1(x_0)$.
If $g \in L^q\left(B_1(x_0)\right)$, then $u^{+}:=\max\{u,0\} \in L_{\mathrm{loc}}^{\infty}\left(B_1(x_0)\right)$. Moreover, there holds for any $p>0,q\geq d/2$
$$
\sup _{B_{1/2}(x_0)} u^{+} \leq C\left\{\left\|u^{+}\right\|_{L^p\left(B_1(x_0),\pi\right)}+\|g\|_{L^q\left(B_1(x_0),\pi\right)}\right\}
$$where $C=C(d, p, q,\max_{x\in B_1(x_0)}V(x),\min_{x\in B_1(x_0)}V(x))$ is a positive constant.
\end{theorem}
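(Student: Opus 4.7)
The proof plan is the classical Moser iteration, adapted to the weighted measure $d\pi = e^{-V}dx$. The first observation is that on the compact set $\overline{B_1(x_0)}$, the weight $e^{-V}$ is bounded above and below by positive constants depending only on $\max_{B_1(x_0)} V$ and $\min_{B_1(x_0)} V$, so $L^p(B_1(x_0),\pi)$ and the unweighted $L^p(B_1(x_0))$ are equivalent with comparable norms, and the standard Sobolev inequality on $B_1(x_0)$ transfers to the weighted setting up to those same constants.

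The core step is a Caccioppoli-type inequality. For $\beta \ge 1$ and a smooth cutoff $\eta$ with $0 \le \eta \le 1$, $\eta \equiv 1$ on $B_{r'}(x_0)$ and $\operatorname{supp}\eta \subset B_r(x_0)$ for $\tfrac12 \le r' < r \le 1$, I would test \eqref{eq:re21} with $\varphi = \eta^2 (u^+)^{2\beta-1}$. Expanding $\nabla\varphi$ and applying Cauchy--Schwarz with a small absorbing parameter on the cross term $\eta(u^+)^{2\beta-1}\nabla u^+ \cdot \nabla\eta$, I obtain
\begin{equation*}
\int \eta^2 \bigl|\nabla (u^+)^{\beta}\bigr|^2 d\pi \;\le\; C\beta^2 \int |\nabla\eta|^2 (u^+)^{2\beta} d\pi \;+\; C\beta \int |g|\, \eta^2 (u^+)^{2\beta-1}\, d\pi.
\end{equation*}
The last term is handled by Hölder's inequality in the form $|g|(u^+)^{2\beta-1} \le \tfrac{1}{2\beta}|g|^{2\beta} + \tfrac{2\beta-1}{2\beta}(u^+)^{2\beta}$ after an initial Young step, producing a controlled right-hand side involving $\|u^+\|_{L^{2\beta}(B_r,\pi)}^{2\beta}$ and $\|g\|_{L^q(B_1,\pi)}$ (here is where $q \ge d/2$ enters, so that the $L^q$ norm of $g$ combines cleanly with the Sobolev exponent).

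Next I would combine this Caccioppoli estimate with the Sobolev inequality applied to $\eta (u^+)^{\beta}$, yielding
\begin{equation*}
\|u^+\|_{L^{2\beta\gamma}(B_{r'},\pi)} \;\le\; \bigl[C\beta (r-r')^{-1}\bigr]^{1/\beta} \Bigl(\|u^+\|_{L^{2\beta}(B_r,\pi)} + \|g\|_{L^q(B_1,\pi)}\Bigr),
\end{equation*}
where $\gamma = d/(d-2) > 1$ (with the obvious modification if $d \le 2$). Iterating with $\beta_k = \gamma^k$ and a geometrically shrinking sequence $r_k \downarrow 1/2$, the product of the constants $[C\beta_k 2^k]^{1/\beta_k}$ converges, and I obtain
\begin{equation*}
\sup_{B_{1/2}(x_0)} u^+ \;\le\; C\Bigl(\|u^+\|_{L^2(B_1(x_0),\pi)} + \|g\|_{L^q(B_1(x_0),\pi)}\Bigr).
\end{equation*}

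The final step is the upgrade from $p=2$ to arbitrary $p>0$. Running the Moser chain starting from radius $r$ rather than $1$ yields a family of bounds of the form $\sup_{B_{r'}} u^+ \le C(r-r')^{-d/2}\|u^+\|_{L^2(B_r,\pi)} + C\|g\|_{L^q(B_1,\pi)}$ for all $1/2 \le r' < r \le 1$. Writing $\|u^+\|_{L^2}^2 \le \bigl(\sup u^+\bigr)^{2-p}\|u^+\|_{L^p}^p$ for $0<p<2$ and applying the standard iteration lemma (for a non-negative non-decreasing function $\phi$ on $[1/2,1]$ satisfying $\phi(r') \le \theta \phi(r) + A(r-r')^{-\alpha} + B$ with $\theta<1$, one has $\phi(1/2)\le C(A+B)$), Young's inequality absorbs the $\sup u^+$ factor into the left-hand side and produces the claimed estimate with any $p>0$. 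The main delicate point is the bookkeeping in this last absorption argument to ensure that the constant depends only on $d$, $p$, $q$, and the bounds on $V$, and does not degenerate as $p\downarrow 0$.
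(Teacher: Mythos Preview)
Your approach is correct in spirit but differs substantially from the paper's. The paper does not reprove anything: it simply observes that the weighted subsolution inequality \eqref{eq:re21} is, after multiplying through by the constant $e^{\max_{B_1(x_0)}V}$, exactly the unweighted divergence-form inequality $\int a^{ij}D_i u\, D_j\varphi \le \int f\varphi$ with $a^{ij}=e^{-[V-\max V]}\delta_{ij}$ and $f=ge^{-[V-\max V]}$, for which the ellipticity ratio is $\Lambda=e^{\max V-\min V}$; it then quotes \cite[Chapter~4, Theorem~4.1]{han2011elliptic} verbatim and converts the resulting unweighted $L^p$ norms back to $\pi$-weighted ones at the cost of another factor depending on $\max V$. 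Your Moser iteration is precisely the argument that proves that cited theorem, so you are reproducing its proof rather than invoking it. The paper's route is shorter; yours is self-contained.

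One point in your sketch does need repair. Your proposed Young inequality $|g|(u^+)^{2\beta-1}\le \tfrac{1}{2\beta}|g|^{2\beta}+\tfrac{2\beta-1}{2\beta}(u^+)^{2\beta}$ cannot be integrated: it would require $g\in L^{2\beta}$ for every $\beta$, i.e.\ $g\in L^\infty$, which is far stronger than the hypothesis $g\in L^q$ with $q\ge d/2$. The standard fix (and this is where $q\ge d/2$ is actually used) is to set $\bar u=u^++k$ with $k=\|g\|_{L^q}$, test with $\eta^2\bar u^{2\beta-1}$, and estimate $\int|g|\eta^2\bar u^{2\beta-1}\le\|g\|_{L^q}\|\eta^2\bar u^{2\beta-1}\|_{L^{q'}}\le\|\eta^2\bar u^{2\beta}\|_{L^{q'}}$ (using $k\le\bar u$); since $q'\le d/(d-2)=2^*/2$, this interpolates between $\|\eta\bar u^{\beta}\|_{L^2}^2$ and $\|\eta\bar u^{\beta}\|_{L^{2^*}}^2$, and the latter is absorbed into the Sobolev term. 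With that correction your outline goes through.
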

\begin{proof}
This is just an application of \cite[Chapter 4, Theorem 4.1]{han2011elliptic}. Let $a_{ij}(x)=e^{-[V(x)-\max_{B_1(x_0)}V]}\mathrm{I}_d, c=0$ and $f=ge^{-[V(x)-\max_{B_1(x_0)}V]}$, then the constant in \cite[Chapter 4, Theorem 4.1]{han2011elliptic} will be $\lambda=1,\Lambda=e^{[\max_{B_1(x_0)}V-\min_{B_1(x_0)}V]}$. Use \cite[Chapter 4, Theorem 4.1]{han2011elliptic}, we directly have
\begin{equation}
    \begin{aligned}
        \sup _{B_{1/2}(x_0)} u^{+} &\leq C(d,p,q)\Lambda^{c\frac{d}{p}}\left\{\left\|u^{+}\right\|_{L^p\left(B_1(x_0)\right)}+\|g\|_{L^q\left(B_1(x_0)\right)}\right\}\\
    & \leq C(d,p,q)\Lambda^{c\frac{d}{p}}e^{\frac{\max_{B_1(x_0)}V}{p}}\left\{\left\|u^{+}\right\|_{L^p\left(B_1(x_0),\pi\right)}+\|g\|_{L^q\left(B_1(x_0),\pi\right)}\right\}
    \end{aligned}
\end{equation}
here $c$ is a universal constant independent of $d,p,q$.
\end{proof}

\end{document}